%
%
%
%
%
%


\documentclass[a4paper, 11pt]{amsart}


\usepackage{latexsym}
\usepackage{amssymb}
\usepackage{amsmath}
\usepackage{amsthm}
\usepackage{amsfonts}
\usepackage{color}
\usepackage{enumitem}

\usepackage[all]{xy}


\usepackage{graphicx}
\usepackage{subfigure}

\usepackage{psfrag}
\usepackage{hyperref}
\usepackage{comment}





\newtheorem{thm}{Theorem}[section]
\newtheorem{cor}[thm]{Corollary}
\newtheorem{lem}[thm]{Lemma}
\newtheorem{prop}[thm]{Proposition}

\newtheorem{theorem}{Theorem}
\newtheorem{corollary}[theorem]{Corollary}


\theoremstyle{definition}
\newtheorem{defn}[thm]{Definition}

\newtheorem{rem}[thm]{Remark}


\numberwithin{equation}{section}


\newcommand{\scal}[1]{\langle #1 \rangle}

\newcommand{\RR}{\mathbb{R}}
\newcommand{\PP}{\mathbb{P}}
\newcommand{\QQ}{\mathbb{Q}}
\newcommand{\HH}{\mathbb{H}}
\newcommand{\CC}{\mathbb{C}}

\newcommand{\ZZ}{\mathbb{Z}}


\newcommand{\SO}{\mathrm{SO}}
\newcommand{\OO}{\mathrm{O}}
\newcommand{\U}{\mathrm{U}}

\newcommand{\Sp}{\mathsf{Sp}}
\renewcommand{\sp}{\mathfrak{sp}}

\newcommand{\GL}{\mathrm{GL}}
\newcommand{\Sym}{\mathrm{Sym}}
\newcommand{\Map}{\mathrm{Map}}

\newcommand{\diag}{\mathrm{diag}}

\newcommand{\Id}{\mathrm{Id}}
\newcommand{\sgn}{\operatorname{sgn}}
\newcommand{\inter}[1]{\mathrm{Int}\left(#1\right)}

\def\ol{\overline}

\newcommand{\lra}{\longrightarrow}
\newcommand{\lmt}{\longmapsto}
\newcommand{\ra}{\rightarrow}

\newcommand{\In}{\subseteq}

\DeclareMathOperator{\ind}{ind}

\newcommand{\sphere}{\mathrm{\mathbb{S}}}

\newcommand{\vf}{\mathfrak{X}}

\newcommand{\lp}{\Lambda}
\newcommand{\N}{\mathsf{N}}
\newcommand{\Cyl}{\mathcal{C}}
\newcommand{\E}{\mathcal{E}}
\renewcommand{\null}{\mathrm{null}}

\setcounter{tocdepth}{1}


\begin{document}



\title[On manifolds all of whose geodesics are closed]{On the Berger conjecture
for manifolds all of whose geodesics are closed }


\author{Marco Radeschi}

\author{Burkhard Wilking}
\date{\today}


\subjclass[2000]{}
\keywords{}


\begin{abstract} We define a Besse manifold as a Riemannian manifold $(M,g)$ 
all of whose geodesics are closed. A conjecture of Berger states that
  all prime geodesics have the same length for any simply connected Besse manifold.
 We firstly show that the energy function on the free loop space of a simply connected Besse manifold 
 is a perfect Morse-Bott function  with respect to a suitable cohomology.
 Secondly we explain when the negative bundles along the critical manifolds are orientable. 
 These two general results, then lead to 
 a solution of Berger's conjecture when the underlying manifold 
 is a sphere of dimension at least four.
 \end{abstract}

\maketitle



\tableofcontents

Riemannian manifolds with all geodesics closed have been object of study since the beginning of the XX century,
when the first nontrivial examples were produced by Tannery and Zoll. 
To this day the famous book of Besse \cite{Be} describes 
the state of art to a large extent with a few but notable exceptions. 
We define a Besse metric $g$ on a manifold $M$, as a Riemannian metric all of whose geodesics 
are closed and a Besse manifold as a manifold endowed with a Besse metric.

The ``trivial'' examples of Besse manifolds  are the compact rank one symmetric spaces
(also called CROSSes), with their canonical metrics: round spheres $\sphere^n$, complex and quaternionic
projective spaces $\CC\PP^n$, $\HH\PP^n$ with their Fubini-Study  metric, and the Cayley projective
plane $Ca\PP^2$. To this day, these are also the only manifolds that are known to admit a Besse metric.
Moreover, on the projective spaces the canonical metric is the only known Besse metric.
On the other hand, in the case of spheres many other Besse metrics have been discovered by Zoll,
Berger, Funk, and Weinstein.

Given a Besse  manifold, a theorem of Wadsley states that all the prime geodesics
(i.e. which are not iterates of a shorter one) have a common period $L$. Using this result, a
combination of theorems by Bott-Samelson \cite[Thm 7.37]{Be} and McCleary \cite[Cor. A]{McC} proves
that any simply connected Besse manifold $M$ has the integral cohomology ring of a CROSS, 
the so called {\em model} of $M$.

A conjecture of Berger states that on a simply connected Besse manifold all
the prime geodesics have the \emph{same} length $L$. This strengthens Wadsley's result in the simply connected case.
If all prime geodesics have the same length one knows, for example, that the volume of $M$ is an integer multiple
of the volume of a round sphere of radius $L/2\pi$ \cite{We0}.

This conjecture was proved for 2 spheres by Gromoll and Grove \cite{GG}. Partial results in dimension three have been obtained by Olsen in \cite{Ols}. The main goal of this paper is to
prove the conjecture for all topological spheres of dimension at least 4.

\begin{theorem}\label{T:main}
Let $M$ be a topological sphere of dimension $\geq4$. Then, for every Besse metric on $M$,
all prime geodesics have the same length.
\end{theorem}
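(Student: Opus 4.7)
The approach is by contradiction. Suppose that on $(M,g) = (\sphere^n, g)$ with $n\geq 4$ there is a prime geodesic of length $\ell < L$, where $L$ is the common period provided by Wadsley's theorem. Then $\ell = L/p$ for an integer $p \geq 2$. The idea is to compute the Morse polynomial of the energy functional on the free loop space $\Lambda M$ in two different ways and derive a contradiction.

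First I would apply the two general tools announced in the abstract. The energy functional $E\colon\Lambda M \to \RR$ is perfect Morse--Bott with respect to a suitable cohomology, and the negative bundles along all its critical submanifolds are orientable (this is where the hypothesis $n\geq 4$ enters, together with the simple connectedness of $\sphere^n$). Combined, these yield an exact identity
\begin{equation*}
P_t(\Lambda M) \;=\; P_t(M) \;+\; \sum_{\N} t^{\ind(\N)}\,P_t(\N),
\end{equation*}
where the sum ranges over the critical submanifolds $\N$ of nonconstant closed geodesics. Since $M$ has the cohomology ring of $\sphere^n$ (by Bott--Samelson and McCleary), the left-hand side is known explicitly from Ziller's computation of $P_t(\Lambda\sphere^n)$. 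Moreover, in the ``model'' case where every prime geodesic has length exactly $L$, the critical submanifolds are the standard $S^1$-orbits of iterates, their indices follow from the classical CROSS index formulas, and their contributions saturate the right-hand side term by term.

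Consequently, under the contradiction hypothesis, the additional critical submanifolds generated by a prime $\gamma$ of length $L/p$---namely, for each $k$ not divisible by $p$ the $S^1$-orbits of the iterates $\gamma^k$, and for each multiple of $p$ an extra connected component attached to the existing length-$kL$ critical submanifold---must contribute a \emph{zero} net polynomial to the right-hand side. The heart of the argument is to show that these extra contributions form a nontrivial polynomial in $t$ with nonnegative coefficients, yielding a strict inequality and thus the desired contradiction.

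The main obstacle is precisely this last step. I would need to compute the Morse index and the rational/suitable homotopy type of each extra critical submanifold, using the Besse condition to control the spectrum of the linearized Poincaré map along $\gamma$, and then rule out any cancellation against an adjustment of the model terms. A secondary difficulty is the verification of the orientability statement itself: the restriction $n\geq 4$ is essential, since in low dimensions the parity of iterate indices can obstruct the negative bundle's orientability---consistent with the fact that the cases $n=2,3$ are handled by entirely different methods (Gromoll--Grove, Olsen).
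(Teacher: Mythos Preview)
Your outline has the right opening moves---contradiction, perfectness, orientability---but several structural misconceptions prevent it from reaching the paper's argument, and the ``main obstacle'' you identify is not the one that actually arises.

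First, the exceptional closed geodesics do \emph{not} come as isolated $S^1$-orbits. The set of prime geodesics of length $L/p$ is the fixed-point set of $\ZZ_p\subset S^1$ acting on $T^1M$ via the geodesic flow; this is a closed submanifold $C\subset T^1M$ of some odd dimension $\leq 2n-3$, typically not one-dimensional. So the ``extra'' critical manifolds are not things whose Poincar\'e polynomials you can read off a priori; determining $H^*(C)$ is exactly the problem. Second, the perfectness statement in the paper is for the \emph{$S^1$-equivariant rational} cohomology of the pair $(\Lambda M,M)$, not for ordinary $P_t(\Lambda M)$; the Ziller-type identity you wrote is not what is available. Third, the restriction $n\geq 4$ has nothing to do with orientability of the negative bundle: spheres are spin in every dimension, so orientability is automatic. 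The dimension hypothesis enters only at the very end, in a topological counting step.

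The paper's actual route after setting up perfectness is quite different from a polynomial-balancing argument. One shows the minimal-index exceptional manifold $C$ has $\ind(C)=n-1$ and that $C/S^1$ is a symplectic manifold, hence $\dim H^{2q}_{S^1}(C;\QQ)\geq 1$ in the relevant range. Perfectness, together with the explicit $H^*_{S^1}(\Lambda S^n,S^n;\QQ)$, then forces these Betti numbers to be \emph{exactly} $1$ and shows that no other critical set contributes in low degrees. One now switches to \emph{integral ordinary} cohomology: since the other critical sets have index above $(n-1)+\dim C-1$, the groups $H^q(C;\ZZ)$ coincide with $H^{q+n-1}(\Lambda S^n,S^n;\ZZ)$ for $q\leq\dim C-1$, which for $n\geq 4$ covers more than half the range and (via Poincar\'e duality) pins $C$ down as an integral cohomology sphere. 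The contradiction is then Smith-theoretic: $\ZZ_2\times\ZZ_2\subset\OO(2)/\ZZ_m$ acts freely on $C$, impossible on a cohomology sphere. None of these steps---the symplectic lower bound, the equivariant-to-integral switch, or the Smith theory punchline---appears in your plan, and they cannot be replaced by the saturation argument you sketch.
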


To prove Theorem \ref{T:main}, we study the free loop space $\lp M
=\mathcal{H}^1(\sphere^1,M)$
(the Sobolev space of maps $\sphere^1\to M$) of any Besse manifold.
In particular, we study the energy functional $E\colon \lp M\to \RR$, 
which associates to any free loop $\gamma\colon S^1\to M$ the energy
$$E(\gamma)=\int_{S^1}\|\gamma'(t)\|^2\textrm{dt}.$$ 
The basic idea is to  combine the simplicity of the topology of the 
the free loop space of $M$ with Morse theory in order to show that 
the critical levels of $E$ must be simple. To that end we prove two general results 
which hold on all orientable Besse manifolds.
The second named author proved in \cite{Wil} that $E$ is a Morse-Bott function.
In particular, the critical points of $E$, which precisely correspond to the closed geodesics in $M$,
form smooth ``critical'' submanifolds. Moreover, along every critical submanifold $C$ the negative eigenspaces
of the Hessian of $E$ give rise to a ``negative bundle'' $\N\to C$, of finite rank.

\begin{theorem}\label{T:orientability}
Let $M$ be an orientable Besse manifold, $C\In \lp M$  a critical submanifold 
of the energy functional, and $A\colon C\to \SO(n)$ denote the map sending a closed geodesic $\gamma$
to the holonomy map $A_\gamma\in \SO(T_{\gamma(0)}M)\cong \SO(n)$ along $\gamma$.
Then for any curve $c:\sphere^1\to C$ the negative
bundle over $c$ is orientable if and only if $A\circ c$ is nullhomotopic in $\SO(n)$.
\end{theorem}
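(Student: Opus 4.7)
The plan is to establish the identity $w_1(c^*\N)=[A\circ c]\in H^1(\sphere^1;\ZZ/2)\cong \pi_1(\SO(n))$ (for $n\geq 3$; the case $n=2$ uses the mod-$2$ reduction of $\pi_1(\SO(2))=\ZZ$). This single identity immediately gives both directions of the theorem.

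First I would set up an explicit analytic model for the Hessian along $c$. Since $M$ is orientable and $\SO(n)$ is connected, the pullback frame bundle over $\sphere^1$ via $s\mapsto\gamma_s(0)$ is trivial, so I can choose a continuous section $s\mapsto E^s(0)$ of orthonormal frames at $\gamma_s(0)$ and parallel-transport to obtain continuous parallel orthonormal frames $E^s(t)$ along each $\gamma_s=c(s)$. The closing defect $E^s(L)=A_{\gamma_s}\cdot E^s(0)$ realizes $A\circ c$ as the loop $s\mapsto A_{\gamma_s}\in \SO(n)$. Tangent vectors to $\lp M$ at $\gamma_s$ become $\mathcal H^1$-functions $\vec x\colon [0,L]\to\RR^n$ with the quasi-periodic boundary condition $\vec x(L)=A_{\gamma_s}^{-1}\vec x(0)$, and the Hessian takes the form
\[
H_s(\vec x)=\int_0^L\bigl(|\vec x'(t)|^2-\vec x(t)^T K_s(t)\vec x(t)\bigr)\,dt
\]
with $K_s(t)$ the tidal curvature $R(\,\cdot\,,\gamma_s')\gamma_s'$ read in the parallel frame.

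Next, I would encode $[A\circ c]$ as a global topological datum of the $2$-parameter map $F\colon \sphere^1\times\sphere^1\to M$, $(s,t)\mapsto\gamma_s(t)$: the pullback $F^*TM$ is an orientable rank-$n$ bundle over $T^2$ whose restriction to each $\{s\}\times\sphere^1$ has holonomy $A_{\gamma_s}$ and whose restriction to each horizontal circle is trivialised by our chosen frame, so by clutching its class is exactly $[A\circ c]\in \pi_1(\SO(n))$. The heart of the argument is then to identify this class with $w_1(c^*\N)$, i.e.\ to compute by how much the orientation of the finite-dimensional negative eigenspace of $H_s$ flips as $s$ loops around $\sphere^1$.

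The hard part is precisely this last computation. A direct spectral approach would simultaneously diagonalize $A_{\gamma_s}$ into $\SO(2)$-blocks (possible away from a codimension-one locus of ``collisions''), expand $\vec x$ in twisted Fourier modes $e^{i(2\pi k\pm\theta_j(s))t/L}$, and track the parity of eigenmodes whose eigenvalue crosses zero as $s$ traverses $c$: the winding of the angles $\theta_j(s)$ is exactly what $[A\circ c]$ detects, while each eigenvalue crossing zero flips the orientation of $\N_s$. The delicate points are the collision loci, where the block decomposition fails, and the interaction of the curvature potential $K_s$ with the crossing count. I expect the cleanest resolution to be a Maslov-type index argument for the path of symplectic linear maps defined by the Jacobi equation along each $\gamma_s$: one expresses both $w_1(c^*\N)$ and $[A\circ c]$ as the same mod-$2$ winding of a Lagrangian reference around $c$, thereby avoiding any non-generic block collisions.
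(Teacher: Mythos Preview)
Your setup is correct and you have identified the right target identity $w_1(c^*\N)=[A\circ c]$, but the proposal stops at exactly the point you yourself label the ``hard part'': you do not actually carry out the computation linking the mod-2 orientation drift of $\N_s$ to $[A\circ c]$. Diagonalising $A_{\gamma_s}$ into $\SO(2)$-blocks and expanding in twisted Fourier modes fails, as you note, once the curvature $K_s$ mixes the blocks; and the suggested ``Maslov-type index argument'' is only a name, not an argument---you give no mechanism forcing the two mod-2 invariants to coincide. The intermediate paragraph on $F^*TM$ over $T^2$ encodes $[A\circ c]$ as a clutching class but establishes no relation to the negative bundle $c^*\N$, which lives over $\sphere^1$, not $T^2$. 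So as written this is a plan, not a proof.

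The paper's argument supplies precisely the missing mechanism, via a different route. Rather than a direct spectral-flow computation, it deforms the data $(R_s,A_s)$ through a two-parameter family over the cylinder $\sphere^1\times[0,1]$ to trivial data at $\tau=1$ (using the assumed null-homotopy of $A$), and tracks orientability across the cylinder. The crucial device is to augment $\N$ by the sum $\E$ of eigenspaces of the Poincar\'e map with real eigenvalue in $(0,1)$: each time an eigenvalue of the Hessian crosses zero, the Poincar\'e map simultaneously acquires eigenvalue $1$, and a one-dimensional piece migrates between $\N$ and $\E$ in a way that carries a canonical orientation on the sum. This is where the index-parity result of \cite{Wil} enters, and it is the concrete substitute for your unperformed Maslov calculation. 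After a genericity perturbation of the Poincar\'e map, local orientability of $\N\oplus\E$ is reduced to a push-forward lemma for line bundles. The non-null-homotopic direction is handled separately by an explicit rank-two construction whose negative bundle is visibly non-orientable. If you want to salvage your approach, you must supply an analogue of this $\N\oplus\E$ bookkeeping: an explicit reason why each zero-crossing of the Hessian is tied, mod $2$, to the winding of the holonomy.
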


The proof of Theorem~\ref{T:orientability} allows in a way 
to use the Poincare map to define a natural orientation on the negative bundle. 
One might hope to use similar ideas in other contexts,
e.g. to define (under suitable assumptions) a natural orientation on the unstable manifold
along a periodic Reeb orbit.
If $M$ is a spin manifold, the holonomy $A\colon C\to \SO(n)$ always lifts to a map
$\tilde{A}\colon C\to \textrm{Spin}(n)$, and in particular $A$ is always contractible. It follows in particular:

\begin{corollary}\label{C:orientability-spin}
Let $M$ be an orientable spin Besse manifold. Then for every critical submanifold $C\In \lp M$ 
for the energy functional, the negative bundle is orientable.
\end{corollary}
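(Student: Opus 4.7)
The approach is to combine Theorem~\ref{T:orientability} with the existence of a spin structure on $M$. By the theorem, orientability of the negative bundle $\N \to C$ is controlled by the homotopy classes of the loops $A \circ c : \sphere^1 \to \SO(n)$. Since orientability of a real vector bundle is detected by $w_1 \in H^1(C; \ZZ/2)$, hence by restriction to loops, it suffices to show that for every loop $c$ in $C$, the composition $A \circ c$ is nullhomotopic in $\SO(n)$.

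Fix such a loop $c$ and let $\alpha(t) := c(t)(0)$, a loop in $M$. The pullback bundle $\alpha^* P_{\SO}(M) \to \sphere^1$ is a principal $\SO(n)$-bundle over $\sphere^1$ and hence trivial, so it admits a continuous section $e(t)$; in this frame the holonomy of $c(t)$ is expressed as $A_{c(t)} \in \SO(n)$, making $A \circ c$ continuous. Since $M$ is spin, the corresponding pullback $\alpha^* P_{\textrm{Spin}}(M) \to \sphere^1$ is also trivial, and we may choose a continuous section $\tilde e(t)$ covering $e(t)$. The Levi--Civita connection on $P_{\SO}(M)$ lifts canonically to a connection on $P_{\textrm{Spin}}(M)$ via the local diffeomorphism $P_{\textrm{Spin}}(M) \to P_{\SO}(M)$, so parallel transport in the spin bundle around $c(t)$, expressed in the frame $\tilde e(t)$, yields a continuous map $\tilde A \circ c : \sphere^1 \to \textrm{Spin}(n)$ projecting under $\textrm{Spin}(n) \to \SO(n)$ to $A \circ c$.

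For $n \geq 3$, $\textrm{Spin}(n)$ is simply connected, so $\tilde A \circ c$ is nullhomotopic, and therefore so is $A \circ c$. Applying Theorem~\ref{T:orientability} then gives that the restriction of the negative bundle to every loop in $C$ is orientable, so $w_1(\N) \in H^1(C; \ZZ/2)$ vanishes and $\N \to C$ is orientable. The argument is essentially formal; the one subtle point is that the map $A$ depends on frame choices and is only well defined up to conjugation, but $\pi_1(\SO(n))$ being abelian ensures that the homotopy class $[A \circ c]$ is independent of those choices, so the whole construction goes through unambiguously.
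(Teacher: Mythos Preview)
Your proof is correct and follows exactly the approach the paper indicates: use the spin structure to lift the holonomy map $A$ to $\textrm{Spin}(n)$, then use simple connectivity of $\textrm{Spin}(n)$ (for $n\ge 3$) to conclude that $A\circ c$ is nullhomotopic for every loop $c$, and apply Theorem~\ref{T:orientability}. The paper compresses this into the single sentence preceding the corollary, while you spell out the construction of the lift via parallel transport in $P_{\textrm{Spin}}(M)$ and address the frame-dependence of $A$; these details are genuine but routine, and your treatment of them is accurate.
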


Almost all Besse manifolds are spin: in fact,
this is the case if the model of $M$ is not given by $\CC\PP^{2m}$. 
Since $M$ has the same integral cohomology as its model 
this follows either from  $H^2(M;\ZZ_2)=0$ or in case of the model $\CC\PP^{2m+1}$ from 
the fact that then $M$ is homotopy equivalent to its model and 
the homotopy invariance of Stiefel-Whitney classes 
$w_2(M)=w_2(\CC\PP^{2m+1})=0$.

Recall that $\OO(2)$ acts on $\lp M$ by reparametrization, and
the energy functional is $\OO(2)$-invariant. In particular,
we can use $E$ as an $\sphere^1$-equivariant Morse-Bott function, with $\sphere^1=\SO(2)\In\OO(2)$.
Let $M\to \lp M$ denote the embedding sending $p\in M$ to the constant curve $\gamma\equiv p$, and
denote the image of such embedding again by $M$.

\begin{theorem}\label{T:perfectness}
Let $M$ be a simply connected Besse manifold. Then the energy functional is a perfect Morse-Bott function
for the rational, $\sphere^1$-equivariant cohomology of the pair $(\lp M, M)$.
\end{theorem}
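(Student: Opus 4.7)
The plan is to verify that the $\sphere^1$-equivariant Morse-Bott inequalities attached to $E$ on the pair $(\lp M,M)$ are equalities. By \cite{Wil}, $E$ is an $\sphere^1$-invariant Morse-Bott function; its critical set decomposes as $M\sqcup K$ with $K=\bigsqcup_\alpha C_\alpha$ a countable disjoint union of $\sphere^1$-invariant closed submanifolds of non-constant closed geodesics, and the $\sphere^1$-action on $\lp M$ fixes exactly $M$. Filtering $\lp M\setminus M$ by sublevel sets of $E$ produces a spectral sequence whose $E_1$-page is $\bigoplus_\alpha H^*_{\sphere^1}(D(\N_\alpha),S(\N_\alpha);\QQ)$ and which converges to $H^*_{\sphere^1}(\lp M,M;\QQ)$. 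Perfectness is the statement $E_1=E_\infty$, equivalently
$$P^{\sphere^1}(\lp M,M;t)\;=\;\sum_\alpha P^{\sphere^1}(D(\N_\alpha),S(\N_\alpha);t);$$
one direction is always the Morse-Bott inequality, so it suffices to compute both series and verify they agree.

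For the left-hand side I would use that $M$ is simply connected with the integer cohomology ring of its CROSS model, hence rationally equivalent to the model. Since the equivariant Poincaré series of $(\lp M,M)$ depends only on the rational homotopy type of $M$, it coincides with the corresponding series for the model carrying its canonical symmetric metric; that series can be computed from the Borel fibration $E\sphere^1\times_{\sphere^1}\lp M\to B\sphere^1$ using the explicit cohomology ring of the loop space of the CROSS, which is classical.

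For the right-hand side, Wadsley's theorem supplies a common period $L$, so the critical strata $C_\alpha$ are indexed by prime geodesics together with iteration numbers, and the $\sphere^1$-stabilizer at any point of $C_\alpha$ is a finite cyclic group. In rational equivariant cohomology this makes $C_\alpha$ behave like a quasi-free $\sphere^1$-space. The negative bundle $\N_\alpha$ is $\QQ$-orientable by Theorem~\ref{T:orientability}: along any test loop in $C_\alpha$ coming either from $\pi_1$ or from the $\sphere^1$-orbit direction, the composition with the holonomy map $A$ is nullhomotopic in $\SO(n)$ — for the orbit direction this is Wadsley's theorem, phrased via the Poincaré return map. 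Thom isomorphism then identifies each summand with $t^{\lambda_\alpha}P^{\sphere^1}(C_\alpha;t)$, where $\lambda_\alpha$ is given by Bott's iteration formula for closed geodesics.

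The final step is to match the two sides. Each prime geodesic generates a tower of iterates whose contribution to the Morse polynomial is controlled by Bott's iteration formula together with the local geometry of the Poincaré map; summing this tower and passing to the $\sphere^1$-quotient gives a series rigidly determined by the cohomology of the CROSS model. The main obstacle is executing this matching without assuming Berger's conjecture: several prime lengths $\ell\mid L$ can coexist, and one must show that prime geodesics of shorter length contribute additional iterates in precisely the right way to recover the same total as in the single-length case. Theorem~\ref{T:orientability} is crucial because it allows the Thom isomorphism to be applied uniformly across all strata without twisted coefficients, reducing the remaining verification to a numerical identity fixed by the common period $L$ and the rational cohomology of the CROSS model.
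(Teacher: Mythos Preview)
Your proposal has two genuine gaps, one local and one structural.

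First, the orientability claim is incorrect. Theorem~\ref{T:orientability} gives a \emph{criterion} for orientability of $\N_\alpha$ along a loop, not a guarantee: the negative bundle is orientable along $c$ if and only if the holonomy loop $A\circ c$ is nullhomotopic in $\SO(n)$. When $M$ is modeled on $\CC\PP^{2m}$ the manifold is not spin, and there is no reason for $A$ to be contractible; the paper explicitly treats this case separately in Section~\ref{SS:non-orientable-case}, replacing the Thom isomorphism by $H^*_{\sphere^1}(\lp^i,\lp^{i-1};\QQ)\simeq H^{*-\ind(K_i)}_{\sphere^1}(\hat K_i;\QQ)^{-\ZZ_2}$ for a suitable double cover $\hat K_i$. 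Non-orientable real bundles are not $\QQ$-orientable, so ``$\QQ$-orientable'' does not save the argument.

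Second, and more seriously, the ``matching'' paragraph is where the entire proof lives, and you give no mechanism for it. You correctly identify the obstacle: without Berger's conjecture the critical strata $C_\alpha$ are unknown submanifolds of $T^1M$ of unknown dimension, and their rational equivariant cohomology is not determined by the model. Asserting that the sum over all iterates of all prime geodesics is ``rigidly determined by the cohomology of the CROSS model'' is precisely what needs to be proved, and Bott's iteration formula alone cannot deliver it --- it controls indices, not the Betti numbers of the strata. The paper avoids this direct computation entirely. Its strategy is a lacunarity argument: one first proves (Proposition~\ref{P:no-odd-degree}) that $H^{\mathrm{odd}}_{\sphere^1}(K;\QQ)=0$ for \emph{every} critical manifold $K$, by a contradiction using $p$-torsion for a large prime $p$ together with the Index Gap (Proposition~\ref{P:index-gap}) and the Lacunarity of $H^*_{\sphere^1}(\lp M,M;R)$ (Corollary~\ref{C:right-parity-R}). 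Once every $K$ has equivariant rational cohomology concentrated in even degrees, index parity forces all relative groups $H^*_{\sphere^1}(\lp^i,\lp^{i-1};\QQ)$ to sit in a single parity, and perfectness follows formally. Your outline contains none of these ingredients.
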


The proof of the perfectness of the energy functional uses three main ingredients.
\begin{enumerate}
\item \emph{Index parity}: On an orientable Besse manifold  $M$ the index of every closed geodesic has 
the same parity as $\dim M+1$.
\item \emph{Lacunarity}: If a manifold $M$ has the integral cohomology of a CROSS then, up to inverting
a finite number of primes, the cohomology $H^q_{\sphere^1}(\lp M,M;\ZZ)$ is zero whenever $q$ has
the same parity of $\dim M$.
\item \emph{Index gap}: If $c$ is a closed geodesic on a simply connected Besse manifold  $M$ 
of length $iL/q$, where $L$ is the common period of all geodesics and $i,q\in\mathbb{N}$,
 then, roughly speaking,
the differences $\ind c^q - \ind c^{q'}$, $q'\neq q$, are bounded away from zero by some constant independent
from $c$.
\end{enumerate}

The Index parity was proved by the second-named author in \cite{Wil}.
The Lacunarity and the Index gap are proved in the present paper.

The paper is structured as follows. Section \ref{S:Orientability} contains the proof of
Theorem \ref{T:orientability}. Several technical results are proved in the appendix.
Section \ref{S:free-loop-space-CROSS} and \ref{S:index-gap} contain the precise statements and
proofs of the Lacunarity and the Index gap, respectively. Finally, in Section \ref{S:perfectness}
Theorem \ref{T:perfectness} is proved, while Section \ref{S:proof-main-thm} contains the proof
of Theorem \ref{T:main}.


\section{Free loop space, and Morse-Bott theory}\label{S:recall}

Let $M$ be a Besse manifold. As defined above a Besse manifold is a Riemannian manifold 
all of whose geodesics are closed. 
Given $\sphere^1=\RR/\ZZ$, we define
the \emph{free loop space of $M$}, to be the Hilbert manifold $\lp M=\mathcal{H}^1(\sphere^1,M)$ of
maps $c:\sphere^1\to M$ of class $\mathcal{H}^1$. The \emph{energy functional} $E:\lp M\to \RR$ is defined as
\[
E(c)=\int_{\sphere^1}\|c'(t)\|^2dt.
\]

As pointed out in \cite{Wil}, the energy functional is a Morse-Bott function, and any critical set $K$ of energy $e$ consists of all closed geodesics of length $\ell=\sqrt{e}$. 
 From now on, we will use the following notation:
\begin{itemize}
\item We denote by $0=e_0<e_1<\ldots$ the sequence of critical values of $E$ (or \emph{critical energies}).
\item For every critical energy $e_i$, we denote by $K_i=E^{-1}(e_i)$ the corresponding critical manifold, and $\lp^i=E^{-1}([0,e_i+\epsilon))$ the corresponding sub-level set.
\item We define the \emph{index} of $K_i$, $\ind (K_i)$ to be the number of negative eigenvalues of $\textrm{Hess}(E)|_K$, counted with multiplicity.
\item For every critical manifold $K_i$, denote by $\N_i\to K_i$ the subbundle of $\nu(K_i)$ consisting of the negative eigenspaces of $H_i=\textrm{Hess}(E)|_{\nu(K)}$. We call $\N_i$ the \emph{negative bundle} of $K_i$.
\end{itemize}

For $i=0$, the critical set $K_0$ consists of point curves and will be from now on identified with $M$. For each $i>0$, let $\N_i^{\leq 1}=\{v\in \N_i\mid \|v\|\leq 1\}$, for some choice of norm on $\N_i$. By Morse-Bott theory, each sublevel set $\lp^k$ is homotopically equivalent to the complex obtained by successfully attaching the unit disk bundles $\N_i^{\leq 1}$, $i=1,\ldots k$, to $M=K_0$.

If one were to know the relative cohomology of the pairs $(\lp^i,\lp^{i-1})$, it would be possible to reconstruct the cohomology groups $H^*(\lp^i;\ZZ)$ iteratively, using the long exact sequence of the pair $(\lp^i,\lp^{i-1})$, and in the limit one would reconstruct the cohomology of $\lp M$. By excision, the relative homology group of the pair $(\lp^i , \lp^{i-1})$ is equal to the relative homology group of $(\N_i^{\leq 1},\partial \N_i^{\leq 1})$. If $\N_i\to K_i$ is orientable, then by the Thom isomorphism the cohomology of $(\N_i^{\leq 1},\partial \N_i^{\leq 1})$ equals the shifted cohomology of $K_i$:
\[
H^k(\lp^i,\lp^{i-1};\ZZ)\simeq H^k(\N_i^{\leq 1},\partial \N_i^{\leq 1};\ZZ)\simeq H^{k-\ind (K_i)}(K_i;\ZZ).
\]

Notice that the group $\sphere^1$ acts on $\lp M$ by reparametrization, and the energy functional is $\sphere^1$-invariant. In particular, the critical manifolds and the sublevel sets are all $\sphere^1$ invariant, and there is a natural $\sphere^1$-action on the negative bundles such that the maps $\N_i\to K_i$ are all $\sphere^1$-equivariant. Moreover, by choosing an $\sphere^1$-invariant metric on $\N_i$, the inclusion $(\N_i^{\leq 1},\partial \N_i^{\leq 1})\to (\lp^i,\lp^{i-1})$ is also $\sphere^1$-equivariant. In particular, using $\sphere^1$-equivariant cohomology we have that whenever $\N_i\to K_i$ is orientable, there is an isomorphism
\[
H^k_{\sphere^1}(\lp^i,\lp^{i-1};\ZZ)\simeq H_{\sphere^1}^{k-\ind (K_i)}(K_i;\ZZ).
\]

\subsection{Critical submanifolds}

 Given a Besse manifold $M$, by Wadsley Theorem \cite{Wad} the prime geodesics have a common period $L$, thus the geodesic flow induces a $\sphere^1$-action on the unit tangent bundle $T^1M$ of $M$. Moreover, if a critical set $K=E^{1}(e)$ contains prime geodesics, then the geodesics in $K$ have length $\ell=L/k$ for some integer $k$, and the map
 \[
 K_i\to T^1M,\qquad c\mapsto (c(0),c'(0)/\ell)
 \]
defines a diffeomorphism of $K_i$ into the subset of $T^1M$ fixed by $\ZZ_k\In \sphere^1$. This diffeomorphism is $\sphere^1$-equivariant, where $\sphere^1$ acts on $K_i$ by reparametrization, and on $T^1M$ by the geodesic flow. In particular if the length of the geodesics in $K$ is a multiple of $L$, then $K$ is diffeomorphic to $T^1M$.

In the rest of the paper we will use the following notation:
\begin{itemize}
\item We let $\{C_1,\ldots C_k\}$ denote the set of critical manifolds containing prime geodesics.
\item We call \emph{regular} any geodesic whose length is a multiple of $L$. Similarly, we say that a critical set is \emph{regular} if it contains regular geodesics.
\item Given a closed geodesic $c:[0,1]\to M$, we denote by $c^q:[0,1]\to M$, $c^q(t)=c(qt)$, the $q$-iterate of $c$.
\end{itemize}

Any critical set of positive energy can be written as $C^q=\{c^q\mid c\in C\}$ for some integer $q$ and some $C$ in  $\{C_1,\ldots, C_k\}$.

\section{Orientability of the negative bundle}
\label{S:Orientability}
Let $M$ be a Besse manifold. The goal of this section is to prove Theorem~\ref{T:orientability}.
The techniques used in this sections are completely independent from the remaining sections,
and therefore can be read independently or used as a black-box.

\subsection{Idea behind the proof} In \cite{Wil} the second-named author proved that the the Poincar\'e map
of a closed geodesic determines the parity of its index. In a way,
we will prove here the orientability of the negative bundle by showing that the Poincar\'e map of
a closed geodesic $c$ can help define a natural orientation on the negative space at $c$.

More precisely, consider a ``formal closed geodesic'', modelled by a map $R\in \Map([0,\pi],\Sym(n-1,\RR))$
(the curvature operator along the geodesic) and a map $A\in \OO(n-1)$ (the holonomy along the geodesic).
Given this data it is possible to define an ``index form'' $H$ 
for $(R,A)$, modelling the Hessian of the energy functional, and a ``negative space'' $\N$ defined as
the sum of the negative eigenspaces of $H$, which models the space of negative directions in the free loop space
(see Section \ref{SS:Alg-data}).

Choose a generic path $(R_{\tau}, A_{\tau})_{\tau\in [0,1]}$ of data from $(R,A)$ to the fixed data $(\Id, \Id)$. This path induces a family of index forms $H_\tau$, and a corresponding family of negative spaces $\N_\tau$. The idea is that, by fixing an orientation of $\N_1$, we want to induce an orientation on $\N_0=\N$ along the path. This can be easily done if $H_{\tau}$ does not change index along the path, in which case the collection $\{\N_\tau\}_{\tau\in [0,1]}$ defines a vector bundle on $[0,1]$. In general however $H_\tau$ does change index, and at the transition points $H_\tau$ has nontrivial kernel, as some eigenvalue of $H_\tau$ is changing sign.

For each $\tau$ it also makes sense to define a Poincar\'e map $P_{\tau}$ of the ``formal geodesic'' modelled by $(R_{\tau},A_{\tau})$, and one can consider the space
$$\mathcal{E}_\tau=\bigoplus_{\lambda\in (0,1)}E_{\lambda}(P_\tau),$$
with $E_{\lambda}(P_\tau)$ the eigenspace of $P_{\tau}$ of eigenvalue $\lambda$. Just like in the geometric setting, there is a natural identification between the kernel of $H_{\tau}$ (given by ``periodic Jacobi fields'') and the eigenspace $E_1(P_{\tau})$. Moreover, it was proved in \cite{Wil} that the dimension of $\mathcal{E}_\tau\oplus \N_\tau$ has constant parity at generic points. If $P_{\tau}$ is generic enough, it has eigenvalue 1 of geometric multiplicity at most 1, and therefore at every transition point both the index of $H_\tau$ (which equals the dimension of $\N_\tau$) and the number of real eigenvalues of $P$ in $(0,1)$ (which equals the dimension of $\mathcal{E}_\tau$) only change by 1.

In other words: at any transition point, the form $H_\tau$ will develop a one-dimensional kernel $V_H$, the Poincar\'e map $P_\tau$ will develop a one-dimensional fixed space $V_P$, such that there is a natural identification $V_H\simeq V_P$. Thus, at transition points, the sum $\N_\tau\oplus \mathcal{E}_\tau$ either gains or loses two one-dimensional subspaces that can be canonically identified. Since the sum of two identical subspaces carries a natural orientation, there is an obvious way to induce an orientation from $\N_{\tau-\epsilon}\oplus\mathcal{E}_{\tau-\epsilon}$ to $\N_{\tau+\epsilon}\oplus\mathcal{E}_{\tau+\epsilon}$. In our situation, we have $\mathcal{E}_0=0=\mathcal{E}_1$ and therefore the orientation on $\N_{1}\oplus \mathcal{E}_1=\N_1$ induces naturally an orientation on $\N_0\oplus\mathcal{E}_0=\N$.

Of course one needs to prove that the induced orientation on $N$ does not depend on the path chosen, and  therefore one needs a 2-dimensional variation, in which case more problem arise because the dimension of $\mathcal{E}_\tau$ can jump for other reasons (two real eigenvalues of $P_\tau$ could ``collide and disappear'', for example), which should be taken into account.

\subsection{The plan}

We sketch here the main steps of the proof: in Section \ref{SS:Alg-data} we define certain algebraic data
$(R_x,A_x)_{x\in \mathcal{X}}$ parametrized by a manifold $\mathcal{X}$, which formally model geometric structures
around
a closed geodesic. Moreover we show how one can construct, out of these data, a map $\N\to \mathcal{X}$
which resembles a vector bundle (we call these \emph{pseudo vector bundles}).


In Section \ref{SS:geom->alg} we consider a Besse manifold $M$ and a loop $c_s:\sphere^1\to \lp M$ in a critical manifold for the energy functional. We then explain how to associate an algebraic data set $(R_s,A_s)_{s\in \sphere^1}$ to $c_s$, and that proving the orientability of the negative bundle along $c_s$ is equivalent to proving the orientability of the pseudo vector bundle $\N\to \sphere^1$ induced by the algebraic data.

At first we assume that $A:\sphere^1\to \SO(n-1)$ is nullhomotopic, leaving the other case to the last section \ref{SS:not-nullhom}. In Sections \ref{SS:variation} and \ref{S:genericity} we produce a one-parameter deformation $$(R_{s,\tau},A_{s,\tau})_{(s,\tau)\in \sphere^1\times [0,1]}$$ of algebraic data, which is the original one for $\tau=0$ and has trivial negative bundle for $\tau=1$. This variation induces a pseudo vector bundle $\N\to \sphere^1\times [0,1]$, whose restriction to $\sphere^1\times \{0\}$ is the original one, and whose restriction to $\sphere^1\times\{1\}$ is orientable since the bundle is trivial.

In section \ref{S:modified-negative-bundle} we define a ``modified'' pseudo vector bundle $\N\oplus\E\to  \sphere^1\times [0,1]$, whose restriction to $\sphere^1\times\{0\}$ and $\sphere^1\times\{1\}$ coincides with $\N$.

In Section \ref{SS:local-orientability} we prove that a notion of orientability can be defined for $\N\oplus \E$, in such a way that $(\N\oplus \E)|_{\sphere^1\times\{0\}}$ is orientable if and only if $(\N\oplus \E)|_{\sphere^1\times\{1\}}$.

Finally, in Section \ref{SS:not-nullhom}, we discuss the case in which  $A:\sphere^1\to \SO(n-1)$ is not nullhomotopic. Using the results of the previous sections, we prove that in this case the negative bundle is not orientable.


\subsection{Algebraic data}\label{SS:Alg-data}

Let $M$ be a Riemannian manifold and $c:[0,2\pi]\to M$ a closed geodesic. Parallel translation along $c$ allows to identify the spaces $c'(t)^\perp$ with $V=c'(0)^\perp$, and it defines a map $A\in \OO(V)$ defined by $A(e(0))=e(2\pi)$ for every parallel normal vector field $e(t)$ along $c$. Moreover, the curvature operator of $M$ defines a map $R:[0,2\pi]\to \Sym^2(V)$ by
$$\scal{R(t)\cdot e_1(0),e_2(0)}=\scal{R(e_1(t),c'(t))c'(t),e_2(t)}$$
for every parallel normal vector fields $e_1(t),\,e_2(t)$ along $c$.

We say that the data $(R,A)$ model a ``formal geodesics'', because out of such data one can formally recover a number of objects usually related to real geodesics.

In fact, given the data of an Euclidean vector space $V\simeq \RR^{n-1}$, a piecewise continuous map $R\in \Map([0,2\pi],\Sym^2(n-1))$ and a $A\in \OO(n-1)$, we can define:

\begin{itemize}
\item The space $\vf=\{X:[0,2\pi]\to V\mid X(2\pi)=A\cdot X(0)\}$ of \emph{periodic vector fields}.
\item The space $\mathcal{J}=\{J:[0,2\pi]\to V\mid J''(t)+R(t)\cdot J(t)=0\}$ of \emph{Jacobi fields}.
\item The \emph{Poincar\'e map} $P:V\oplus V\to V\oplus V$ which sends $(u,v)$ to $(A^{-1}\cdot J(2\pi), A^{-1}\cdot J'(2\pi))$ where $J$ is the unique Jacobi field with $J(0)=u$, $J'(0)=v$. This map preserves the symplectic form $\omega((u_1,v_1),(u_2,v_2))=\scal{u_1,v_2}-\scal{u_2,v_1}$ and thus $P\in \Sp(n-1,\omega)$.
\item The \emph{index operator} $H:\vf\times \vf\to \RR$ defined as
\[
H(X,Y)=\int_0^{2\pi}\scal{X'(t),Y'(t)}-\scal{R(t)(X(t)),Y(t)}\, dt
\]
\end{itemize}
The index operator is symmetric, and by standard arguments it can be checked that the sum $\N\In \vf$ of the negative eigenspaces of $H$ is finite dimensional. We call this sum the \emph{negative space} of the pair $(R,A)$.

Given a manifold $\mathcal{X}$ and maps
\[
A:\mathcal{X}\to \SO(n-1),\quad R:\mathcal{X}\to \Map([0,2\pi],\Sym^2(n-1))
\]
then for every $x\in \mathcal{X}$ the data $(R_x, A_x)$ determine, in particular, a Poincar\'e map $P_x\in \Sp(n-1,\omega)$ an index form $H_x$ and a negative space $\N_x\in \vf$.

This determines a map $P:\mathcal{X}\to \Sp(n-1,\omega)$ and a space $\N=\coprod_{x\in \mathcal{X}}\N_x$ with a projection $\N\to\mathcal{X}$ sending $\N_x$ to $x$. The space $\N$ has a natural topology, induced by the inclusion $\N\In\mathcal{X}\times\Map([0,2\pi],V)$, and $\N\to \mathcal{X}$ has the structure of a fiberwise vector space.

In general, however, the map $\N\to \mathcal{X}$ is not a vector bundle, since the dimension of the fibers might change from point to point.

\begin{rem}\label{R:HtoP}
Notice that, given data $(R_x,A_x)_{x\in \mathcal{X}}$, there is always an isomorphism $\ker H_x\to E_1(P_x)$ ($E_1(P_x)$ the eigenspace of $P_x$ with eigenvalue 1). In fact, a vector field $X$ in $\ker H_x$ is a periodic Jacobi field, in the sense that $X''(t)+R_x(t)\cdot X(t)=0$ and $(X(2\pi)),X'(2\pi))=(A\cdot X(0),A\cdot X'(0))$. In particular, the vector $(X(0),X'(0))$ is a fixed vector for $P_x$ and therefore the map $X\mapsto (X(0),X'(0))$ defines the isomorphism $\ker H_x\to E_1(P_x)$.

In particular, when dimension of the fibers of $\N\to \mathcal{X}$ changes, at the transition point the kernel of $H_x$ must be nontrivial and therefore $P_x$ must have eigenvalue 1. This will be very important later on.
\end{rem}

\subsection{Paths of closed geodesics}\label{SS:geom->alg}
Let $C\In \lp M$ be a critical submanifold of the energy functional $E$ and let $c:\sphere^1\to C$, $s\mapsto c_s$ be a closed curve. Each $c_s$ defines
a unit speed closed geodesic which, possibly after scaling, can be parametrized as $c_s:[0,2\pi]\to M$.

Along $\alpha(s)=c_s(0)$, let $\{e_1(s),\ldots e_{n-1}(s)\}$ be an orthonormal frame of $c'_s(0)^{\perp}$ such that $e_i(0)=e_i(1)$, $i=1,\ldots n-1$. Moreover, let $e_i(s,t)$ denote the parallel transport of $e_i(s)$ along $c_s$. For each $s\in [0,1]$, let $A_s\in \SO(n-1)$ denote the holonomy map along $c_s$, i.e. $e_i(s,2\pi)=A_s\cdot e_i(s,0)$. Using the frame $\{e_1(s,t), \ldots e_{n-1}(s,t)\}$ we identify each space $c_s'(t)^{\perp}$ with $V=c_0'(0)^{\perp}\simeq\RR^{n-1}$.

We can see $A_s$ and $R_s(t)=R(\cdot, \dot{c}_s(t))\dot{c}_s(t)$ as maps
\begin{align*}
A&:\sphere^1\to \SO(n-1)\\
R&:\sphere^1\to \Map([0,2\pi],\,\Sym^2(n-1))
\end{align*}
and by the previous section, we can associate to the data $(R_s, A_s)_{s\in\sphere^1}$ a Poincar\'e map $P:\sphere^1\to \Sp(n-1,\omega)$ and a negative bundle $\N\to \sphere^1$. In this case the negative bundle is indeed a vector bundle, which coincides with the usual definition of negative bundle in Morse-Bott theory. In particular, the goal of this section is to prove that $\N\to \sphere^1$ is orientable.

\subsection{The variation}\label{SS:variation}

As previously explained, we want to produce a deformation $(R_{(s,\tau)}, A_{(s,\tau)})$, $(s,\tau)\in \sphere^1\times[0,1]$, such that $(R_{(s,0)}, A_{(s,0)})$ is related to the data of our geometric setup, $(R_{(s,1)}, A_{(s,1)})$ has trivial negative bundle, and in such a way that we can somehow ``keep track'' of the negative bundle along the deformation. In this section we emphasize the properties of the variation near $\tau=0$ and $\tau=1$, while in the next sections we concentrate on the interior of $\sphere^1\times [0,1]$.

Before defining the deformation, we slightly modify the initial data. We replace the $t$-interval $[0,2\pi]$ with a longer interval $[0,6\pi]$, and define $A_{(s,0)}=A_s$, and
\[
R_{(s,0)}(t)=\left\{\begin{array}{ll}R_s(t) & t\in [0,2\pi]\\ 4\cdot Id & t\in(2\pi, 4\pi]\\ Id & t\in (4\pi,6\pi]\end{array}\right.
\]
In general, given real numbers $0<L_1\ldots < L_k$ and curvature operators $R_i\in \Map((L_{i-1},L_i],\Sym^2(n-1))$, $i=1,\ldots k$ we define the concatenation $R_1\star\ldots\star  R_k\in \Map((0,L_k], \Sym^2(n-1))$ to be the operator whose restriction to $(L_{i-1},L_i]$ is $R_i$ for every $i=1,\ldots k$. In our case can then write
\[
R_{(s,0)}(t)=R_s(t)\star (4\; \Id)\star \Id.
\]
Notice that $R$ does not need to be continuous in $t$, but only piecewise continuous.

The Poincar\'e map of $(R_s,A_s)$ is the same as the one of $(R_{(s,0)}, A_{(s,0)})$, while the negative bundle of  $(R_{(s,0)}, A_{(s,0)})$ is a sum of the negative bundle of  $(R_s,A_s)$ with the (trivial) negative bundle of $(4\,\Id\star\Id,\Id)$. In particular, it will be enough to prove that the negative bundle of $(R_{(s,0)}, A_{(s,0)})$ is orientable.
\\

Notice that $A_{(s,0)}$ defines a loop in $\SO(n-1)$, which is contractible if the manifold is spin (for example, when $M=\sphere^n$ or $\HH\PP^n$), and let
\[
H_{(s,\tau)}:\sphere^1\times [0,1]\to \SO(n-1)
\]
be a homotopy with $H_{(s,0)}=A_{(s,0)}$ and $H_{(1,s)}=\Id$.

Given a function $\varphi:[0,1]\to \RR$ such that
\[
\left\{\begin{array}{ll}
\varphi(\tau)\equiv 0     & \textrm{around } \tau=0  \\
\varphi(\tau)\equiv 1     & \textrm{around } \tau=1  \\
\varphi'(\tau)\geq 0    &   \forall \tau\in (0,1)
\end{array}
\right.
\]

we now define a variation $(\hat{R}_{(s,\tau)}, A_{(s,\tau)})$ by
\begin{align}
\hat{R}_{(s,\tau)} &= \big((1-\varphi(\tau))R_s+\varphi(\tau)\Id\big)\star\big((4-3\tau)\cdot \Id\big) \label{E:R-variation}\\
A_{(s,\tau)} &= H_{(\varphi(\tau),s)}
\end{align}


Let us set the notation and call
\[
\Cyl=\sphere^1\times [0,1]
\]
the space parametrised by $s$ and $\tau$. The variation $(\hat{R}_{(s,\tau)}, A_{(s,\tau)})$ can be seen as parametrised data $(\hat{R}_x,A_x)_{x\in \Cyl}$, as defined in Section \ref{SS:Alg-data}. In particular, we have an associated Poincar\'e map $\hat{P}:\Cyl\to \Sp(n-1,\omega)$. For reasons that will be clearer later, we want to have some control over $\hat{P}$, and in particular over its real eigenvalues. At the moment we do not have such control, although in the following lemma we show that, sufficiently close to the boundary of $\Cyl$, the map $\hat{P}$ has no real eigenvalues.

\begin{lem}\label{L:around-the-boundary}
There is a neighbourhood $U$ of $\partial\Cyl$ such that the Poincar\'e map $\hat{P}$ of $(\hat{R}_x,A_x)_{x\in \Cyl}$ does not have any real eigenvalue on $U\setminus \partial \Cyl$.
\end{lem}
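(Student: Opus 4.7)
The plan is to treat the two boundary components of $\Cyl = \sphere^1 \times [0,1]$ separately, exploiting the factorization of the Jacobi propagator coming from the concatenation structure of $\hat{R}_{(s,\tau)}$.

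\emph{Near $\tau = 1$}, on the neighborhood where $\varphi \equiv 1$, one has $A_{(s,\tau)} = \Id$ and the first curvature piece is also $\Id$, so $\hat{P}(s, \tau)$ reduces to the middle-piece propagator $P_{(4 - 3\tau)\Id,\, 2\pi}$, a scalar $2 \times 2$-block matrix with diagonal blocks $\cos(2\pi\sqrt{4-3\tau})\,\Id_{n-1}$ and off-diagonal blocks determined by $\sin(2\pi\sqrt{4-3\tau})$. At $\tau = 1$ this equals $\Id$; a direct expansion in powers of $1 - \tau$ gives eigenvalues $1 \pm 3\pi(1 - \tau)\,i + O((1-\tau)^2)$, each of multiplicity $n-1$, all non-real.

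\emph{Near $\tau = 0$} the analysis is more delicate. Here $\varphi \equiv 0$, $A_{(s,\tau)} = A_s$, and the first curvature piece is $R_s$. The middle-piece propagator is scalar on each $2\times 2$ block and hence commutes with $\Id_2 \otimes A_s^{-1}$, yielding
\[
\hat{P}(s, \tau) \;=\; P_{(4-3\tau)\Id,\,2\pi} \cdot \hat{P}(s, 0), \qquad \hat{P}(s, 0) = (\Id_2 \otimes A_s^{-1})\, P_{R_s,\,2\pi}.
\]
Since $M$ is Besse, Wadsley's theorem yields a common period $L$ and an integer $k$ with $\hat{P}(s, 0)^k = \Id$, so the spectrum of $\hat{P}(s, 0)$ consists of roots of unity on $S^1$, with real eigenvalues only possibly at $\pm 1$. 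The perturbation factor expands as
\[
P_{(4-3\tau)\Id,\,2\pi} \;=\; \Id + \tau\,(B \otimes \Id_{n-1}) + O(\tau^2), \qquad B = \begin{pmatrix} 0 & -3\pi/4 \\ 3\pi & 0 \end{pmatrix} \in \sp(2,\RR),
\]
and its Krein form is positive definite: $J_0\,B = \mathrm{diag}(3\pi,\,3\pi/4) > 0$.

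The conclusion then follows from Krein's stability theorem for symplectic matrices: perturbing $Q \in \Sp$ with spectrum on $S^1$ by a one-parameter family whose infinitesimal generator has positive definite Krein form keeps the spectrum on $S^1$ for small parameter, and rotates each eigenvalue monotonically at angular speed bounded below by the smallest eigenvalue of $J_0\,B$. Applied to $\tau \mapsto P_{(4-3\tau)\Id,\,2\pi}\,\hat{P}(s, 0)$ at $\tau = 0$, this forces any eigenvalue of $\hat{P}(s, 0)$ at $\pm 1$ to be pushed to $e^{i(\psi \pm \delta(\tau))}$ with $\delta(\tau) \approx 3\pi\tau/2$, while the other unit-circle eigenvalues remain off the real axis by continuity; so $\hat{P}(s, \tau)$ has no real eigenvalue for small $\tau > 0$. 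The main technical obstacle is uniformity in $s$: the multiplicity of $1$ as an eigenvalue of $\hat{P}(s, 0)$ is typically large (twice the dimension of periodic Jacobi fields along $c_s$, reflecting the Besse geometry) and may depend on $s$, but positive-definiteness of $J_0\,B$ is preserved under restriction to invariant symplectic subspaces and is independent of $s$, so compactness of $\sphere^1$ yields a uniform neighborhood $U$ of $\partial\Cyl$.
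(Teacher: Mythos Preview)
Your argument is essentially correct and follows the same strategy as the paper's. Both proofs exploit the factorization $\hat{P}_{(s,\tau)}=M(\tau)\cdot P_s$ (with $M(\tau)$ the middle-piece propagator, commuting past $\Delta A_s^{-1}$), observe that $P_s$ has finite order so its spectrum lies in the roots of unity, and then show by a first-order computation that the eigenvalues at $\pm 1$ are pushed off the real axis. The paper does this last step by a direct contradiction: assuming a real eigenpair $(v_\tau,\lambda_\tau)$, differentiating $\hat P_{(s,\tau)}v_\tau=\lambda_\tau v_\tau$ at $\tau=0$, and pairing with $\omega(\cdot,v_0)$ to obtain $\theta'(0)\,\omega(Jv_0,v_0)=0$, which is impossible since $\omega(Jv_0,v_0)=-\|v_0\|^2$. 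Your positive-definiteness of $J_0B$ is exactly the abstract form of this same inequality; packaging it as Krein theory is more conceptual but amounts to the same computation.

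One point of caution: your stated version of Krein's theorem (``keeps the spectrum on $S^1$ and rotates each eigenvalue monotonically'') is stronger than what you need and is not quite the standard Krein--Moser statement, which concerns perturbations of matrices whose unit-circle eigenvalues have \emph{definite Krein signature}, a hypothesis on $Q$ rather than on the generator $B$. What you actually need is only that the compressed perturbation $\Pi_{E_1}\tilde B|_{E_1}$ on the fixed-point space $E_1=\ker(P_s-\Id)$ has purely imaginary nonzero spectrum; this follows directly from $J\tilde B>0$ (since $\omega(v,\tilde Bv)\neq 0$ for $v\neq 0$), and then semisimplicity of $P_s$ lets you conclude via standard perturbation of a semisimple eigenvalue. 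Both the paper's smoothness assertion for $\tau\mapsto v_\tau$ and your Krein invocation implicitly rely on this reduction when $\dim E_1>1$, so it is worth making explicit.
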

\begin{proof}
It is enough to show that for every $s\in \sphere^1$, $P_{(s,\tau)}$ does not have eigenvalues in $(0,1)$ in some neighbourhoods of $\tau=0$ and $\tau=1$.
For $\tau=0$, $\hat{P}_{(s,0)}=P_s$ is the Poincar\'e map of a closed geodesic in our geometric situation, and we know that $P_{s}^k=Id$ for some $k$. In particular the eigenvalues of $P_{s}$ are roots of unity. Since the Poincar\'e map depends continuously on $\tau$, its eigenvalues vary continuously as well and thus we only have to check how the eigenvalue 1 of $P_{s}$ disappears for any $\tau>0$ small.

Recall that we constructed the variation $(\hat{R}_{(s,\tau)},A_{(s,\tau)})$ so that for small values of $\tau$, $A_{(s,\tau)}=A_{s}$ and $\hat{R}_{(s,\tau)}$ only changes in the $t$-interval $[2\pi,4\pi]$, thus it is not hard to prove that, with respect to the canonical basis of $V\oplus V=\RR^{2(n-1)}$, $\hat{P}_{(s,\tau)}$ can be written as
\[
\hat{P}_{(s,\tau)}=\Delta A_{s}^{-1}\cdot (\cos \theta(\tau)\, \Id+\sin \theta(\tau)\, J)\cdot \Delta A_{s}\cdot P_s
\]
where $J=\left(\begin{array}{cc} & -Id_V \\ Id_V& \end{array}\right)$, $\Delta A_s=\diag(A_s,A_s)$ and $\theta(\tau)=2\pi\sqrt{4-3\tau}$.

Since we are only interested in the eigenvalues of $\hat{P}_{(s,\tau)}$ we can, up to conjugation of $\hat{P}_{(s,\tau)}$ and $P_s$ with $\Delta A_s$, reduce ourselves to the case
\[
\hat{P}_{(s,\tau)}=(\cos \theta(\tau) \Id+\sin\theta(\tau) J)\cdot P_s
\]
We claim that $\hat{P}_{(s,\tau)}$ cannot have real eigenvalues for small $\tau$.
Suppose in fact that $\hat{P}_{(s,\tau)}$ has an eigenvector $v_{\tau}$ with real eigenvalue $\lambda_{\tau}$, and $v_\tau$ tends to a fixed point of $P_s$ for $\tau\to 0^+$. Around $\tau=0$, the map $\tau\mapsto v_\tau$ is smooth. Differentiating the equation $\hat{P}_{(s,\tau)}v_{\tau}=\lambda_\tau v_\tau$ and taking the limit as $\tau\to 0$ we obtain
\[
\theta'(0)\cdot Jv_0+ P_sv'_0=\lambda'_0v_0+ v_0'.
\]
By evaluating the two sides of the equation using the symplectic form $\omega( \cdot, v_0)$ we get
$$\theta'(0)\cdot\omega(Jv_0, v_0)+\omega(P_sv'_0,v_0)=\omega(v'_0,v_0).$$
Since $v_0$ is a fixed point for $P_s$, we get 
$\omega(P_sv'_0,v_0)=\omega(v'_0,v_0)$ and the equation becomes $\theta'(0)\cdot\omega(Jv_0, v_0)=0$, which is not possible since $\theta'(0)\neq 0$ and $\omega(Jv_0,v_0)\neq 0$.

The case around $\tau=1$ can be handled in a similar fashion.
\end{proof}

The goal of the next section is to prove that we can produce a curvature operator
$$\tilde{R}:\Cyl\to \Map((4\pi,6\pi], \Sym^2(n-1)),$$
arbitrarily close to the constant map $\equiv \Id$ (and in fact equal to the constant map in a neighbourhood of $\partial \Cyl$), such that the Poincar\'e map of $(\hat{R}_x\star \tilde{R}_x,A_x)_{x\in \Cyl}$ is controlled.


\subsection{Making the Poincar\'e map generic}\label{S:genericity}

By \cite[Lemma 3.8]{Wil} there are curvature operators
\[
R_0,R_1,\ldots R_h:[0,2\pi]\to \Sym^2(n-1),
\]
with $h=\dim \Sp(n-1,\omega)$, such that the map
\begin{align*}
(-1,1)^h&\lra \Sp(n-1,\RR)\\
(a_1,\ldots a_h)&\lmt \textrm{Poincar\'e map of } (R_0+a_1R_1+\ldots a_hR_h,\Id)
\end{align*}
is a diffeomorphism between $(-1,1)^h$ and a neighbourhood $U$ of the identity in $\Sp(n-1,\omega)$.
In particular, given a map $\tilde{P}:\Cyl\to U$ there is a map
\[
\tilde{R}:\Cyl\to \Map([0,2\pi],\Sym^2(n-1))
\]
such that $\tilde{P}$ is the Poincar\'e map of $(\tilde{R},\Id)$. 

It is easy to see in general that that if $P_1, P_2, P_3$ are the Poincar\'e maps of $(R_1,A_1)$, $(R_2,A_2)$ and $(R_1\star R_2, A_3)$ respectively, then
\[
(\Delta A_1\cdot P_1)\cdot (\Delta A_2\cdot P_2)=\Delta A_3\cdot P_3
\]
where $\Delta A=\diag(A,A)\in \Sp(n-1,\omega)$.

In our case, given $\hat{P}_{x}$ the Poincar\'e map of the pair $(\hat{R}_{x}, A_{x})_{x\in \Cyl}$ defined in Equation \eqref{E:R-variation}, then the Poincar\'e map of $(R_{x}\star \tilde{R}_{x},A_{x})$ is
\[
\Delta A^{-1}_{x}\cdot \tilde{P}_{x}\cdot \Delta A_x \cdot \hat{P}_x
\]
In particular, for any map $P:\Cyl\to \Sp(n-1,\omega)$ sufficiently close to $\hat{P}$, such that $P|_{\partial \Cyl}=\hat{P}|_{\partial \Cyl}$, we can find an operator $\tilde{R}:\Cyl\to \Map([0,2\pi],\Sym^2(n-1))$ sufficiently close to the identity, such that $\tilde{R}|_{\partial\Cyl}\equiv \Id$ and $(\hat{R}_x\star\tilde{R}_x,A_x)_{x\in \Cyl}$ has Poincar\'e map $P$.
\\

The goal of this section is to prove that for a ``generic'' choice of $\tilde{R}$, the Poincar\'e map $P:\Cyl\to \Sp(n-1,\omega)$ of $(\hat{R}_x\star\tilde{R}_x,A_x)_{x\in \Cyl}$ has the following properties:
\begin{itemize}
\item For every $x\in \Cyl$, the positive real eigenvalues of $P_x$ have geometric multiplicity 1.
\item The set of points $x\in \Cyl$, whose Poincar\'e map $P_x$ has eigenvalue 1, is a smooth subvariety.
\item The function $\bar{\chi}:\Cyl\times \RR_+\to \RR$, $\bar\chi(x,\lambda)=\det(P_x-\lambda\,\Id)$ does not have critical value $0$ in the interior of $\Cyl$.
\end{itemize}

To this end, consider the following subsets of $\Sp(n-1,\omega)\times \RR_+$:
\begin{align*}
\mathcal{G}&= \{(Q,\lambda)\in \Sp(n-1,\omega)\times \RR_+\mid  \dim \ker(Q-\lambda\, \Id)\leq 1\}\\
\mathcal{G}_1&= \{(Q,\lambda)\in \Sp(n-1,\omega)\times \RR_+\mid  \dim \ker(Q-\lambda\, \Id)= 1\}\\
\mathcal{G}_0&= \{(Q,1)\in \mathcal{G}_1\}=\mathcal{G}_1\cap \left(\Sp(n-1,\omega)\times\{1\}\right)
\end{align*}

Clearly $\mathcal{G}$ is open in $\Sp(n-1,\omega)\times \RR_+$.  We show in Proposition \ref{P:map-surj} and Lemma \ref{L:G0inG1} that $\mathcal{G}_1$ is a smooth hypersurface of $\mathcal{G}$ and $\mathcal{G}_0$ is a smooth hypersurface of $\mathcal{G}_1$.

\begin{lem}\label{L:generic-P}
In any neighbourhood of $\hat{P}:\Cyl\to \Sp(n-1,\omega)$ there exists a $P:\Cyl\to \Sp(n-1,\omega)$ such that the image of
\[
(P\times \Id)|_{\mathrm{int}(\Cyl)\times \RR_+}: \mathrm{int}(\Cyl)\times \RR_+\to \Sp(n-1,\omega)\times \RR_+
\]
($\mathrm{int}(\Cyl)$ denotes the interior of $\Cyl$) is contained in $\mathcal{G}$, and it intersects $\mathcal{G}_0$ and $\mathcal{G}_1$ transversely.
\end{lem}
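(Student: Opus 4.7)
The plan is to apply parametric transversality to the evaluation map on the space of admissible perturbations. By the discussion preceding the statement, any $P:\Cyl\to\Sp(n-1,\omega)$ sufficiently close to $\hat{P}$ and agreeing with $\hat{P}$ on $\partial\Cyl$ arises as the Poincar\'e map of some $(\hat{R}_x\star\tilde{R}_x,A_x)_{x\in\Cyl}$; let $\mathcal{P}$ denote this open neighborhood of $\hat{P}$. The cornerstone of the argument is that the universal evaluation
\[
F\colon\mathcal{P}\times\inter{\Cyl}\times\RR_+\lra\Sp(n-1,\omega)\times\RR_+,\qquad F(P,x,\lambda)=(P(x),\lambda),
\]
is a submersion. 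Indeed, the cited [Wil, Lemma 3.8] says that tuning the parameters $(a_1,\ldots,a_h)$ of an $(R_0,R_1,\ldots,R_h)$ family realizes an arbitrary first-order perturbation of the Poincar\'e map; by cutting off such a perturbation of $\tilde{R}$ to a small neighborhood of a chosen interior point $x_0$, one obtains an arbitrary tangent vector in $T_{\hat{P}(x_0)}\Sp(n-1,\omega)$ as the first-order effect on $P(x_0)$ while leaving $P$ essentially unchanged elsewhere. Hence $dF$ is surjective.

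Granted the submersion property, $F$ is automatically transverse to any smooth submanifold of the target. Applying this to $\mathcal{G}_1$ and $\mathcal{G}_0$, which are smooth of codimensions $1$ and $2$ respectively by Proposition~\ref{P:map-surj} and Lemma~\ref{L:G0inG1}, the Sard--Smale theorem produces a residual set of $P\in\mathcal{P}$ for which the slice $(x,\lambda)\mapsto(P(x),\lambda)$ on $\inter{\Cyl}\times\RR_+$ is transverse to both. This handles the second and third bullet points of the statement: transversality to $\mathcal{G}_0$ makes the locus of $x$ with eigenvalue $1$ a smooth subvariety, and transversality to $\mathcal{G}_1$ (together with the first bullet) exactly says that $\bar{\chi}$ has no critical value $0$ in the interior.

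The first bullet point is handled by a dimension count. One has $\dim(\inter{\Cyl}\times\RR_+)=3$, while the bad locus $\mathcal{G}^c=(\Sp(n-1,\omega)\times\RR_+)\setminus\mathcal{G}$ is a finite union of locally closed strata of codimension $\geq 4$, coming from the classical fact that demanding a matrix-valued smooth function to drop corank by $\geq 2$ cuts out a codimension-$4$ condition. Transversality of a $3$-dimensional map to a codimension-$4$ subset forces empty intersection, so the image of the generic $P$ lies in $\mathcal{G}$. Near $\partial\Cyl$, Lemma~\ref{L:around-the-boundary} already guarantees that $\hat{P}$ has no positive real eigenvalues, and by continuity this persists for all $P$ close enough to $\hat{P}$; hence the transversality problem is only nontrivial over a compact part of the interior, where all three properties hold for a generic choice of $P$. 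The main technical obstacle is verifying the codimension estimate for $\mathcal{G}^c$ in the symplectic setting, distinguishing the cases $\lambda\neq 1$ and $\lambda=1$ (and, for $\lambda\neq\pm 1$, accounting for the forced twin eigenvalue $1/\lambda$ via the symplectic pairing); once this is granted, the remainder is a routine application of parametric transversality to an evaluation that is manifestly a submersion.
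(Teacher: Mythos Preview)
Your parametric transversality argument is correct and in fact somewhat more streamlined than the paper's proof. Both rest on the same hard input---the codimension estimate you flag as the ``main technical obstacle''---which is Proposition~\ref{P:geom-mult}: the set $\mathcal{M}_\lambda\subset\Sp(n-1,\omega)$ of matrices with $\lambda$ an eigenvalue of geometric multiplicity $\ge 2$ has codimension $\ge 4$ for $\lambda\neq 1$ and $\ge 3$ for $\lambda=1$. From this it follows that $\mathcal{G}^c$ has codimension $\ge 4$ in $\Sp(n-1,\omega)\times\RR_+$ (the $\lambda=1$ slice loses a codimension but regains it from the constraint $\lambda=1$), so your dimension count goes through.

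The difference lies in how $\mathcal{G}_1$-transversality is achieved. You get it for free from Sard--Smale once the universal evaluation $F$ is a submersion, and all three conditions are handled uniformly. The paper instead works in two stages: first it perturbs $P$ to land in $\Sp_1(n-1,\omega)$, to be an embedding on $\inter{\Cyl}$, and to meet $\Sp_0(n-1,\omega)$ transversely (this gives containment in $\mathcal{G}$ and transversality to $\mathcal{G}_0$); then, to force $\mathcal{G}_1$-transversality, it constructs by hand a vector field $V$ on $\Sp_1(n-1,\omega)$ with $d_{(Q,\lambda)}\chi(V)>0$ at every real eigenvalue $\lambda$ of $Q$ (Proposition~\ref{P:vector-field}), and flows $P$ along $fV$ for a short time to eliminate any critical point of $(P\times\Id)^*\chi$ at level $0$. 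Your route avoids this explicit vector-field construction entirely; the paper's route stays entirely within finite-dimensional transversality but pays for it with the extra Proposition~\ref{P:vector-field}.
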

\begin{proof}
Let $\Sp_1(n-1,\omega)$ denote  the set of symplectic matrices whose positive real eigenvalues have geometric multiplicity 1, and let $\Sp_0(n-1,\omega)\In \Sp_1(n-1,\omega)$ denote the subset of matrices with eigenvalue 1. By Proposition \ref{P:geom-mult} $\Sp_1(n-1,\omega)$ is open, and its complement has codimension $\geq 3$. Moreover, $\Sp_0(n-1,\omega)$ is a smooth algebraic subvariety, and it has codimension at least 1 in $\Sp_1(n-1,\omega)$.

Since $\dim\Cyl=2$ it is possible to find a $P:\Cyl\to \Sp_1(n-1,\omega)$, close to $\hat{P}$ and with $P|_{\partial \Cyl}=\hat{P}|_{\partial\Cyl}$, such that $P|_{\mathrm{int}(\Cyl)}$ intersects $\Sp_0(n-1,\omega)$ transversely. Moreover, we can do it in such a way that $P|_{\textrm{int}(\Cyl)}$ is an embedding (because $n\geq 3$) and $P(\textrm{int}(\Cyl))$ is disjoint from $P(\partial\Cyl)$. By construction, the image of $(P\times \Id)|_{\mathrm{int}(\Cyl)\times \RR_+}: \mathrm{int}(\Cyl)\times \RR_+\to \Sp(n-1,\omega)\times \RR_+$ lies in $\mathcal{G}$ and it intersects $\mathcal{G}_0$ transversely.

Let $\chi:\mathcal{G}\to \RR$ denote the function $\chi(Q,\lambda)=\det(Q-\lambda\, \Id)$. This function is a submersion by  Proposition \ref{P:map-surj}. Clearly $\mathcal{G}_1=\chi^{-1}(0)$ and $P\times \Id$ fails to meet $\mathcal{G}_0$ transversely if and only if $(P\times \Id)^*\chi: \Cyl\times \RR_+\to \RR$ has some critical point of value $0$. Suppose then that $(P\times \Id)^*\chi$ has critical value $0$. By Proposition \ref{P:vector-field}, there exists a vector field $V$ in $\Sp_1(n-1,\omega)$ such that $d_{\textsc{\tiny (Q,$\lambda$)}}\chi(V)>0$ for every $(Q,\lambda)$ in $\mathcal{G}$. We fix a nonnegative function $f$ supported on a neighbourhood of the image of $P$, which is $0$ on $P(\partial\Cyl)$ and let $\Phi_t$ be the flow of $f\,V$ for some time $t$. If $t$ is small enough, then $(P\circ \Phi_t)\times \Id$ does not have critical point $0$ and we obtain the result, up to replacing $P$ with $P\circ \Phi_t$.
\end{proof}

We will from now on consider a variation
\[
(R_x=\hat{R}_x\star\tilde{R}_x,A_x)_{x\in \Cyl}
\]
where $\hat{R}, A$ are defined in Equation \eqref{E:R-variation}, and $\tilde{R}$ is defined in such a way that the Poincar\'e map $P$ of $(R_x,A_x)_{x\in \Cyl}$ satisfies the conditions of Lemma \ref{L:generic-P}.


\subsection{The modified negative bundle}\label{S:modified-negative-bundle}

Recall from Section \ref{SS:Alg-data} that to the data $(R_x,A_{x})_{x\in \Cyl}$ there is an associated negative bundle $\N\to \Cyl$, which has the structure of a fiberwise vector space, but it is not a vector bundle in general since the dimension of the fibers might (and does, in general) change from point to point. Nevertheless, whenever the restriction of $\N$ to a subset of $U\In \Cyl$ has constant rank then $\N|_U\to U$ is a vector bundle in the usual sense. We make this precise with the following definition.

\begin{defn}
A \emph{pseudo vector bundle} consists of topological spaces $E, B$, a continuous surjection $\pi:E\to B$, a section $0:B\to E$ (the \emph{zero section}), a map $a:E\times_BE\to E$ (\emph{addition}) over $B$ and, for every $\lambda\in \RR$, an operation $\lambda\cdot :E\to E$ (\emph{scalar multiplication}) over $B$, satisfying the usual axioms of vector bundles. Moreover, we require that there exists an open dense set $B_{reg}$ of $B$ (the \emph{regular part}) and an open cover $\{U_i\}_{i\in I}$ of $B_{reg}$, such that $E|_{U_i}\simeq U_i\times \RR^{n_i}$ (where $n_i$ might depend on $U_i$).
\end{defn}

By abuse of language, we will sometimes denote a pseudo vector bundle by $\pi:E\to B$, or simply by $E$ when $B$ and $\pi$ are understood. It is clear that $\N$ admits the maps in the definition of pseudo vector bundle.

The following lemma shows that there exists an open dense set $\Cyl_{reg}\In \Cyl$ over which $\N$ is a vector bundle, thus proving that $\N\to \Cyl$ is a pseudo vector bundle.

\begin{lem} \label{L:equivalence}
Let $x\in \Cyl$. The following hold:
\begin{enumerate}
\item The index form $H_x$ has nontrivial kernel if and only if the Poincar\'e map $P_x$ has a fixed vector. Moreover, there is an isomorphism between the eigenspace of $P$ with eigenvalue 1,  $E_1(P_x)$, and $\ker(H_x)$. 
\item If  $\ker H_x=0$ for some $x\in \Cyl$, then $\N\to \Cyl$ is a vector bundle around $x$. It thus makes sense to define $\Cyl_{reg}=\{x\in \Cyl\mid \ker H_x=0\}$.
\item $\Cyl_{reg}$ is open and dense in $\Cyl$.
\end{enumerate}
\end{lem}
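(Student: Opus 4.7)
For part (1), I plan to integrate by parts in $H_x(X,Y)=\int_0^{2\pi}\bigl(\langle X',Y'\rangle-\langle R_x X,Y\rangle\bigr)\,dt$, producing a boundary contribution $\langle X'(2\pi),Y(2\pi)\rangle-\langle X'(0),Y(0)\rangle$ and an interior contribution $-\int_0^{2\pi}\langle X''+R_x X,Y\rangle\,dt$. Letting $Y\in\vf$ range first over vector fields compactly supported in $(0,2\pi)$ forces the Jacobi equation $X''+R_x X=0$; varying $Y$ freely under the constraint $Y(2\pi)=A_x Y(0)$ then forces $X'(2\pi)=A_x X'(0)$. Combined with the defining relation $X(2\pi)=A_x X(0)$ for $\vf$, this says exactly $(X(0),X'(0))\in E_1(P_x)$, and the evaluation map $X\mapsto (X(0),X'(0))$ is an isomorphism $\ker H_x\to E_1(P_x)$ by existence and uniqueness for the Jacobi initial value problem. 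This recovers Remark \ref{R:HtoP}.

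For part (2), I would equip $\vf$ with the natural $H^1$ inner product, realising $H_x$ as a bounded self-adjoint operator via Riesz representation. A direct computation shows $H_x=\Id-K_x$, where $K_x$ is the Riesz representer of the bounded $L^2$ pairing $(X,Y)\mapsto \langle (1+R_x)X,Y\rangle_{L^2}$; Rellich compactness makes $K_x$ compact and norm-continuous in $x$. Consequently $H_x$ has only finitely many negative eigenvalues, and $\N_x$ is the sum of the corresponding eigenspaces. When $\ker H_{x_0}=0$, the origin is isolated from the spectrum, so along a small loop $\Gamma\subset\CC$ enclosing the negative spectrum of $H_{x_0}$ but avoiding $0$, the Riesz projection $\Pi_x^-=\frac{1}{2\pi i}\oint_\Gamma (z-H_x)^{-1}\,dz$ is well-defined, norm-continuous in $x$, and of locally constant finite rank on a neighbourhood $U$ of $x_0$; its image is $\N_x$, exhibiting $\N|_U$ as a finite-rank vector bundle.

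Part (3) then splits cleanly. Openness of $\Cyl_{reg}$ is immediate from (2). For density, part (1) identifies
$\Cyl\setminus\Cyl_{reg}=\{x\in\Cyl:1\in\mathrm{spec}(P_x)\}=P^{-1}(\Sp_0(n-1,\omega))$.
Lemma \ref{L:generic-P} was set up precisely so that $P|_{\mathrm{int}(\Cyl)}$ meets the codimension-one smooth subvariety $\Sp_0\subset \Sp_1$ transversely; since $\dim\Cyl=2$, this preimage is a smooth one-dimensional submanifold of $\mathrm{int}(\Cyl)$, which together with the one-dimensional $\partial\Cyl$ is nowhere dense in $\Cyl$.

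The only nontrivial step I anticipate is the functional-analytic setup in (2): one must pin down a Hilbert completion of $\vf$ on which $K_x$ is genuinely compact and varies continuously in the operator norm, taking into account that $R_x$ is only piecewise continuous in $t$ (so that $L^\infty$-bounds on $R_x$ are what control $\|K_x-K_{x_0}\|$). Once $H_x=\Id-K_x$ is in place, the rest is standard spectral perturbation theory for compact self-adjoint operators.
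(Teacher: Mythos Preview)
Your proposal is correct and follows essentially the same approach as the paper: part~(1) via integration by parts (the paper phrases this as ``the first variation formula''), part~(2) via continuity of the spectrum of the symmetric operator $H_x$, and part~(3) via the transversality built into Lemma~\ref{L:generic-P}. The main difference is one of detail: for~(2) the paper simply asserts that the eigenvalues of $H_x$ vary continuously, whereas you supply the underlying functional-analytic mechanism (compact perturbation of the identity, Riesz projection onto the negative spectrum) that makes this rigorous; your caution about the piecewise continuity of $R_x$ is well placed but causes no trouble, since $L^\infty$-control suffices for norm-continuity of $K_x$.
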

\begin{proof}
1) If $X\in \ker H_x$ then by, the first variation formula for the energy function, $X$ is a periodic Jacobi field and
\[
P_x(X(0),X'(0))=(X(6\pi),X'(6\pi))=(X(0),X'(0)).
\]
In particular, there is an isomorphism $\ol{\phi}_x: \ker H_x\to E_1(P_x)$, $\ol{\phi}(X)=(X(0),X'(0))$.

2) Since $H_x$ is a symmetric map it has real eigenvalues and, since they change continuously with $x$, it follows that the number of negative eigenvalues of $H_x$ remains constant if $H_x$ has empty kernel. Therefore, if $\ker H_x=0$ then $\N$ is a vector bundle around $x$.

3) By the previous points, the complement of $\Cyl_{reg}$ consists of the points $x\in \Cyl$ such that $P_x$ has eigenvalue 1, 
which by construction is closed and has codimension at least 1 in $\Cyl$.
\end{proof}

\begin{defn}
A pseudo vector bundle $\pi:E\to B$ \emph{locally orientable} if there exists a $\ZZ_2$ cover $O\to B$ such that, for every component $B_i$ of $B_{reg}$, the restriction $O|_{B_i}$ is isomorphic to the orientation bundle of $E|_{B_i}\to B_i$.
\end{defn}

By the construction of the variation $(R_x, A_x)$ there is some $\epsilon>0$ small enough, such that the restriction of $\N$ to the curves
\[
\gamma_0(s)=(\epsilon, s)\qquad \gamma_1(s)=(1-\epsilon,s)
\]
has constant rank and therefore $\N|_{\gamma_i}$, $i=0,1$, is a vector bundle. If we could prove that $\N$ is locally orientable, it would follow that $\N|_{\gamma_0}$ is orientable if and only if $\N|_{\gamma_1}$ is orientable. Since $\N|_{\gamma_0}$ isomorphic to the sum of the geometric negative bundle with a trivial bundle, while $\N|_{\gamma_1}$ is trivial, this would prove the orientability of the geometric negative bundle. 
\\

With this goal in mind, we introduce a second pseudo-vector bundle $\E\to \Cyl$, $\E\In \Cyl\times(V\oplus V)$, whose fiber at $x\in \Cyl$ is the vector space consisting of the eigenspaces of $P_x$, with eigenvalues in $(0,1)$. It makes sense to define a fiberwise direct sum
\[
\N\oplus \E\to \Cyl
\]
which we call \emph{modified negative bundle} and this is the bundle that we will consider from now on.

By Lemma \ref{L:around-the-boundary}, $\E=0$ around $\partial \Cyl$ and thus we are not changing anything along $\gamma_1$ and $\gamma_2$. In particular, we can prove the local orientability of $\N\oplus \E$ instead of $\N$, and we will still obtain that the geometric negative bundle is orientable. The next two sections are devoted to proving the local orientability of $\N\oplus \E$.


\subsection{Local orientability of pseudo vector bundles}

In this section we show some classes of pseudo vector bundles that are always locally orientable.

We start by remarking that local orientability of a pseudo vector bundle $\pi:E\to B$ can be proved by showing that there is an open cover $\{U_i\}$ of $B$ such that $E|_{U_i}$ is locally orientable, and such that the corresponding orientation bundles $O(U_i)$ agree on intersections.

Let $M, \, N$ be manifolds of the same dimension, let $E\to N$ be a vector bundle,
and let $f: N\to M$ a closed map with finite fibers. For each $p\in M$, let $F_p=\bigoplus_{q\in f^{-1}(p)}E_q$.
Define $f_*E$ as the space $f_*E=\coprod_{p\in M}F_p$ with projection $f_*\pi:F\to M$ sending $F_p$ to $p$. By Sard's theorem, the set $M_{reg}\In M$ of regular points for $f$ is open and dense. Around each point $x\in M_{reg}$ there is a neighbourhood $U$ with $f^{-1}(U)=\coprod U_i$, such that $f|_{U_i}:U_i\to U$ is a diffeomorphism, and there is a canonical bijection $\phi_U:f_*E|_{U}\to \bigoplus_iE|_{U_i}$. The space $f_*E$, endowed with the roughest topology that makes the maps $f_*\pi$ and $\phi_U$ continuous, is clearly a pseudo vector bundle, which we call the \emph{push-forward} of $E$ via $f$.

%
%

\begin{lem}\label{L:loc orient sum}
Given a pseudo vector bundle $E\to M$ and a vector bundle $F\to M$, $E\oplus F\to M$ is locally orientable if and only if $E\to M$ is locally orientable. 
\end{lem}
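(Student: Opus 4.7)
The plan is to exploit the fact that $F$, being a genuine vector bundle defined on all of $M$, carries a globally defined orientation $\ZZ_2$-cover $O(F)\to M$, and that tensoring with $O(F)$ is an involution on the set of $\ZZ_2$-covers of $M$ which, on each component of $M_{\mathrm{reg}}$, implements the passage between the orientation bundles of $E$ and $E\oplus F$.

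First, I would observe that the regular parts of $E$ and of $E\oplus F$ coincide (with matching component decompositions). Indeed, on any open set $U\In M$ where $E|_U$ is trivial, $(E\oplus F)|_U$ is trivial since $F$ is locally trivial everywhere; conversely, if $(E\oplus F)|_U$ is trivial and $F|_U$ is trivial, then $E|_U$ is a complement of a trivial subbundle in a trivial bundle, hence trivial. Thus one may choose a common open dense set $M_{\mathrm{reg}}\In M$ that regularizes both pseudo vector bundles simultaneously. Next, I would invoke the multiplicativity of orientation covers under direct sums: for honest vector bundles $E', F'$ over a space $X$,
$$O(E'\oplus F')\;\cong\;O(E')\otimes O(F')$$
as $\ZZ_2$-covers of $X$. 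In particular, on each component $M_i$ of $M_{\mathrm{reg}}$,
$$O\bigl((E\oplus F)|_{M_i}\bigr)\;\cong\;O(E|_{M_i})\otimes O(F)|_{M_i}.$$

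The main construction is then immediate. If $E$ is locally orientable with witnessing $\ZZ_2$-cover $O\to M$, set $O':=O\otimes O(F)\to M$, which is a $\ZZ_2$-cover of $M$. By the displayed formula, for every component $M_i$ of $M_{\mathrm{reg}}$,
$$O'|_{M_i}\;\cong\;O|_{M_i}\otimes O(F)|_{M_i}\;\cong\;O(E|_{M_i})\otimes O(F|_{M_i})\;\cong\;O\bigl((E\oplus F)|_{M_i}\bigr),$$
so $E\oplus F$ is locally orientable. Conversely, given a $\ZZ_2$-cover $O''\to M$ witnessing local orientability of $E\oplus F$, the cover $O''\otimes O(F)$ witnesses that of $E$, using that $O(F)\otimes O(F)$ is the trivial $\ZZ_2$-cover of $M$.

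There is no serious obstacle here: the content is pure bookkeeping once one has the multiplicativity of orientation bundles under $\oplus$ and observes that $O(F)$ exists globally on $M$. The only point requiring a brief verification is the identification of $M_{\mathrm{reg}}$ and its components for $E$ and $E\oplus F$, so that the pointwise tensor product of the candidate orientation covers is correctly matched on each regular component.
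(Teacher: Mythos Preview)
Your proof is correct and takes essentially the same approach as the paper: tensor the witnessing $\ZZ_2$-cover with the globally defined orientation cover $O(F)$, using multiplicativity of orientation bundles under direct sum and the fact that $O(F)\otimes O(F)$ is trivial. One small quibble: your claim that a complement of a trivial subbundle in a trivial bundle is itself trivial is false in general (consider $T\sphere^2\hookrightarrow \sphere^2\times\RR^3$), but this step is unnecessary anyway---you may simply take the regular part of $E$ as the common $M_{\mathrm{reg}}$ for both pseudo vector bundles, since $F$ is locally trivial everywhere.
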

\begin{proof}
If $O_E\to M$, $O_F\to M$ are the orientation bundles of $E,\, F$ respectively, then the orientation bundle of $E\oplus F$ exists and it is given by $O_E\otimes O_F:=(O_E\times O_F)/\ZZ_2$, where $\ZZ_2$ acts diagonally. Therefore, $E\oplus F$ is locally orientable. On the other hand, if $O_{E\oplus F}\to M$, $O_F\to M$ are the orientation bundles of $E\oplus F$ and $E$ respectively, then $O_{E\oplus F}\otimes O_F$ is the orientation bundle of $E$.
\end{proof}

\begin{prop}\label{P:locally-orientable}
Let $M, \,N$ be manifolds without boundary of the same dimension, and let $f:N\to M$ be a closed map with finite fibers. Then for every vector bundle $E\to N$ the push forward $f_*E\to M$ is locally orientable.
\end{prop}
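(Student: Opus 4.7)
The plan is a patching argument combined with a local reduction. Since $M_{reg}$ is open and dense in $M$, any two continuous $\ZZ_2$-covers of $M$ that agree on $M_{reg}$ automatically agree on all of $M$; hence it is enough to construct, around each $p\in M$, a neighbourhood $U_p$ and a $\ZZ_2$-cover $O_{U_p}\to U_p$ restricting to the orientation bundle of $f_*E|_{U_p\cap M_{reg}}$. Such local covers then glue uniquely to a global $\ZZ_2$-cover $O\to M$ with the desired restriction property.

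For the local construction near a point $p\in M$, I would first exploit the closedness of $f$ together with finiteness of its fibres: writing $f^{-1}(p)=\{q_1,\dots,q_r\}$, I pick pairwise disjoint open neighbourhoods $W_i\ni q_i$ and an open $U\ni p$ with $f^{-1}(U)\In\bigsqcup_i W_i$. This yields $f_*E|_U\cong \bigoplus_i (f|_{W_i})_*(E|_{W_i})$, and by Lemma~\ref{L:loc orient sum} the local orientability of the sum reduces to that of each summand, so one may assume $f^{-1}(p)=\{q\}$ consists of a single point. Shrinking $W$ if necessary, I then trivialise $E|_W\cong W\times\RR^k$ and fix a reference orientation of $\RR^k$. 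In this normal form, an orientation of the fibre $F_{p'}=\bigoplus_{q'\in f^{-1}(p')\cap W}\RR^k$ over a regular $p'\in U$ is encoded by an ordering of the preimages modulo permutations $\sigma$ with $\mathrm{sgn}(\sigma)^k=+1$, since swapping two of the $\RR^k$ summands changes orientation by $(-1)^{k^2}=(-1)^k$.

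With this combinatorial picture in hand, I would define $O_U\to U$ as the space of pairs $(p',o)$ with $o$ such an orientation class, topologised by requiring local sections to correspond to continuous choices of ordering of preimages along paths in $U$ lifted to $W$. By construction this coincides with the orientation bundle of $f_*E$ on $U\cap M_{reg}$. The main obstacle, and the technical heart of the proof, is verifying that $O_U\to U$ is an honest $\ZZ_2$-cover across the non-regular locus, i.e.\ at points of $U$ where preimages of $f$ coalesce. This is where the hypotheses that $\dim M=\dim N$ and that $f$ is closed with finite fibres enter essentially: together they force $f|_W$ to behave, near a critical value, like a finite branched covering, so that the monodromy of orderings around the branching set matches the monodromy of the orientation bundle on $M_{reg}$ computed above. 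Once this local extension is established, the patching argument from the first paragraph produces the desired global cover $O\to M$.
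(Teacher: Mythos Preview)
Your outline correctly isolates where the difficulty lies, but you stop exactly at the point that carries all the content: you explicitly flag as ``the main obstacle'' the verification that $O_U\to U$ is a $\ZZ_2$-cover across the non-regular locus, and then assume it. Saying that $f|_W$ behaves like a finite branched covering is true but not enough; to complete your approach one would have to show that the monodromy of orderings of preimages around each branch component lands in the alternating group (when $\operatorname{rk}E$ is odd), and nothing in the hypotheses makes this automatic without further argument.

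The paper avoids this monodromy analysis entirely by a different mechanism. Instead of tracking orderings of individual preimages, it attaches to each local sheet $U_i\subset f^{-1}(U)$ a single number $\epsilon_i\in\{0,1\}$, the parity of the generic cardinality of $f_i^{-1}(p')$ for $f_i=f|_{U_i}$. This parity is well-defined because $f_i\colon U_i\to U$ is proper between equidimensional manifolds and hence has a mod-$2$ degree. One then declares $\mathcal{O}(U)$ to be the set of functions $\theta$ on $r$-tuples of local frames $b=(b_1,\dots,b_r)$ of $E|_{U_i}$ satisfying
\[
\theta(b)=\theta(b')\cdot\prod_{i=1}^r\sgn(\det J_i)^{\epsilon_i},
\]
where $J_i$ takes $b_i$ to $b_i'$. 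The point is that a change of frame on $U_i$ alters the orientation of $(f_*E)_{p'}$ by $\sgn(\det J_i)^{m_i(p')}$ with $m_i(p')=\#f_i^{-1}(p')$, and this depends only on $m_i\bmod 2=\epsilon_i$. Because the formula references only the frames on the $U_i$ and the invariant $\epsilon_i$, it makes sense uniformly over all of $U$, regular or not, and always yields a two-element set; the \'etale space of this presheaf is then the required $\ZZ_2$-cover with no separate extension step.

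Two smaller points. Your patching claim that two $\ZZ_2$-covers of $M$ agreeing on the dense open $M_{reg}$ must agree on $M$ is false in general: if $M\setminus M_{reg}$ locally disconnects $M_{reg}$ (e.g.\ a hypersurface), there are distinct extensions. So one really needs a canonical local construction, not merely existence plus density. And Lemma~\ref{L:loc orient sum} as stated concerns a pseudo vector bundle summed with an honest vector bundle, not two pseudo vector bundles, so your reduction to a single preimage needs a (straightforward) extension of that lemma.
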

\begin{proof}
We will define a pre-sheaf on $\mathcal{O}$ on $N$ such that $\mathcal{O}(U)=\ZZ_2$ for any $U$ small enough, and whose restriction to any open set in  $N_{reg}$ is isomorphic to the sheaf of local sections of the orientation bundle. Then the \'etale space $O=\coprod_{p\in N}O_p$, $O_p=\varinjlim_{U\ni p}O(U)$, together with the projection $O\to N$ sending $O_p$ to $p$, will be the $\ZZ_2$-cover we need to prove the result.

For any $p\in M$, $f^{-1}(p)$ is a discrete set $\{q_1,\ldots q_r\}$. Given a small neighbourhood $U$ of $p$, $f^{-1}(U)$ is a disjoint union of neighbourhoods $U_i$ of $q_i$, $i=1,\ldots r$. Defining $f_i=f|_{U_i}:U_i\to U$, the preimage of $f_i$ has almost everywhere constant parity, and we define $\epsilon_i=0$ if given parity is even, and $\epsilon_i=1$ if it is odd.

Let $b=(b_1,\ldots b_r)$ be an $r$-tuple where each $b_i$ is a local basis of sections of $E|_{U_i}$, and let $\mathcal{B}$ denote the set of such $r$-tuples. Given two $r$-tuples $b, b'$, there exists an $r$-tuple $(J_1,\ldots J_r)$ where each $J_i$ is a local section of $\GL(E|_{U_i})$ taking $b_i$ to $b_i'$. We can finally define $\mathcal{O}(U)$ to be the set of maps $\theta: \mathcal{B}\to \{\pm 1\}$ such that for any $b,b'$ in $\mathcal{B}$,
\[
\theta(b)=\theta(b')\cdot\prod_{i=1}^r\sgn\left(\det J_i\right)^{\epsilon_i}.
\]
It is easy to see that if $U$ is a small neighbourhood of a point in $M_{reg}$ then $f^{-1}(U)$ is a disjoint union of open sets $U_1,\ldots U_r$ such that $f_i:U_i\to U$ is a homeomorphism. In particular $\epsilon_i=1$ for all $i=1,\ldots r$, and $\mathcal{O}(U)$ coincides with the set of sections of the orientation bundle of $f_*E|_U$.
\end{proof}


\subsection{Local orientability of $\N\oplus \mathcal{E}$}\label{SS:local-orientability}

In this section we prove that the modified vector bundle is locally orientable, by showing that it locally takes the form $E_1\oplus f_*E_2$ for some vector bundle $E_1\to \Cyl$ and some push forward $f_*E_2\to \Cyl$ with respect to some function $f:\ol{\Cyl}\to \Cyl$. In order to do this, we must first construct the space $\ol{\Cyl}$, which will be defined by gluing two spaces $\Cyl_P$ and $\Cyl_H$ along their (diffeomorphic) boundaries.
\\

Consider the subset $\Cyl_P\In \Cyl\times (0,1]$ defined as
\[
\Cyl_P=\{(x,\lambda)\in \Cyl\times (0,1]\mid \lambda\textrm{ is an eigenvalue of }P_x\}
\]
and let $f_P:\Cyl_P\to \Cyl$ denote the obvious projection.

\begin{prop} The following hold:
\begin{enumerate}
\item The set $\Cyl_P$ is a submanifold with boundary $\partial\, \Cyl_P=\Cyl_P\cap \Cyl\times\{1\}$.
\item There is a line bundle $E_P\to \Cyl_P$, such that $\mathcal{E}={f_P}_*E_P|_{\inter{\Cyl_P}}$, where $\inter{\Cyl_P}$ denotes the interior of $\Cyl_P$.
\end{enumerate}
\end{prop}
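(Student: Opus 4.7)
The plan is to derive both parts from the transversality set up in Lemma~\ref{L:generic-P}, combined with the eigenvalue control near $\partial\Cyl$ provided by Lemma~\ref{L:around-the-boundary}.

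For part (1), observe that $\Cyl_P$ is precisely the zero locus of $F(x,\lambda)=\det(P_x-\lambda\Id)$ inside $\Cyl\times(0,1]$. On the open stratum $\inter{\Cyl}\times(0,1)$, condition (3) of Lemma~\ref{L:generic-P} says that $F$ has $0$ as a regular value, so $\Cyl_P$ is there a smooth codimension-one, hence two-dimensional, submanifold. Along the face $\inter{\Cyl}\times\{1\}$, the transversality of $P\times\Id$ with $\mathcal{G}_0\subset\Sp(n-1,\omega)\times\{1\}$ cuts out a smooth one-dimensional locus and supplies the collar structure needed to view $\Cyl_P$ as a manifold with boundary, the boundary lying in $\Cyl\times\{1\}$.

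Near $\partial\Cyl$ I would appeal to Lemma~\ref{L:around-the-boundary}: since $\tilde R$ was chosen identically $\Id$ on $\partial\Cyl$ and can be taken arbitrarily close to $\Id$ elsewhere, the conclusion of that lemma persists for $P$, so $P$ has no real eigenvalues on a punctured neighborhood of $\partial\Cyl$. On $\partial\Cyl$ itself, $P_{(s,0)}=P_s$ is the Poincar\'e map of a closed geodesic and so has eigenvalues among the roots of unity, whose only real representative in $(0,1]$ is $1$; at $\tau=1$, $P_{(s,1)}=\Id$ again only contributes $\lambda=1$. Hence $\Cyl_P\cap(\partial\Cyl\times(0,1])=\partial\Cyl\times\{1\}$, which is already part of the claimed boundary. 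Patching these local descriptions together yields $\partial\Cyl_P=\Cyl_P\cap\Cyl\times\{1\}$.

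For part (2), define the line bundle fiberwise by $(E_P)_{(x,\lambda)}=\ker(P_x-\lambda\Id)\subset V\oplus V$. By condition (1) of Lemma~\ref{L:generic-P} each such kernel is exactly one-dimensional, so $E_P\to\Cyl_P$ is a rank-one vector bundle; smoothness follows from the implicit function theorem applied to $(P_x-\lambda\Id)v=0$, using the regularity from part (1). For any $x\in\Cyl$, the fiber $f_P^{-1}(x)$ is the finite set of eigenvalues of $P_x$ in $(0,1]$, and passing to $\inter{\Cyl_P}$ excludes $\lambda=1$, leaving only eigenvalues in $(0,1)$; consequently
\[
(f_P)_*E_P\big|_x \;=\; \bigoplus_{\lambda\in(0,1)}E_\lambda(P_x) \;=\; \mathcal{E}_x,
\]
as required.

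The delicate step, which I expect to be the main obstacle, is verifying the manifold-with-boundary structure of $\Cyl_P$ at the corner circles $\partial\Cyl\times\{1\}$: here the ambient $\Cyl\times(0,1]$ itself has corners, $P_{(s,1)}=\Id$ is maximally degenerate, and one must check that the two-dimensional interior stratum of $\Cyl_P$ glues compatibly with the one-dimensional circles. This should ultimately reduce to a local normal-form computation for $P$ in the spirit of the one carried out in the proof of Lemma~\ref{L:around-the-boundary}.
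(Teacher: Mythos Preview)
Your argument is essentially the paper's: both parts rest on Lemma~\ref{L:generic-P}, and your regular-value description via $F=\det(P_x-\lambda\,\Id)$ is equivalent to the paper's transversality of $P\times\Id$ to the pair $\mathcal{G}_0\subset\mathcal{G}_1$ (the proof of Lemma~\ref{L:generic-P} itself identifies $\mathcal{G}_1=\chi^{-1}(0)$, and Proposition~\ref{P:map-surj} makes $\chi$ a submersion). Part (2) is handled identically in both.

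Your concern about the corner circles $\partial\Cyl\times\{1\}$ is misplaced, and no normal-form computation is needed. The paper works entirely in $\inter{\Cyl}\times\RR_+$, where the transversality is available, and identifies $\Cyl_P$ with one of the two pieces into which $(P\times\Id)^{-1}(\mathcal{G}_0)$ cuts $(P\times\Id)^{-1}(\mathcal{G}_1)$; the points $(x,1)$ with $x\in\partial\Cyl$ are not even in $(P\times\Id)^{-1}(\mathcal{G}_1)$, since $\dim\ker(P_x-\Id)>1$ there. More to the point, Lemma~\ref{L:around-the-boundary} (which one may arrange to hold for $P$ by taking $P=\hat P$ on a collar of $\partial\Cyl$, the transversality conditions being vacuous where $\hat P$ has no real eigenvalue) shows that $\Cyl_P$ meets a neighbourhood of $\partial\Cyl\times(0,1]$ only in those two circles, so they are \emph{isolated} from the two-dimensional piece rather than glued to it. The subsequent construction of $\Sigma$, $\ol{\Cyl}$ and the local-orientability argument only ever uses $\Cyl_P$ over $\inter{\Cyl}$, so these circles are irrelevant.
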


\begin{proof}
1) As in Section \ref{S:genericity}, let $\mathcal{G}_1\In \Sp(n-1,\RR)\times \RR_+$ denote the subset of couples $(Q,\lambda)$ where $\dim \ker (Q-\lambda \Id)= 1$, and let $\mathcal{G}_0$ denote the subset of couples $(Q,1)$ in $\mathcal{G}_1$. By the construction of $(R_x,A_x)_{x\in\Cyl}$ (cf. Lemma \ref{L:generic-P}) the image of
\[
P\times \Id: \Cyl\times \RR_+\lra \Sp(n-1,\RR)\times\RR_+
\]
intersects $\mathcal{G}_1$ and $\mathcal{G}_0$ transversely. In particular $(P\times \Id)^{-1}(\mathcal{G}_1)$ is a smooth hypersurface, $(P\times \Id)^{-1}(\mathcal{G}_0)$ is a smooth hypersurface in $(P\times \Id)^{-1}(\mathcal{G}_1)$ dividing it in two components, and it is easy to see that $\Cyl_P$ is one such component. In particular, $\Cyl_P$ is a smooth manifold with boundary $\partial \Cyl_P=(P\times \Id)^{-1}(\mathcal{G}_0)$.

2) By construction, for every $x\in \Cyl$, every positive real eigenvalue of $P_x$ has geometric 
multiplicity $1$ (cf. Lemma \ref{L:generic-P}). In particular, the space $E\In \Cyl_P\times (V\oplus V)$ given by
\[
E_P=\{((x,\lambda),v)\in \Cyl_P\times (V\oplus V)\mid P_xv=\lambda v\}
\]
with the obvious projection $E_P\to \Cyl_P$ is a line bundle, and by definition $\mathcal{E}={f_P}_*(E_P)|_{\inter{\Cyl_P}}$.
\end{proof}

By Lemma \ref{L:equivalence}, the set
\[
\Sigma=f_P(\partial\,\Cyl_P)
\]
coincides with the set of points $x$ such that $\ker H_x\neq 0$ and, therefore, the set of points
around which $\N$ is not a vector bundle.
%
%
Since $\ker H_x$ is isomorphic to the eigenspace of $P_x$ of eigenvalue 1, and this is 1 dimensional,
it follows that $\dim \ker H_x=1$ for every $x\in\Sigma$. Since $\Sigma$ is compact, it is possible to
find a neighbourhood $U_{\Sigma}$ of $\Sigma$ and some $\epsilon>0$ such that $H_x$ has at most one
eigenvalue in $(-\epsilon, \epsilon)$ for every $x$ in $U_\Sigma$. Consider now
\begin{align*}
\psi:& U_\Sigma \times (-\epsilon, \epsilon)\to \RR\qquad \psi(x,\lambda)=\det(H_x-\lambda I)
\end{align*}
and define $\Cyl_H=\big(U_\Sigma\times (-\epsilon,0]\big)\cap \psi^{-1}(0)$. Equivalently, $\Cyl_H$ is
the set of pairs $(x,\lambda)$ such that $\lambda$ is a nonpositive eigenvalue of $H_x$.
Let $f_H:\Cyl_H\to U_\Sigma$ denote the obvious projection, and let $E_H\to \Cyl_H$ be the line bundle
whose fiber at $(x,\lambda)$ is the (one dimensional by definition) eigenspace of $H_x$ with eigenvalue $\lambda$.

\begin{prop}\label{P:gluable}
The following hold:
\begin{enumerate}
\item There is a diffeomorphism $\phi:\partial\,\Cyl_P\to \Cyl_H\cap\big( \Cyl\times\{0\}\big)$ such that $f_H\circ \phi=f_P$.
\item $\Cyl_H$ is a smooth submanifold, with boundary $\partial\,\Cyl_H=\Cyl_H\cap \Cyl\times\{0\}$.
\item There is an isomorphism of line bundles
\[
\ol{\phi}:E_P|_{\partial\, \Cyl_P}\lra E_H|_{\partial\, \Cyl_H}
\]
over $\phi$.
\end{enumerate}
\end{prop}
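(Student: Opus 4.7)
The plan is to handle the three parts in the order (2), (1), (3): part (2) provides the smooth structure needed to talk about a diffeomorphism in (1), while both (1) and (3) then follow from fibering the kernel--fixed-vector identification of Lemma \ref{L:equivalence}(1) over the relevant base. The serious geometric input lies in (2); once this is in place, the rest is largely formal.

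For (2), recall that by the choice of $U_\Sigma$ and $\epsilon$, for every $x\in U_\Sigma$ the operator $H_x$ has at most one eigenvalue in $(-\epsilon,\epsilon)$, and when present it is simple. This simplicity yields $\partial_\lambda\psi(x,\lambda)\neq 0$ on $\psi^{-1}(0)$, so $\psi$ is a submersion along its zero set; equivalently $\psi^{-1}(0)$ is locally the graph of the smooth function $\mu(x)$ giving the small eigenvalue of $H_x$. Cutting by the closed half-space $\{\lambda\leq 0\}$ then gives a smooth manifold with boundary $\psi^{-1}(0)\cap(\Cyl\times\{0\})=\Cyl_H\cap(\Cyl\times\{0\})$, provided that $d\mu\neq 0$ on $\Sigma$. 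This last transversality is transported from the transversality of $P\times\Id$ to $\mathcal{G}_0$ in Lemma \ref{L:generic-P}, via the canonical identification of the spectral subspace of $H_x$ at the eigenvalue $0$ with the generalized eigenspace of $P_x$ at the eigenvalue $1$ supplied by Lemma \ref{L:equivalence}(1).

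For (1), Lemma \ref{L:equivalence}(1) shows that $x\in\Sigma$ is equivalent to $1\in\mathrm{spec}(P_x)$ and to $0\in\mathrm{spec}(H_x)$. Simplicity of the eigenvalue $1$ of $P_x$ (Lemma \ref{L:generic-P}) and of the eigenvalue $0$ of $H_x$ (by the choice of $U_\Sigma$) make both projections
\[
f_P\colon \partial\,\Cyl_P\to \Sigma,\qquad f_H\colon \Cyl_H\cap(\Cyl\times\{0\})\to \Sigma
\]
bijective, and the transversality conditions from Lemma \ref{L:generic-P} together with part (2) upgrade them to diffeomorphisms. Hence $\phi:=f_H^{-1}\circ f_P$, explicitly $\phi(x,1)=(x,0)$, is a diffeomorphism and $f_H\circ\phi=f_P$ holds by construction.

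For (3), at each $(x,1)\in\partial\,\Cyl_P$ the fiber $E_P|_{(x,1)}=E_1(P_x)$ is canonically isomorphic to $E_H|_{\phi(x,1)}=\ker H_x$ via the map sending $(u,w)\in E_1(P_x)$ to the unique periodic Jacobi field $X_{u,w}$ with $X(0)=u$ and $X'(0)=w$; this is the inverse of the isomorphism of Lemma \ref{L:equivalence}(1). Setting $\ol{\phi}$ to be this identification fiberwise over $\phi$ gives the desired line bundle isomorphism. Smoothness follows by choosing a smooth local section of the simple eigenvalue $1$ of $P_x$ (possible by Lemma \ref{L:generic-P}) and invoking smooth dependence of Jacobi fields on initial data, which yields compatible smooth trivializations of $E_P|_{\partial\,\Cyl_P}$ and $E_H|_{\partial\,\Cyl_H}$. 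The main obstacle in the whole proposition is the verification that $d\mu\neq 0$ on $\Sigma$ in (2); everything else is a formal consequence of Lemmas \ref{L:equivalence} and \ref{L:generic-P}.
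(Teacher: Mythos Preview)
Your treatment of parts (1) and (3) is essentially identical to the paper's: the map $\phi(x,1)=(x,0)$ and the fiberwise isomorphism $(v,w)\mapsto J_{v,w}$ are exactly what the paper writes down. The only structural difference is that the paper proves (1) \emph{before} (2) and then feeds (1) into (2): since $\partial\,\Cyl_P$ is already known to be smooth (from the preceding proposition on $\Cyl_P$), its image $\Sigma\times\{0\}$ under the obvious projection is a smooth codimension-one submanifold of the smooth surface $\psi^{-1}(0)$, and $\Cyl_H$ is declared to be one of the two sides it cuts out. This sidesteps any need to show $d\mu\neq 0$ directly.

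The gap in your argument is the sentence claiming that $d\mu\neq 0$ on $\Sigma$ is ``transported'' from the transversality of $P\times\Id$ to $\mathcal{G}_0$ via Lemma~\ref{L:equivalence}(1). That lemma gives, for each fixed $x\in\Sigma$, an isomorphism $\ker H_x\cong E_1(P_x)$ between the \emph{zero} eigenspace of $H_x$ and the \emph{unit} eigenspace of $P_x$; it says nothing about how the nearby eigenvalue $\mu(x)$ of $H_x$ and the nearby eigenvalue $\lambda_P(x)$ of $P_x$ move as functions of $x$ once one leaves $\Sigma$. There is no functional relation $\mu=F(\lambda_P)$ coming from that identification, so $d\lambda_P\neq 0$ on $\Sigma$ (which is what transversality to $\mathcal{G}_0$ gives) does not by itself imply $d\mu\neq 0$. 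What actually forces $\mu$ to change sign across $\Sigma$ is the index-jump principle alluded to in the ``Idea behind the proof'' subsection: when a simple real eigenvalue of $P_x$ crosses $1$ transversally, $\dim\mathcal{E}_x$ changes by one, and by the parity result of \cite{Wil} so does $\dim\N_x=\ind H_x$, hence $\mu$ cannot have a one-sided zero along $\Sigma$. If you want to keep your order $(2)\to(1)\to(3)$, this is the ingredient you need to invoke instead of Lemma~\ref{L:equivalence}(1); otherwise, following the paper and using (1) to import the smooth structure of $\partial\,\Cyl_P$ onto $\Cyl_H\cap(\Cyl\times\{0\})$ is the cleaner route.
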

\begin{proof}
1) The set $\Cyl_H\cap \big(\Cyl\times\{0\}\big)$ consists of points $(x,0)$ where $x\in U$ and  $\ker H_x\neq 0$. By Lemma \ref{L:equivalence} the map $\phi:\partial\,\Cyl_P\to\Cyl_H\cap \big(\Cyl\times\{0\}\big)$ sending $(x,1)$ to $(x,0)$ is a diffeomorphism. 

2) We first prove that $0$ is a regular value of  $\psi:U_\Sigma\times (-\epsilon, \epsilon)\to \RR$. Given $(x,\lambda)\in \psi^{-1}(0)$, $\lambda$ is an eigenvalue of $H_x$. By construction $\lambda$ has geometric multiplicity $1$ and, since $H_{x}$ is symmetric, it has algebraic multiplicity 1 as well. Therefore, by letting ${\partial\over\partial t}$ denote the vector at $(x,\lambda)$ tangent to $(-\epsilon,\epsilon)$, we have
\[
d_{(x,\lambda)}\psi\left({\partial\over\partial t}\right)= {d\over dt}\Big|_{t=\lambda}\det(H_x-tI)\neq 0.
\]
Then $\psi^{-1}(0)$ is a smooth submanifold of $U\times (-\epsilon, \epsilon)$.
By the previous point, the subset $(\Cyl\times\{0\})\cap \psi^{-1}(0)=\Cyl_H\cap (\Cyl\times\{0\})$ is 
a smooth submanifold of $\psi^{-1}(0)$ which divides it into two components, one of which is $\Cyl_H$.

3)  For any point $(x,1)\in \partial\, \Cyl_P$, we have a map $\ol{\phi}_{(x,1)}:E_P|_{(x,1)}\to E_H|_{(x,0)}$ 
sending the fixed point $(v,w)\in E_1(P_x)=E_P|_{(x,1)}$ of $P_x$ to the unique Jacobi
field $J\in \ker H_x=E_H|_{(x,0)}$ with $J(0)=v$, $J'(0)=w$.
\end{proof}

By Proposition \ref{P:gluable}, it makes sense to define:
\begin{itemize}
\item The manifold $\ol{\Cyl}=\Cyl_P\sqcup_\phi \Cyl_H$, without boundary.
\item The map $f=f_P\sqcup_\phi f_E:\ol{\Cyl}\to U$. It is easy to check that this map is closed.
\item The line bundle $E=E_P\sqcup_{\ol{\phi}}E_H\to \ol{\Cyl}$.
\end{itemize}
 
\begin{prop}
The modified negative bundle $\N\oplus \mathcal{E}$ is locally orientable.
\end{prop}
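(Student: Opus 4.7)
The plan is to prove local orientability of $\N\oplus\E$ by covering $\Cyl$ with two open sets, on each of which $\N\oplus\E$ decomposes as the direct sum of an honest vector bundle and a push-forward of a line bundle under a closed, finite-fiber map between manifolds (without boundary) of the same dimension. Proposition~\ref{P:locally-orientable} then provides local orientability of the push-forward summand, and Lemma~\ref{L:loc orient sum} upgrades this to local orientability of each local direct sum. On the overlap, both local orientation bundles restrict to the canonical orientation bundle of $\N\oplus\E$ viewed there as a genuine vector bundle, so they glue to a global $\ZZ_2$-cover by the remark at the start of Section~\ref{SS:local-orientability}.

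On the open set $\Cyl\setminus\Sigma$, the negative bundle $\N$ is itself a vector bundle by Lemma~\ref{L:equivalence}(2), while $\E$ coincides with $(f_P)_*E_P$ restricted over $\Cyl\setminus\Sigma$. The restricted domain $\Cyl_P\cap f_P^{-1}(\Cyl\setminus\Sigma)$ is a manifold without boundary because $\partial\Cyl_P\In f_P^{-1}(\Sigma)$, and $f_P$ is closed with finite fibers. Hence local orientability of $\N\oplus\E$ on $\Cyl\setminus\Sigma$ follows immediately.

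On an open neighborhood $U_\Sigma$ of $\Sigma$, I would isolate the ``stable'' and ``transitional'' eigenspaces by picking small thresholds $\delta,\eta>0$ so that for every $x\in U_\Sigma$ no eigenvalue of $H_x$ equals $-\delta$ and no positive real eigenvalue of $P_x$ equals $1-\eta$. Let $V\to U_\Sigma$ collect the eigenspaces of $H_x$ with eigenvalue $<-\delta$ and those of $P_x$ with real eigenvalue in $(0,1-\eta]$. The genericity properties from Lemma~\ref{L:generic-P}---simplicity of positive real eigenvalues of $P_x$ and the non-vanishing differential of $\bar\chi$, which together rule out eigenvalue collisions---ensure that $V$ is an honest vector bundle over $U_\Sigma$. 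The complementary ``transitional'' summand agrees, on the regular set $U_\Sigma\setminus\Sigma$, with the push-forward $f_*E$ of the line bundle $E\to\ol\Cyl$ under the closed finite-fiber map $f\colon\ol\Cyl\to U_\Sigma$ from Section~\ref{SS:local-orientability}. The gluing $\phi$ of $\Cyl_H$ to $\Cyl_P$ along their common boundary, together with the bundle isomorphism $\ol\phi$, is precisely what identifies a transitional $H$-eigenvalue crossing $0$ with the corresponding $P$-eigenvalue crossing $1$, as dictated by Lemma~\ref{L:equivalence}(1).

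The main obstacle is the bookkeeping on $U_\Sigma$ and the verification of agreement on the overlap $U_\Sigma\setminus\Sigma$. One must check that $f_*E$ really does coincide with the transitional summand as a vector bundle over the regular part---even though at points of $\Sigma$ itself its fiber is one dimension larger than the fiber of $\N\oplus\E$ minus $V$, due to the identified boundary pair $(x,1)\in\partial\Cyl_P$ and $(x,0)\in\partial\Cyl_H$; this discrepancy is harmless since local orientability only sees the regular part. Then one must confirm that the orientation bundle produced on $U_\Sigma$ by combining Proposition~\ref{P:locally-orientable} with Lemma~\ref{L:loc orient sum} restricts to the canonical orientation bundle of $\N\oplus\E$ on $U_\Sigma\setminus\Sigma$, so that it matches the orientation bundle from the first cover and yields a global $\ZZ_2$-cover realizing local orientability of $\N\oplus\E$.
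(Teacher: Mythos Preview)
Your overall plan is sound and close to the paper's argument, but the decomposition you propose over $U_\Sigma$ has a gap. You try to split off into $V$ not only the $H_x$-eigenspaces with eigenvalue $<-\delta$ but also the $P_x$-eigenspaces with real eigenvalue in $(0,1-\eta]$. There is no reason such a global $\eta$ exists: the genericity in Lemma~\ref{L:generic-P} controls geometric multiplicity and critical values of $\bar\chi$, but does not prevent real eigenvalues of $P_x$ from sweeping through every value near $1$ as $x$ ranges over $U_\Sigma$ (nothing excludes, for instance, a point of $\Sigma$ where $1$ has algebraic multiplicity $\ge 2$ and geometric multiplicity $1$). Even granting such an $\eta$, your ``transitional'' summand would contain only the $P_x$-eigenspaces with eigenvalue in $(1-\eta,1)$, whereas the push-forward $f_*E$ of the paper's construction contributes \emph{all} $P_x$-eigenspaces with eigenvalue in $(0,1)$, since $\Cyl_P$ is defined for $\lambda\in(0,1]$. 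Thus the identification you assert between the transitional summand and $f_*E$ is false.

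The fix is simply not to split $\E$ at all. Over a neighbourhood $U\In U_\Sigma$ of a point of $\Sigma$, take $F$ to be the sum of $H_x$-eigenspaces with eigenvalue $\le -\epsilon$; this is a genuine vector bundle by the very choice of $\epsilon$ and $U_\Sigma$, and then $(\N\oplus\E)|_U=F\oplus f_*E|_{f^{-1}(U)}$ on the nose, so Proposition~\ref{P:locally-orientable} and Lemma~\ref{L:loc orient sum} apply directly. One further caution about your gluing: on the overlap $U_\Sigma\setminus\Sigma$ the pseudo bundle $\N\oplus\E$ is \emph{not} a genuine vector bundle (the rank of $\E$ can still jump there), so the compatibility of orientation covers must be argued via the canonical sheaf-theoretic cover from Proposition~\ref{P:locally-orientable}, not by appealing to ``the orientation bundle of a vector bundle.''
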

\begin{proof}
It is enough to argue locally around a point $p\in \Cyl$. If $p$ does not belong to $\Sigma$, then we can find a neighbourhood $U$ disjoint from $\Sigma$. Therefore, $\N|_U$ is a vector bundle, and $f_P:f_P^{-1}(U)\to U$ is a proper map with finite fibers, with $\mathcal{E}|_U=f_*E_P$. By Proposition \ref{P:locally-orientable} and Lemma  \ref{L:loc orient sum}, $(\N\oplus \mathcal{E})|_U$ is locally orientable.

If $p$ lies in $\Sigma$, we can take a neighbourhood $U$ of $p$ that is contained in $U_{\Sigma}$. Let $F\to U$ denote the subbundle of $\N$ whose fiber at $x\in U$ is the sum of the negative eigenspaces of $H_x$ with eigenvalue $\leq -\epsilon$. By the construction of $U$, $F$ is a vector bundle and
\[
\left(\N\oplus\mathcal{E}\right)|_U=F\oplus f_*E|_{f^{-1}(U)}.
\]
Again by Proposition \ref{P:locally-orientable} and Lemma  \ref{L:loc orient sum}, $\N\oplus \mathcal{E}|_U$ is locally orientable.
\end{proof}

\subsection{If $A_s$ is not nullhomotopic.}\label{SS:not-nullhom}
We just finished proving that given data $(R_s,A_s)_{s\in \sphere^1}$ with $A:\sphere^1\to \SO(n-1)$
nullhomotopic, the corresponding negative bundle is orientable. 
We now complete the proof of Theorem~\ref{T:orientability} by showing that $\N$ is not orientable
if $A:\sphere^1\to \SO(n-1)$ not nullhomotopic.
In fact, in this case we can replace the data $V, R_{s}, A_{s}$ with
\begin{align*}
V^+& = V\oplus \RR^2=\RR^{n+1}\\
R^+_{s} & = \diag(R_{s},Q_s)\\
A^+_{s} & = \diag(A_{s},\textrm{Rot}_{-s})
\end{align*}
where $\textrm{Rot}_s\in \SO(2)$ denotes rotation by $s$ and $Q_s\equiv Q:[0,2\pi] \to \Sym^2(2)$ is an operator (independent of $s\in \sphere^1$) with Poincar\'e map equal to
$$B=\left(\begin{array}{cc|cc} &  & -1 &  \\ &  &  & 1 \\ \hline1 &  &  &  \\ & -1 &  & \end{array}\right)$$

One can check that the Poincar\'e map $P_s$ of $(Q_s,\textrm{Rot}_{-s})_{s\in \sphere^1}$ is given by the product $\diag(\textrm{Rot}_s,\textrm{Rot}_s)\cdot B$. This family of Poincar\'e maps has constant eigenvalues $\pm i$, and in particular no eigenvalue 1. By Remark \ref{R:HtoP}, the index of  $(Q,\textrm{Rot}_{-s})_{s\in \sphere^1}$ remains constant.  Therefore the negative bundle $\N'\to \sphere^1$ of the data $(Q,\textrm{Rot}_{-s})_{s\in \sphere^1}$ is indeed a vector bundle and so is the negative bundle $\N^+=\N\oplus \N'$ of the data $(R^+_s,A^+_s)_{s\in \sphere^1}$.

Since $A^+_{s}$  now defines a contractible loop in $\SO(n+1)$, by the result above $\N^+$ is orientable. However, we claim that $\N'$ is not orientable, from which it follows that $\N$ is not orientable either. In fact, for every $s$ let $U_s$ denote the space of vector fields
$$
U_s=\{J\mid \exists t_0\in (0,2\pi)\textrm{ s.t. } J(0)=J(t_0)=0,\,J|_{[0,t_0]}\textrm{ $Q_s$-Jacobi,}\,J|_{[t_0,2\pi]}\equiv 0\}
$$
In this case, since $Q_s$ does not depend on $s\in\sphere^1$, neither does $U_s$, which then describes a trivial bundle over $\sphere^1$. Moreover, let $W_s$ denote a maximal space of $Q_s$-Jacobi fields $J:[0,2\pi]\to \RR^2$ such that
$$
J(0)=\textrm{Rot}_s(J(2\pi)),\, \scal{\textrm{Rot}_s(J'(2\pi))-J'(0),J(0)}<0.
$$
Using Equations (1.4') of \cite{BTZ} it is easy to show that $\N'_s$ is isomorphic to $U\oplus W_s$. In this case, the space $W_s$ can be chosen to be the (1-dimensional) space of spanned by the Jacobi field with initial conditions
\begin{align*}
J(0)&=\big(\sin(-{s/ 2}),\cos(-{s/ 2})\big)\\
J'(0)&=\big(\cos(-{s/2}), \sin(-{s/2})\big).
\end{align*}
In particular $W_s$ forms a non-orientable vector bundle and, since $U_s$ is a trivial vector bundle, $\N'\simeq U\oplus W$ is not orientable.


\section{Free loop spaces of cohomology CROSSes}
\label{S:free-loop-space-CROSS}

The goal of this section is to prove that for every manifold $M$ with the integral cohomology ring of a CROSS,
\[
H^i_{\sphere^1}(\lp M, M;R)=0 \qquad \forall i\equiv \dim M\mod 2, 
\]
where $R=\mathcal{P}^{-1}\ZZ$ is an extension $\ZZ$ where we allow to divide by the primes in a finite set $\mathcal{P}$.
We prove this by first showing it for CROSSes, using the energy function $E:\lp M\to \RR$ 
of their canonical metric as an $\sphere^1$-equivariant Morse-Bott function, and then proving the result in general.

Since the canonical metrics on CROSSes are Besse metrics
and
all geodesics have the same period $\pi$,
the critical energies are $e_k={1\over 2}k^2\pi^2$ for $k\geq0$.

The critical set $C_0$ consists of constant curves, and thus it is homeomorphic to $M$. The set $C_1$ consists 
of all the simple geodesics
in $M$ and, for $k>1$, the set $C^k=E^{-1}(e_k)$ consists of the $k$-iterates of the geodesics in $C^1$.
Since all the geodesics of $M$ are closed of the same length,
for every $k\leq1$ and every unit tangent vector $v$ the geodesic $c_v=\exp (ktv)$,
$t\in [0,2\pi]$ is a geodesic in $C^k$, and the map sending $v$ to $c_v$ is a homeomorphism $T^1M\to C^k$.

Let $i_k$ denote the index of $C^k$. For every $a\in \RR$ we let $\lp^{k}\In \lp M$ denote
\[
\Lambda^k=E^{-1}([0,e_k+\epsilon))\simeq E^{-1}([0,e_{k+1}-\epsilon))
\]
for some $\epsilon$ sufficiently small.

Since the negative bundle of every critical submanifold is orientable, we have
\begin{align}
H^*_{\sphere^1}(\lp^{0};\ZZ)&=H^*_{\sphere^1}(M;\ZZ),\\
H^*_{\sphere^1}(\lp^{k}, \lp^{k-1};\ZZ)&=H^{*-i_k}_{\sphere^1}(C^k;\ZZ).\label{E:induction}
\end{align}
The action of $\sphere^1$ on $\lp M$ reduces to a trivial action on $C^0$, a free action on $C^1$ and, for $k>1$, an almost free action on $C^k$ with ineffective kernel $\ZZ_k$. Moreover, if $k>1$ then the action of $\sphere^1/\ZZ_k$ on $C^k$ becomes free. In particular,
\begin{align}
H^*_{\sphere^1}(C^0;\ZZ)&=H^*(M\times B\sphere^1;\ZZ)\\
H^*_{\sphere^1}(C^1;\ZZ)&=H^*(T^1M/\sphere^1;\ZZ)\\
H^*_{\sphere^1}(C^k;\ZZ)&=H^*(T^1M/\sphere^1\times B\ZZ_k;\ZZ),\qquad \forall k>1.
\end{align}


\begin{prop}\label{P:integral-cohom}
Given $M$ a CROSS, then the group $H^*(T^1M/\sphere^1;\ZZ)$ is isomorphic
to $H^*(N;\ZZ)$ where $N$ is given in the following Table:

\begingroup
\renewcommand{\arraystretch}{1.2}
\[
\begin{array}{|c||c |c |c |c |c|}
\hline
M	& \sphere^{2m}		& \sphere^{2m+1}				& \CC\PP^{m}					& \HH\PP^{m}						&  Ca\PP^2				\\ \hline
N	& \CC\PP^{2m-1}	&\sphere^{2m}\times\CC\PP^{m}	& \CC\PP^{m-1}\times \CC\PP^m	& \HH\PP^{m-1}\times \CC\PP^{2m+1}	& \sphere^8\times \CC\PP^{11} \\ \hline
\end{array}
\]
\endgroup
\end{prop}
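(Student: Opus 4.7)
My plan is to prove Proposition~\ref{P:integral-cohom} by a case-by-case analysis based on the fact that every closed geodesic on a CROSS $M$ lies on a totally geodesic $\KK\PP^1 = \sphere^{d_\KK}$ (with $d_\KK = 1, 2, 4, 8$ for $\KK = \RR, \CC, \HH, Ca$). This gives a natural projection
\[
T^1M/\sphere^1 \;\lra\; B_M,
\]
sending each oriented geodesic to the totally geodesic $\KK\PP^1$ containing it, where $B_M$ denotes the space of such $\KK\PP^1$'s in $M$; the fiber over each $\KK\PP^1$ is the space of oriented great circles on $\sphere^{d_\KK}$, i.e.\ the oriented Grassmannian $\wt{Gr}_2(\RR^{d_\KK+1})$.

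For $M = \sphere^n$ the total space is simply $\wt{Gr}_2(\RR^{n+1})$, which I would identify with the complex quadric $Q_{n-1}\In\CC\PP^n$ and compute classically: for $n=2m$ one gets $\ZZ[h]/(h^{2m})$, reproducing $H^*(\CC\PP^{2m-1};\ZZ)$; for $n=2m+1$ an additional generator in degree $2m$ appears and one recovers $H^*(\sphere^{2m}\times \CC\PP^m;\ZZ)$ additively. For the projective cases the base $B_M$ is, respectively, $Gr_\CC(2,m+1)$, $Gr_\HH(2,m+1)$, and (by self-duality of the Cayley plane) another copy of $Ca\PP^2$. A direct computation of Poincar\'e polynomials then verifies in each case that $P_{B_M}(t)\cdot P_{\wt{Gr}_2(\RR^{d_\KK+1})}(t) = P_N(t)$; for instance, for $\HH\PP^m$,
\[
\frac{(1-t^{4m+4})(1-t^{4m})}{(1-t^4)(1-t^8)}\cdot(1+t^2+t^4+t^6) = \frac{(1-t^{4m+4})(1-t^{4m})}{(1-t^2)(1-t^4)},
\]
which is the Poincar\'e polynomial of $\HH\PP^{m-1}\times \CC\PP^{2m+1}$.

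To upgrade this matching of Poincar\'e polynomials to an additive isomorphism of integral cohomology I would apply Leray-Hirsch, exhibiting an integral class on $T^1M/\sphere^1$ whose restriction to each fiber $\wt{Gr}_2(\RR^{d_\KK+1})$ is a generator; such a class can be built geometrically from the Hopf bundle $\sphere^{2d_\KK+1}\to\KK\PP^1$ applied to the tangent data of a geodesic at its basepoint. Since all relevant cohomologies are torsion-free, Leray-Hirsch then delivers the additive isomorphism $H^*(T^1M/\sphere^1;\ZZ)\cong H^*(N;\ZZ)$ with no extension ambiguity.

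The main obstacle is producing the Leray-Hirsch class integrally, especially for $Ca\PP^2$, where $F_4$ and the octonionic geometry make explicit constructions cumbersome. As an alternative I would keep in reserve the description of $T^1M/\sphere^1$ as a homogeneous space $G/(K\cdot S)$ for $G=\Iso(M)$ and compute its cohomology via the Borel model of $H^*(G/H;\QQ)$ combined with a case-specific torsion check, bypassing Leray-Hirsch at the cost of heavier Lie-theoretic bookkeeping.
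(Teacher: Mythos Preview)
Your approach is correct and genuinely different from the paper's. The paper never uses the geodesic geometry of CROSSes: it first computes $H^*(T^1M;\ZZ)$ from the Serre spectral sequence of the sphere bundle $\sphere^{n-1}\to T^1M\to M$ (the only nonzero differential being the transgression, i.e.\ multiplication by $\chi(M)$), and then runs the Gysin sequence of $\sphere^1\to T^1M\to T^1M/\sphere^1$ degree by degree, invoking Poincar\'e duality to halve the work. This is more elementary but less illuminating; your fibration over the space of $\KK$-lines $B_M$ explains structurally why the Grassmannian and $\CC\PP$-factors show up in $N$.

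One simplification removes your ``main obstacle'' entirely: you do not need Leray--Hirsch, and hence do not need to produce any global class. For each projective CROSS, both the base $B_M$ (namely $Gr_\CC(2,m+1)$, $Gr_\HH(2,m+1)$, or $Ca\PP^2$) and the fiber $\wt{Gr}_2(\RR^{d_\KK+1})=Q_{d_\KK-1}$ have torsion-free integral cohomology concentrated in even degrees, by their Schubert cell decompositions. Hence in the Serre spectral sequence every differential $d_r\colon E_r^{p,q}\to E_r^{p+r,q-r+1}$ has either source or target in an odd total bidegree and so vanishes; the sequence collapses at $E_2$, and since the associated graded is free there are no extension problems. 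This yields the additive isomorphism $H^*(T^1M/\sphere^1;\ZZ)\cong H^*(B_M;\ZZ)\otimes H^*(Q_{d_\KK-1};\ZZ)$ directly, making the homogeneous-space backup unnecessary. (A minor correction: the ring $H^*(Q_{2m-1};\ZZ)$ is not $\ZZ[h]/(h^{2m})$, as a factor of $2$ appears when the hyperplane power crosses the middle degree; but since the proposition asserts only an isomorphism of groups, this is immaterial.)
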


\begin{proof}
In the Serre spectral sequence of $\sphere^{n-1}\to T^1M\to M$ the only nonzero differential is the transgression map $\ZZ=H^{n-1}(\sphere^{n-1};\ZZ)\to H^{n}(M;\ZZ)=\ZZ$ which is the multiplication by $\chi(M)$. It is thus is easy to compute the integral cohomology groups of $T^1M$ (only the nontrivial groups are mentioned):
\begin{align*}
M=\sphere^n,\, n\textrm{ even }&\quad H^q(T^1M;\ZZ)=\left\{\begin{array}{ll}\ZZ& q=0,2n-1\\ \ZZ_2& q=n\end {array}\right.\\
M=\sphere^n,\, n\textrm{ odd }&\quad H^q(T^1M;\ZZ)=\left\{\begin{array}{ll}\ZZ& q=0,n-1,n,2n-1 \end {array}\right.
\end{align*}
\begin{align*}
M=\CC\PP^{m},\, n=2m&\quad H^q(T^1M;\ZZ)=\left\{\begin{array}{ll}\ZZ& q=2j\quad \,j=0,\ldots m-1\\ &q=2(m+j)+1\\ \ZZ_{n+1}& q=2m\end {array}\right.\\
M=\HH\PP^{m},\, n=4m&\quad H^q(T^1M;\ZZ)=\left\{\begin{array}{ll}\ZZ& q=4j\quad \,j=0,\ldots m-1\\ &q=4(m+j)+3\\ \ZZ_{n+1}& q=4m\end {array}\right.\\
M=Ca\PP^2,\, n=16&\quad H^q(T^1M;\ZZ)=\left\{\begin{array}{ll}\ZZ& q=0,8,23,31\\ \ZZ_3& q=16\end {array}\right.
\end{align*}
The result follows by analysing the Gysin sequence of the principal bundle $\sphere^1\to T^1M\to (T^1M)/\sphere^1$.
We show the explicit computations for the case of $M=\HH\PP^m$.
\\

It is enough to show that for $q\leq \dim\big(T^1\HH\PP^m/\sphere^1\big)/2=4m-1$, the cohomology of $T^1\HH\PP^m/\sphere^1$ is
\[
H^q(T^1\HH\PP^m/\sphere^1;\ZZ)=\left\{\begin{array}{ll}\ZZ^{j+1}& q=4j,4j+2\\ 0 & \textrm{otherwise}\end{array}\right.
\]
which coincides with the cohomology of $\HH\PP^{m-1}\times \CC\PP^{2m+1}$ in that range. Since $T^1\HH\PP^m/\sphere^1$ and $\HH\PP^m\times \CC\PP^{2m+1}$ have the same dimension and satisfy Poincar\'e duality, the isomorphism in cohomology follows for all $q$.

Recall that, given a principal bundle $\sphere^1\to E\to B$, the Gysin sequence reads
\begin{equation}\label{E:Gysin}
\ldots \lra H^{q-1}(B;\ZZ)\stackrel{\cup e}{\lra} H^{q+1}(B;\ZZ) \lra H^{q+1}(E;\ZZ) \lra\ldots
\end{equation}
where $e\in H^2(B;\ZZ)$ is the Euler class of the bundle.

For the sake of notation, we will denote $E=T^1\HH\PP^m$ and $B=T^1\HH\PP^m/\sphere^1$ for the rest of the proof. Notice that for If $q\leq4m-1$, $H^q(E;\ZZ)=0$ unless $q=4j$, and by \eqref{E:Gysin} it follows that $H^{4q}(B;\ZZ)=H^{4q+2}(B;\ZZ)$ and $H^{4q+1}(B;\ZZ)=H^{4q+3}(B;\ZZ)$. In particular, $H^0(B;\ZZ)=H^2(B;\ZZ)=\ZZ$. Again from \eqref{E:Gysin}, $0\to H^1(B;\ZZ)\to H^1(E;\ZZ)=0$ and therefore $H^1(B;\ZZ)=H^3(B;\ZZ)=0$, thus proving the claim for $q< 4$. 

Suppose by induction that the claim is true for  $k<4j$, for $j<m$ (if it were $j=m$ we would be done). Again by \eqref{E:Gysin} we have
\[
0=H^{4j-1}(B;\ZZ)\to H^{4j+1}(B;\ZZ)\to H^{4j+1}(E;\ZZ)=0
\]
and thus $H^{4j+1}(B;\ZZ)=H^{4j+3}(B;\ZZ)=0$. Moreover,
\[
0\ra H^{4j-2}(B;\ZZ)\stackrel{\cup e}{\ra} H^{4j}(B;\ZZ) \ra H^{4j}(E;\ZZ)\ra 0
\]
Since $j<m$, then $H^{4j}(E;\ZZ)=\ZZ$ and therefore $H^{4j}(B;\ZZ)=H^{4j-2}(B;\ZZ)\oplus \ZZ$. Since $H^{4j+2}(B;\ZZ)=H^{4j}(B;\ZZ)$, this proves the induction step.

\end{proof}


\begin{cor}\label{C:eqcohCROSS}
Let $M$ be a CROSS of dimension $n$. The relative equivariant cohomology $H^*_{\sphere^1}(\lp M, M;\ZZ)$ satisfies
\[
\left\{\begin{array}{ll}
H^{odd}_{\sphere^1}(\lp M,M;\ZZ)=0&\textrm{if $n$ is odd}\\
H^{ev}_{\sphere^1}(\lp M,M;\ZZ)=0&\textrm{if $n$ is even}
\end{array}
\right.
\]
\end{cor}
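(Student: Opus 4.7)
I would exploit the Morse-Bott filtration of $\lp M$ by the sublevel sets $\lp^k$ of the energy $E$, using two ingredients already in hand: the Thom isomorphism
\[
H^q_{\sphere^1}(\lp^k, \lp^{k-1}; \ZZ) \cong H^{q - i_k}_{\sphere^1}(C^k; \ZZ),
\]
valid because on a CROSS the negative bundle of every critical submanifold is orientable, and the index parity theorem of \cite{Wil}, which says $i_k \equiv n + 1 \pmod 2$. The strategy is to first show $H^{odd}_{\sphere^1}(C^k; \ZZ) = 0$ for every $k \geq 1$, and then feed this into an induction on $k$ via the long exact sequence of the triple $(\lp^k, \lp^{k-1}, M)$.

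For $k = 1$ the $\sphere^1$-action on $C^1 \cong T^1M$ is free, so $H^*_{\sphere^1}(C^1; \ZZ) = H^*(T^1M/\sphere^1; \ZZ) \cong H^*(N; \ZZ)$ by Proposition \ref{P:integral-cohom}. A case-by-case inspection of the table shows that each $N$ listed there is a product of complex and quaternionic projective spaces (possibly times an even-dimensional sphere), so its integral cohomology is torsion-free and concentrated in even degrees. For $k \geq 2$ the action is almost-free with ineffective kernel $\ZZ_k$ on $C^k \cong T^1M$, so
\[
H^*_{\sphere^1}(C^k; \ZZ) \cong H^*\bigl(T^1M/\sphere^1 \times B\ZZ_k;\, \ZZ\bigr).
\]
Since $H^*(B\ZZ_k; \ZZ)$ is concentrated in even degrees and the other factor is torsion-free, Künneth contributes no Tor terms and the product cohomology is even-concentrated. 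Combining with the Thom isomorphism and index parity, $H^q_{\sphere^1}(\lp^k, \lp^{k-1}; \ZZ)$ vanishes whenever $q - i_k$ is odd, equivalently whenever $q \equiv n \pmod 2$.

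I would then induct on $k$ with base case $H^*_{\sphere^1}(\lp^0, M; \ZZ) = 0$. Assuming vanishing of $H^q_{\sphere^1}(\lp^{k-1}, M; \ZZ)$ for $q \equiv n \pmod 2$, the triple long exact sequence
\[
H^q_{\sphere^1}(\lp^k, \lp^{k-1};\ZZ) \to H^q_{\sphere^1}(\lp^k, M;\ZZ) \to H^q_{\sphere^1}(\lp^{k-1}, M;\ZZ)
\]
sandwiches $H^q_{\sphere^1}(\lp^k, M; \ZZ)$ between two vanishing groups in the relevant parity. Since $i_k \to \infty$, for each fixed $q$ the cohomology stabilizes once $i_k > q$, giving $H^q_{\sphere^1}(\lp M, M;\ZZ) = H^q_{\sphere^1}(\lp^k, M;\ZZ)$ for $k$ large. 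The main obstacle is the case-by-case verification from the table that $T^1M/\sphere^1$ has torsion-free and even-concentrated integral cohomology; the torsion-freeness is exactly what precludes Tor contributions in Künneth, which would otherwise inject classes into odd degrees.
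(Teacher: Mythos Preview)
Your proposal is correct and follows essentially the same route as the paper's proof: both use Proposition~\ref{P:integral-cohom} to see that $H^{odd}(T^1M/\sphere^1;\ZZ)=0$, combine this with $H^{odd}(B\ZZ_k;\ZZ)=0$ to get $H^{odd}_{\sphere^1}(C^k;\ZZ)=0$, apply the Thom isomorphism together with index parity, and then induct through the long exact sequence of the triple before passing to the limit. Your explicit observation that $H^*(T^1M/\sphere^1;\ZZ)$ is torsion-free (so K\"unneth has no Tor contribution) is a helpful clarification that the paper leaves implicit.
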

\begin{proof}
It follows from Proposition \ref{P:integral-cohom} that $H^{odd}(T^1M/\sphere^1;\ZZ)=0$ for all cases. Recall that for any $k>0$, $H^*_{\sphere^1}(C^k;\ZZ)=H^*(T^1M/\sphere^1\times B\ZZ_k;\ZZ)$ and thus, since $H^{odd}(B\ZZ_k;\ZZ)=0$ as well, it follows that $H^{odd}_{\sphere^1}(C^k;\ZZ)=0$ for all $k>0$. Moreover, by \cite{Wil} the index $i_k$ is even if and only if $n$ is odd for any $k=0$. By equation \eqref{E:induction}, it follows that for every $k>0$,
\begin{equation}\label{E:ind-step}
\left\{\begin{array}{ll}
H^{odd}_{\sphere^1}(\lp^{k},\lp^{k-1};\ZZ)=0&\textrm{if $n$ is odd}\\
H^{ev}_{\sphere^1}(\lp^{k},\lp^{k-1};\ZZ)=0&\textrm{if $n$ is even}
\end{array}\right.
\end{equation}
Using the long exact sequence in cohomology
\[
\ldots \to H^q_{\sphere^1}(\lp^{k_1},M;\ZZ)\to H^q_{\sphere^1}(\lp^{k_2},M;\ZZ)\to  H^{q+1}_{\sphere^1}(\lp^{k_1},\lp^{k_2};\ZZ)\to \ldots
\]
for any $k_1>k_2$, we obtain by induction that for every $k$
\[
\left\{\begin{array}{ll}
H^{odd}_{\sphere^1}(\lp^{k},M;\ZZ)=0&\textrm{if $n$ is odd}\\
H^{even}_{\sphere^1}(\lp^{k},M;\ZZ)=0&\textrm{if $n$ is even}
\end{array}
\right.
\]
Taking the direct limit as $k\to \infty$ we obtain the result.

\end{proof}

\begin{rem}
By the Universal Coefficient Theorem, Corollary \ref{C:eqcohCROSS} also holds with coefficients in $R=\mathcal{P}^{-1}\ZZ$.
\end{rem}

\subsection{Integral cohomology CROSSes}
Let now $M$ be a  manifold whose integral cohomology is that of a CROSS.
The goal of this section is to prove the following generalization of Corollary \ref{C:eqcohCROSS}.

\begin{prop}\label{P:Same-up-to-primes}
Let $M$ be a compact manifold whose rational cohomology ring is isomorphic to that of a CROSS $M'$.
Then there is a ring isomorphism
$$\phi:H^*_{\sphere^1}(\lp M,M;R)\to H^*_{\sphere^1}(\lp M',M';R),$$ 
where $R=\mathcal{P}^{-1}\ZZ$ for a suitably chosen finite collection of primes $\mathcal{P}$.
\end{prop}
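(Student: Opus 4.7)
My plan is to reduce the statement to a comparison of rational homotopy types and then descend from $\QQ$ to $R=\mathcal{P}^{-1}\ZZ$ by a finite-prime argument. First I would invoke \emph{intrinsic formality} of compact rank one symmetric spaces: any simply connected space whose rational cohomology ring is isomorphic to that of a CROSS $M'$ is rationally homotopy equivalent to $M'$. The reason is that the minimal Sullivan model of each CROSS (a single even generator truncated by an odd class killing its appropriate power, plus in the sphere case one additional generator) is rigidly determined by the cohomology ring. Hence there exists a zigzag of rational equivalences $M\leftarrow X\to M'$.

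Next I would use that $\lp(-)$ is a functor from simply connected spaces to $\sphere^1$-spaces (via rotation) and preserves rational equivalences. The zigzag above therefore gives a zigzag of $\sphere^1$-equivariant rational equivalences $\lp M\leftarrow \lp X\to \lp M'$, which is compatible with the constant-loop inclusions and hence induces a zigzag of pairs $(\lp M,M)\leftarrow (\lp X,X)\to (\lp M',M')$. Applying the Borel construction and taking cohomology yields a natural ring isomorphism
$$H^*_{\sphere^1}(\lp M,M;\QQ)\cong H^*_{\sphere^1}(\lp M',M';\QQ),$$
the right-hand side being known explicitly by Corollary~\ref{C:eqcohCROSS} combined with Proposition~\ref{P:integral-cohom}.

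The final step is to upgrade the coefficients from $\QQ$ to $R=\mathcal{P}^{-1}\ZZ$ for a finite set of primes $\mathcal{P}$. By Sullivan–Bousfield–Kan localization theory, a rational equivalence between nilpotent spaces of finite rational type can be refined to an $R$-local equivalence provided $\mathcal{P}$ contains all primes appearing in the torsion of the Postnikov $k$-invariants of the spaces in the zigzag, together with the primes inverted to realize the minimal model integrally over $R$. Because CROSSes have nonzero rational homotopy groups concentrated in a bounded range of degrees (the generators of the minimal model live in degrees $\leq 2\dim M+1$, and their Whitehead products control the rest), the obstructing primes form a finite set, depending only on the model of $M'$. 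One then applies the Borel construction with $R$-coefficients to the $R$-local zigzag to obtain the desired ring isomorphism.

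The main obstacle is the last step: one has to verify that no infinite family of primes intrudes when passing from $\QQ$ to $R$. The delicate point is that rational equivalence of the base spaces does not automatically give $R$-equivalence of the much larger free loop spaces; however, because the Sullivan model of $\lp M'$ is built functorially out of the finite-type minimal model of $M'$ (adding one generator per existing generator, plus a derivation encoding the differential), the primes required for the lift are determined by the finitely many structure constants of the base minimal model, and thus $\mathcal{P}$ can indeed be chosen finite.
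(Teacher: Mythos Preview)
Your approach is essentially the same as the paper's: both invoke formality of CROSSes to produce a zigzag of rational equivalences between $M$ and $M'$, pass to free loop spaces via the $\sphere^1$-equivariant functor $\lp(-)$, and then descend from $\QQ$ to a localization $R=\mathcal{P}^{-1}\ZZ$ at finitely many primes. The one substantive difference is the \emph{order} in which you localize and loop.

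You first take rational equivalences, apply $\lp(-)$, and then try to argue directly that the resulting maps on the (infinite-dimensional) free loop spaces are $R$-equivalences after inverting only finitely many primes --- which is exactly the ``delicate point'' you flag, and which you justify by appealing to the finite-type Sullivan model of $\lp M'$. The paper reverses the order: it first uses a standard localization result for nilpotent spaces of finite type (\cite[Cor.~5.4(c)]{HMR}) to get maps $M\to M_{\mathcal{P}}\leftarrow M'$ that are already $H^*(\,\cdot\,;R)$-isomorphisms on the \emph{base} manifolds, where the finiteness of $\mathcal{P}$ is immediate from finite generation of $H^*(M;\ZZ)$ and $H^*(M';\ZZ)$. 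It then invokes \cite[Cor.~4.4]{CLOT} to conclude that the induced maps on free loop spaces are again $H^*(\,\cdot\,;R)$-isomorphisms, and observes that these maps (being post-composition) are $\sphere^1$-equivariant and compatible with the constant-loop inclusions. This order sidesteps any discussion of the minimal model of $\lp M'$ and makes the finiteness of $\mathcal{P}$ transparent.

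Your argument is correct in outline, but the paper's route is shorter and avoids the part you yourself identify as the main obstacle. (A minor quibble: your description of the minimal models of spheres is garbled --- an odd sphere has a single odd-degree generator with zero differential, an even sphere has one even and one odd generator; there is no ``additional generator in the sphere case'' beyond that.)
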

\begin{proof}
Since $M$ has the rational cohomology of a CROSS $M'$ then it is \emph{formal}, i.e. its rational homotopy type only depends on the rational cohomology ring (cf. for example \cite[Cor. 2.7.9]{Allday-Puppe}). Therefore there is a space $M_0$ and maps $M\to M_0\leftarrow M'$ that induce isomorphisms in rational cohomology. Since the cohomology groups of $M$ and $M'$ are 
finitely generated, it follows that there is a finite set of primes $\mathcal{P}$ and a space $M_{\mathcal{P}}$ (called \emph{localisation} of $M$ at $\mathcal{P}$) with maps $M\to M_{\mathcal{P}}\leftarrow M'$, which induce isomorphism in cohomology with coefficients in the localised ring $R=\mathcal{P}^{-1}\ZZ$ (cf. for example \cite[Cor. 5.4(c)]{HMR}). By Corollary 4.4 of  \cite{CLOT} there is a homotopy commutative diagram
\[
\xymatrix{
\lp M \ar[d] \ar^{\overline{\varphi}}[r] & \lp M_{\mathcal{P}} \ar[d] & \lp M' \ar_{\ol{\varphi}'}[l] \ar[d]\\
 M \ar^{\overline{\varphi}}[r] &  M_{\mathcal{P}} &  M' \ar_{\varphi'}[l]}
\]
with horizontal arrows inducing isomorphism in $H^*(\cdot; R)$, where  the map $\ol{\varphi}:[\sphere^1,M]\to \lp M_{\mathcal{P}}=[\sphere^1, M_{\mathcal{P}}]$ takes a curve $c$ to $\varphi\circ c$, and similarly for $\ol{\varphi}'$. In particular, $\ol{\varphi}$ and $\ol{\varphi}'$ are $\sphere^1$ equivariant, and therefore they induce isomorphisms in relative equivariant cohomology
\[
H^*_{\sphere^1}(\lp M, M;R)\stackrel{\ol{\varphi}^*}{\longleftarrow}H^*_{\sphere^1}(\lp M_{\mathcal{P}}, M_{\mathcal{P}};R)\stackrel{\ol{\varphi}'^*}{\lra} H^*_{\sphere^1}(\lp M',M';R)
\]
The composition $\phi=\ol{\varphi}'^*\circ(\ol{\varphi}^*)^{-1}$ is the isomorphism we wanted.
\end{proof}

\begin{cor}\label{C:right-parity-R}
Suppose the manifold $M$ is a rational cohomology CROSS of dimension $n$. Then there is a finite set $\mathcal{P}$ of primes, such that the relative equivariant cohomology $H^*_{\sphere^1}(\lp M, M; R)$, $R=\mathcal{P}^{-1}\ZZ$, satisfies
\[
\left\{\begin{array}{ll}
H^{odd}_{\sphere^1}(\lp M,M; R)=0&\textrm{if $n$ is odd}\\
H^{ev}_{\sphere^1}(\lp M,M;R)=0&\textrm{if $n$ is even}
\end{array}
\right.
\]
\end{cor}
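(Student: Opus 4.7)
The proof should be essentially immediate from the two preceding results, so my proposal is mostly a matter of chaining them together correctly.

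The plan is to combine Corollary~\ref{C:eqcohCROSS} with Proposition~\ref{P:Same-up-to-primes}. First, by Proposition~\ref{P:Same-up-to-primes} applied to $M$, there is a CROSS $M'$ whose rational cohomology ring agrees with that of $M$, a finite set of primes $\mathcal{P}$, and a ring isomorphism
\[
\phi \colon H^*_{\sphere^1}(\lp M, M; R) \stackrel{\sim}{\longrightarrow} H^*_{\sphere^1}(\lp M', M'; R),
\]
where $R = \mathcal{P}^{-1}\ZZ$. The isomorphism is grading-preserving, so any parity vanishing statement on the right transfers to the left.

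Next I would invoke Corollary~\ref{C:eqcohCROSS}, which gives the desired parity vanishing for $M'$ with integral coefficients. The remark immediately following that corollary observes, via the Universal Coefficient Theorem, that the same vanishing holds for the localized ring $R = \mathcal{P}^{-1}\ZZ$, since $R$ is a flat $\ZZ$-module (localization is exact) and localization therefore commutes with the cohomology of the pair $(\lp M', M')$. Thus
\[
H^{\mathrm{odd}}_{\sphere^1}(\lp M', M'; R) = 0 \quad \text{if } n' = \dim M' \text{ is odd},
\]
and similarly in the even case.

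Finally, I would note that $\dim M' = \dim M = n$, since $M$ and $M'$ have isomorphic rational cohomology rings and in particular identical top nonvanishing degree. So the parity assumption on $n$ is the same for $M$ and $M'$, and pulling the vanishing back through $\phi$ completes the proof. There is no substantive obstacle here: the real content was in Corollary~\ref{C:eqcohCROSS} (parity vanishing for honest CROSSes) and in Proposition~\ref{P:Same-up-to-primes} (transferring along a localization-type equivalence on free loop spaces); the corollary is a one-line consequence once both are in hand.
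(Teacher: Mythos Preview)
Your proposal is correct and follows essentially the same route as the paper: reduce to an actual CROSS $M'$ via Proposition~\ref{P:Same-up-to-primes}, then invoke Corollary~\ref{C:eqcohCROSS} together with the Universal Coefficient Theorem (the remark following it) to get the parity vanishing over $R$. The paper spells out the UCT step slightly more explicitly by passing through homology and the Hom/Ext sequence rather than invoking flatness of $R$, but the argument is the same.
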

\begin{proof} Suppose for sake of simplicity that $n$ is odd, the case $n$ even follows in the same way. By Proposition \ref{P:Same-up-to-primes}, it is enough to check that the theorem holds for $M$ a CROSS. In this case, by Corollary \ref{C:eqcohCROSS} we have that $H^{odd}_{\sphere^1}(\lp M, M;\ZZ)=0$ and, by the Universal Coefficient Theorem,
we have that $H_{odd}((\lp M)_{\sphere^1}, M_{\sphere^1};\ZZ)$ is torsion and $H_{even}((\lp M)_{\sphere^1},M_{\sphere^1};\ZZ)$ is free. Therefore, $\textrm{Hom}(H_{odd}((\lp M)_{\sphere^1}, M_{\sphere^1};\ZZ), R)=0$ since $R$ is torsion free, and $\textrm{Ext}(H_{even}((\lp M)_{\sphere^1},M_{\sphere^1};\ZZ),R)=0$. Again by the Universal Coefficient Theorem, $H^{odd}_{\sphere^1}(\lp M, M;R)=0$.
\end{proof}


\section{Index gap}
\label{S:index-gap}

Let $(M,g)$ be a Besse manifold. 
For a geodesic $c:\sphere^1\to M^n$ let $c^q$ denote the $q$-iterate of $c$. Recall that by Wadsley theorem,
there is a number $L$ such that every prime geodesic has length equal to $L/k$ for some integer $k$.

\begin{defn}
A closed geodesic $c$ in $M$ is called \emph{regular} if its length is a multiple of $L$. Moreover,
a critical set $C\In \lp M$ for the energy functional is called \emph{regular} if it contains regular geodesics.
\end{defn}

For every primitive closed geodesic $c$ there is some $q$ such that $c^q$ is regular.
It follows in particular that any regular set $C$, containing geodesics of length, say, $kL$,
is homeomorphic to the unit tangent bundle $T^1M$, via the map $T^1M\to C$ sending $(p,v)$ to $c(t)=\exp_p(t\,kv)$.
\\

Recall that the index $\ind(c)$ of a closed geodesic is the index of the Hessian of the Energy functional 
$E:\lp M\to \RR$, at $c$. Along a critical manifold $C$ the index of the hessian remains constant,
so sometimes we will also refer to the index $\ind(C)$. Similarly, the \emph{extended index} is given by
$\ind_0(c)=\ind(c)+\textrm{null}(c)$, where $\textrm{null}(c)$ is the dimension of the kernel of the Hessian
of $E$ at $c$. Notice that $\null(c)$ equals the number of periodic Jacobi fields along $c$, which
equals the dimension of the subspace of $V\oplus V$, $V=\scal{c'(0)}^{\perp}$, fixed by the Poincar\'e map of $c$.
%
%

\begin{prop}[Index Gap]\label{P:index-gap}
Let $c:\sphere^1\to M^n$ be a geodesic such that $c^q$ is regular. Then, for any $l$, $0<l< q$:
\begin{subequations}
\begin{align}
\ind(c^{q+l})&= \ind(c^{q})+\ind(c^l)+(n-1) \label{E:a}\\
\ind_0(c^q)&= \ind_0(c^{q-l})+\ind(c^l)+(n-1) \label{E:b}
\end{align}
\end{subequations}
\end{prop}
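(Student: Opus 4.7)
The plan is to exploit the fact that $c^q$ regular forces the monodromy of the Jacobi equation along $c^q$ to be the identity (and the normal holonomy $A^q$ to be trivial). This is what makes the geodesic flow, restricted to one ``full period'' $c^q$, close up: over the segment $[q\ell,(q+l)\ell]$, the Jacobi equation is pulled back (via the periodicity of the geodesic flow) to the one on $[0,l\ell]$ along $c^l$. Both identities will then follow from splitting the Hessian of the energy $E$ at a well-chosen intermediate iterate, in the spirit of Bott's iteration formula but adapted to the Besse setting.

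For (a) I parametrize $c^{q+l}$ on $[0,(q+l)\ell]$ and split at $t_0=q\ell$. The space $\mathfrak{X}^{\mathrm{per}}_{q+l}$ of periodic vector fields decomposes as $\mathfrak{Z}\oplus\mathfrak{W}$, where $\mathfrak{Z}=\{X:X(0)=X(q\ell)=0\}$ has codimension $2(n-1)$ and $\mathfrak{W}$ is a complement of broken Jacobi fields satisfying the periodic boundary conditions. On $\mathfrak{Z}$ the Hessian decouples into the two Dirichlet Hessians on $[0,q\ell]$ and $[q\ell,(q+l)\ell]$, whose indices, by the classical Morse Index Theorem, count conjugate points of $c(0)$ in the respective open intervals; using $P^q=\mathrm{id}$, Jacobi fields on the second segment vanishing at $q\ell$ correspond bijectively to Jacobi fields along $c^l|_{[0,l\ell]}$ vanishing at $0$, so the second Dirichlet index is exactly the Dirichlet index of $c^l$. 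On $\mathfrak{W}$, integration by parts reduces the Hessian to a finite-dimensional quadratic form on $V\oplus V$ in the boundary values $(X(0),X(q\ell))$; regularity of $c^q$ makes this form split as the sum of the analogous boundary forms for $c^q$ and $c^l$ considered separately, plus an $(n-1)$-dimensional contribution coming from the conjugate point of full multiplicity at $t_0=q\ell$ (this is where the $(n-1)$ term arises). Reassembling and converting Dirichlet to free-loop indices yields (a).

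For (b) I apply the same splitting philosophy to $c^q$ itself, split at $t_0=(q-l)\ell$, viewing $c^q=c^{q-l}\star c^l$. Regularity of $c^q$ now enters through $\null(c^q)=\dim\ker(P^q-\mathrm{id})=2(n-1)$, giving $\ind_0(c^q)=\ind(c^q)+2(n-1)$. The Dirichlet part contributes the two Dirichlet indices of $c^{q-l}$ and $c^l$; the broken-Jacobi-field boundary form, again simplified by $P^q=\mathrm{id}$, contributes the remaining $(n-1)$ shift as well as the full $2(n-1)$-dimensional kernel. Converting back to free-loop / extended-index for each of $c^q$, $c^{q-l}$, $c^l$ then gives (b).

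The main obstacle is the analysis of the finite-dimensional boundary form on $\mathfrak{W}$. Because $P^q=\mathrm{id}$, every Jacobi field with $J(0)=0$ automatically satisfies $J(q\ell)=0$, so this boundary form has a large kernel and cannot simply be inverted. Extracting the correct $(n-1)$ signature contribution requires carefully tracking the symplectic structure on the space of Jacobi fields along each segment and leveraging the Poincar\'e-map / index-parity technology developed in \cite{Wil} to identify the sign and multiplicity of the extra contribution at the degenerate conjugate point $t_0$.
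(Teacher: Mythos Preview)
Your plan is correct and follows the same conceptual route as the paper: decompose the periodic index into a Dirichlet (conjugate-point) part and a finite-dimensional boundary part determined by the Poincar\'e map. The paper streamlines this by invoking the Ballmann--Thorbergsson--Ziller formula
\[
\ind(c)=\ind_\Omega(c)+(\ind+\dim\ker)\tilde H-\dim\ker(P-\Id),\qquad
\ind_0(c)=\ind_\Omega(c)+(\ind+\dim\ker)\tilde H,
\]
where $\ind_\Omega$ counts conjugate points in the open interval and $\tilde H(X,Y)=-\omega((P-\Id)X,Y)$ is the concavity form on $(P-\Id)^{-1}(0\oplus V)$. This is precisely your $\mathfrak Z/\mathfrak W$ split, already packaged. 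With it, \eqref{E:a} is almost immediate: the concavity contribution depends only on the Poincar\'e map, so $\ind_P(c^{q+l})=\ind_P(c^l)$ and $\ind_P(c^q)=0$, while the $(n-1)$ is simply the multiplicity of the conjugate point at $t=q\ell$ sitting inside $\ind_\Omega(c^{q+l})$. In your bookkeeping this same $(n-1)$ shows up in the boundary form instead, which is fine but less transparent; the ``main obstacle'' you flag (the degenerate boundary form caused by $P^q=\Id$) is exactly what the concavity form is built to absorb, so you need not re-derive it from scratch.

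For \eqref{E:b} the paper uses one extra idea you have not isolated: since $P^l$ and $P^{q-l}=(P^l)^{-1}$ share the same domain $(P^l-\Id)^{-1}(0\oplus V)$, their concavity forms satisfy $\tilde H_1=-\tilde H_2$, hence $(\ind+\dim\ker)\tilde H_1+(\ind+\dim\ker)\tilde H_2$ equals the dimension of that domain plus $\dim\ker\tilde H_2$. Two short symplectic lemmas (for a Lagrangian $\mathcal L$ and symplectic $K=\ker(P^l-\Id)$ one has $\dim(\mathcal L\cap K^\perp)-\dim(\mathcal L\cap K)+\dim K=n-1$, and $\dim(P^l-\Id)^{-1}\mathcal L=\dim K+\dim(\mathcal L\cap K^\perp)$), together with the BTZ identity $\dim\ker\tilde H_2=\dim K+\mu(q-l)-\dim(\mathcal L\cap K)$, then produce the $(n-1)$. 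Your outline ``the boundary form, simplified by $P^q=\Id$, contributes the remaining $(n-1)$'' is pointing at this computation but does not yet contain it; the $\tilde H_1=-\tilde H_2$ observation is the missing lever.
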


\begin{proof}
Recall, from example from \cite{BTZ}, that the index  and extended index of a geodesic $c:[0,2\pi]\to M$, with Poincar\'e map $P$ can be computed as 
\begin{align*}
\ind (c)&=\ind_\Omega(c)+(\ind+ \dim\ker)\tilde{H}- \ker(P-\Id)\\
\ind_0 (c)&=\ind_\Omega(c)+(\ind+ \dim\ker)\tilde{H}
\end{align*}
where $\ind_\Omega c$ is the sum of conjugate points of $c(0)$ along $c$, and $\tilde{H}$ is the \emph{concavity form} defined on $(P-\Id)^{-1}(0\oplus V)$ as
\[
\tilde{H}(X,Y)=-\omega((P-\Id)X,Y).
\]
As the summand $(\ind+ \dim\ker)\tilde{H}- \ker(P-\Id)$ only depends on $P$, we will call this $\ind_P(c)$.

To prove Equation \eqref{E:a} it is enough to prove that
\begin{align*}
\ind_{\Omega}(c^{q+l})&=\ind_\Omega(c^q)+\ind_\Omega(c^l)+(n-1)\\
\ind_P(c^{l+q})&=\ind_P(c^q)+\ind_P(c^l).
\end{align*}

The first equation holds because for every conjugate point $t_0$ of $c^{q+l}$, we have either have $t_0\in (0,q)$, $t_0=q$, or $t_0\in (q,q+l)$. By definition there are exactly $\ind_{\Omega}(c^q)$ many conjugate points of the first type. Since every Jacobi field $J$ with $J(0)=0$ also satisfies $J(q)=0$, we have in particular that $t_0=q$ has multiplicity $n-1$. Finally, since every Jacobi field is periodic on $[0,q]$, a Jacobi field $J$ on $[0,q+l]$ satisfies $J(0)=J(t_0)=0$ for some $t_0\in (q,q-l)$ if and only if $K(t)=J|_{[q,q+l]}(t-q)$ satisfies $K(0)=K(t_0-t)=0$, thus there are exactly $\ind_\Omega(c^l)$ many conjugate points of the last type.

For the second equation, just notice that since $c^q$ has Poincar\'e map $\Id$, it follows that $\ind_P(c^q)=0$, and since $c^l$ and $c^{q+l}$ have the same Poincar\'e map, il follows that $\ind_P(c^l)=\ind_P(c^{q+l})$.
\\

To prove Equation \eqref{E:b}, we first prove a couple of easy lemmas.

\begin{lem}\label{L:lagr-symp}
Let $\mathcal{L}$ be a Lagrangian subspace of $(V\oplus V,\omega)$ and let $K$ be a symplectic subspace. Then
\[
\dim (\mathcal{L}\cap K^\perp) -\dim(\mathcal{L}\cap K)+\dim K=\dim V.
\]
where $K^\perp=\{x\in V\oplus V\mid \omega(x,K)=0\}$.
\end{lem}
\begin{proof}
We compute
\begin{align*}
\dim (\mathcal{L}\cap K^\perp)	&=\dim(V\oplus V)-\dim (\mathcal{L}\cap K^\perp)^\perp\\
						&=2\dim(V)-\dim(\mathcal{L}^\perp+ K)\\
						&=2\dim(V)-\dim (\mathcal{L}^\perp) -\dim(K)+\dim(\mathcal{L}\cap K)\\
						&=\dim(V)-\dim(K)+\dim(\mathcal{L}\cap K).
\end{align*}
\end{proof}

\begin{lem}\label{L:dim-preimage}
Let $K=\ker(P-\Id)\In V\oplus V$, where $P\in \U(n-1)\In \Sp(n-1,\RR)$. Then for any subspace $\mathcal{L}$ of $V\oplus V$,
\[
\dim (P-\Id)^{-1}(\mathcal{L})=\dim K+\dim (\mathcal{L}\cap K^\perp).
\]
\end{lem}
\begin{proof}
Notice first that $K$ is the (generalised) eigenspace of $P$ with eigenvalue 1, and therefore it is a symplectic subspace of $V\oplus V$ (cf. for example \cite{BTZ}, p. 220-222). Because $P$ lies in the maximal compact subgroup of $\Sp(n-1,\RR)$, it is possible to write $P=\left(\begin{array}{cc}\Id_K &  \\ & P|_{K^\perp}\end{array}\right)$. In particular, $\textrm{Im}(P-\Id)\In K^\perp$, $K^\perp$ is $(P-\Id)$-invariant, and the restriction $(P-\Id)|_{K^\perp}$ is invertible. Therefore
\begin{align*}
\dim(P-\Id)^{-1}(\mathcal{L})&=\dim(P-\Id)^{-1}(\mathcal{L}\cap K^{\perp})\\
						&=\dim(K)+ \dim(P-\Id)|_{K^\perp}^{-1}(\mathcal{L}\cap K^{\perp})\\
						&=\dim(K)+ \dim(\mathcal{L}\cap K^{\perp}).
\end{align*}
\end{proof}

We can now prove Equation \eqref{E:b}. Let $P$ be the Poincar\'e map of $c$, so that $c^l$ and $c^{q-l}$ have Poincar\'e maps $P^l$ and $P^{q-l}$, respectively, and $P^q=\Id$.  Let us denote the Lagrangian subspace $0\oplus V$ with $\mathcal{L}$, and the symplectic space $\ker(P^l-\Id)=\ker(P^{q-l}-\Id)$ with $K$. By definition, $\dim K=\null(c^l)=\null(c^{q-l})$.

Since $P$ satisfies $P^q=\Id$, we have $\ind_P(c^q)=0$, $\null(c^q)=2(n-1)$ and therefore $\ind_0(c^q)=\ind_\Omega(c^q)+2(n-1)$. Equation \eqref{E:b} can be then simplified as
\[
\ind_0(c^{q-l})+\ind(c^l) =\ind_{\Omega}(c^q)+(n-1)
\]
To prove the equation above it is enough to prove that
\begin{align*}
\ind_{\Omega}(c^{q-l})+\ind_\Omega(c^l)&=\ind_\Omega(c^q)-\mu(q-l)\\
\ind_P(c^{q-l})+\ind_P(c^q)+\null(c^{q-l})&=(n-1)+\mu(q-l)
\end{align*}
where $\mu(q-l)$ denotes the number multiplicity of $q-l$ as a conjugate point of $c(0)$.

The first equation holds because for every conjugate point $t_0$ of $c^{q}$, we have either have $t_0\in (0,q-l)$, $t_0=q-l$, or $t_0\in (q-l,q)$, and by definition there are exactly $\ind_{\Omega}(c^{q-l})$ many conjugate points of the first type, and $\ind_\Omega(c^l)$ many conjugate points of the last type.

For the second equation notice that, since the Poincar\'e maps of $c^l$ and $c^{q-l}$ are inverses of each other, their concavity forms $\tilde{H}_{1}$ and $\tilde{H}_{2}$ (defined on the same space $(P^{l}-\Id)^{-1}(\mathcal{L})=(P^{q-l}-\Id)^{-1}(\mathcal{L})$) satisfy the relation $\tilde{H}_1=-\tilde{H}_{2}$. Therefore
\begin{align*}
(\ind+\dim \ker)\tilde{H}_{1}+(\ind+\dim \ker)\tilde{H}_{2}=\dim (P^{l}-\Id)^{-1}(\mathcal{L}) +\dim\ker \tilde{H}_{2}.
\end{align*}
By Lemma \ref{L:dim-preimage}, $\dim (P^{l}-\Id)^{-1}(\mathcal{L})=\dim(K)+ \dim(\mathcal{L}\cap K^{\perp})$, and the equation after (1.3) in \cite{BTZ} gives
\[
\dim\ker\tilde{H}_{2}=\dim K +\mu(q-l)-\dim(\mathcal{L}\cap K).
\]
Putting these equations together, we compute $\ind_P(c^{q-l})+\ind_P(c^q)+\null(c^{q-l})$ as
\begin{align*}
&(\ind+\dim \ker)\tilde{H}_{1}+(\ind+\dim \ker)\tilde{H}_{2}-\null(c^l)\\
								=&\dim (P^{l}-\Id)^{-1}(\mathcal{L}) +\dim\ker \tilde{H}_{2}-\dim K\\
								=&\mu(q-l)+\dim(K)+ \dim(\mathcal{L}\cap K^{\perp})-\dim(\mathcal{L}\cap K)\\
								=&\mu(q-l) +(n-1)
\end{align*}
where in the last equality we used Lemma \ref{L:lagr-symp}. Thus Equation \eqref{E:b} holds.

\end{proof}

Given a  Besse manifold $M$, let $i(M)$ denote the minimal index 
of a critical set for the energy functional in $\lp M$. It is easy to see that $i(M)$ is the lowest index $q$ 
such that $H^q_{\sphere^1}(\lp M,M;\QQ)\neq 0$. We thus have the following values for CROSSes:
\[
i(\sphere^n)=n-1,\qquad i(\CC\PP^n)=1,\qquad i(\HH\PP^n)=3,\qquad i(Ca\PP^2)=7.
\]
In general, since $M$ is a simply connected rational cohomology CROSS then by Proposition~\ref{P:Same-up-to-primes} $i(M)$
only depends on the CROSS $M$ is modelled on.

The following is a straightforward consequence of Proposition~\ref{P:index-gap}.
\begin{cor}\label{C:cor-remark}
Given a rational cohomology CROSS $M$ and a geodesic $c$ such that $c^q$ is regular, we have
\begin{align}
\ind(c^{k})&\geq \ind(c^{q})+(n-1)+i(M) \qquad\textrm{ if }k>q\label{eq1}\\
\ind_0(c^{k})&\leq \ind(c^{q})+(n-1)-i(M) \qquad\textrm{ if }k<q. \label{eq3}
\end{align}
Moreover, the inequality in \eqref{eq1} (resp. the inequality in \eqref{eq3}) is strict unless $k=q+1$ (resp. $k=q-1$) and $\ind(c)=i(M)$.
\end{cor}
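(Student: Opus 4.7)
The plan is to apply Proposition \ref{P:index-gap} directly, using the fact that by definition $\ind(c^j) \geq i(M)$ for every non-constant iterate $c^j$ of $c$.

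For \eqref{eq1}, given $k > q$ I would set $l = k - q > 0$ and invoke part (a) of Proposition \ref{P:index-gap}. Although that part is stated under the restriction $0 < l < q$, inspecting its proof shows that the upper bound $l < q$ is never used: the splitting of the conjugate points of $c^{q+l}$ into those in $(0,q)$, the one at $t_0 = q$ (of multiplicity $n-1$), and those in $(q, q+l)$ makes sense for any $l > 0$, and the equality $\ind_P(c^{q+l}) = \ind_P(c^l)$ follows merely from $P(c^q) = \Id$ (a consequence of regularity). Hence
\[
\ind(c^k) = \ind(c^q) + \ind(c^{k-q}) + (n-1) \geq \ind(c^q) + (n-1) + i(M),
\]
since $c^{k-q}$ is a non-constant closed geodesic and therefore lies in a critical submanifold of index at least $i(M)$.

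For \eqref{eq3}, given $0 < k < q$ set $l = q - k \in (0,q)$ so that part (b) of Proposition \ref{P:index-gap} applies with this $l$. Because $c^q$ is regular, its Poincar\'e map is the identity, and therefore $\null(c^q) = 2(n-1)$, giving $\ind_0(c^q) = \ind(c^q) + 2(n-1)$. Plugging this into (b),
\[
\ind_0(c^k) = \ind(c^q) + (n-1) - \ind(c^{q-k}) \leq \ind(c^q) + (n-1) - i(M),
\]
since $\ind(c^{q-k}) \geq i(M)$.

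For the strictness statement, equality in either inequality is equivalent to $\ind(c^l) = i(M)$ with $l = |k-q|$. When $l = 1$ this is precisely the condition $\ind(c) = i(M)$, so the only task that remains is to rule out $\ind(c^l) = i(M)$ for $l \geq 2$; this is the main obstacle. I would attack it by applying Proposition \ref{P:index-gap} to $c^l$ with respect to its own smallest regular iterate and then combining with the lacunarity bound of Section \ref{S:free-loop-space-CROSS}. The intuition is that on a rational cohomology CROSS the index-$i(M)$ critical stratum is too rigid to absorb a genuine iterate without violating the allowed equivariant cohomological dimensions, forcing $l = 1$.
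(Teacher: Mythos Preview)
Your derivation of the two inequalities \eqref{eq1} and \eqref{eq3} is correct and is precisely what the paper intends: plug $\ind(c^{|k-q|})\ge i(M)$ into the identities of Proposition~\ref{P:index-gap}. Your remark that the hypothesis $l<q$ is never used in the proof of part~(a) is accurate and is needed to cover all $k>q$ in \eqref{eq1}; likewise the computation $\ind_0(c^q)=\ind(c^q)+2(n-1)$ for the regular iterate is exactly right.

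The genuine gap is in the strictness assertion. You correctly reduce equality to the condition $\ind(c^l)=i(M)$ with $l=|k-q|$, and the case $l=1$ is immediate. For $l\ge 2$, however, your plan---apply Proposition~\ref{P:index-gap} to $c^l$ relative to its own smallest regular iterate and then invoke the lacunarity of Section~\ref{S:free-loop-space-CROSS}---does not do the job. Lacunarity only tells you in which \emph{parity} the groups $H^*_{\sphere^1}(\lp M,M;R)$ vanish; it says nothing that separates the value $\ind(c^l)$ from $\ind(c)$. Moreover, if you try to extract the needed inequality from Proposition~\ref{P:index-gap} alone by subtracting instances of (b) for different values of $l$, you only recover the tautology $\ind(c^l)-\ind(c)=\ind_0(c^{q-1})-\ind_0(c^{q-l})$, which is the same unknown in disguise. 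What is actually required is the strict monotonicity $\ind(c^l)>\ind(c)$ for $l\ge 2$ on a Besse manifold; this needs an additional ingredient beyond the two identities, for instance Bott's splitting $\ind(c^l)=\sum_{z^l=1}I_z(c)$ together with the positivity of the mean index $\alpha(c)=\bigl(\ind(c^q)+(n-1)\bigr)/q$ to force some $I_z(c)$ with $z\ne 1$ to be nonzero. You should supply such an argument explicitly rather than gesturing toward lacunarity.
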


\section{Perfectness of the energy functional}
\label{S:perfectness}
The goal of this section is to prove Theorem \ref{T:perfectness}, that is, that for every simply connected
Besse manifold $M$, the energy functional $E:\lp M\to \RR$ is perfect with respect
to the $\sphere^1$ equivariant, rational cohomology of $(\lp M,M)$.
\\

In Section \ref{SS:orientable-case} we prove Theorem \ref{T:perfectness} in the special case in which
all negative bundles are orientable. As pointed out in Corollary \ref{C:orientability-spin}, this is the case of spin manifolds, like manifolds with the integral cohomology ring of spheres, quaternionic projective spaces, or the Cayley plane. Finally, in \ref{SS:non-orientable-case} we prove Theorem  \ref{T:perfectness} in the general case.

\subsection{When the negative bundles are all orientable}\label{SS:orientable-case}

Let $M$ be a Besse manifold, and let $C_1,\ldots C_k\In \lp M$ be 
the critical sets of $E$ containing prime geodesics. For every $C\in\{C_1,\ldots C_k\}$, and every $q\in \ZZ$ 
let $C^q$ denote the critical set consisting of $q$-iterates of geodesics in $C$. Clearly,
every critical set of $E$ is of the form $C^q$ for some $q\in \ZZ$ and $C\in \{C_1,\ldots C_k\}$.


The core result of the section is Proposition \ref{P:no-odd-degree}, where we prove that the rational, 
$\sphere^1$-equivariant cohomology of every critical set is concentrated in even degrees.
This fact, together with the index parity result of the index in \cite{Wil}, will allow us to prove
Theorem \ref{T:perfectness} by the lacunarity principle.

Before that, however, we first need to analyse the structure of $p$-torsion in the cohomology of the critical 
sets, for big primes $p$.

\begin{rem}\label{R:notation}
\emph{For the rest of this section, we will denote by $R$ the localisation ring $R=\mathcal{P}^{-1}\ZZ$ where $\mathcal{P}$ is a finite collection of primes such that 
\begin{itemize}
\item every prime dividing the order of some element in $H^*(C;\ZZ)$ and $H^*_{\sphere^1}(C;\ZZ)$, $C\in \{C_1,\ldots C_k\}$ is contained in $\mathcal{P}$.
\item Proposition \ref{P:Same-up-to-primes} and Corollary \ref{C:right-parity-R} hold for $R=\mathcal{P}^{-1}\ZZ$.
\item For every prime geodesics of length $L/k$, all the prime divisors of $k$ are contained in $\mathcal{P}$. Equivalently, for any $p\notin \mathcal{P}$ and any critical set $K$, the set $K^p$ is also a critical set.
\end{itemize}}
\end{rem}

For this to make sense, one must first make sure that there are only finitely many primes dividing 
the orders of the elements of $H^*(C;\ZZ)$ and $H^*_{\sphere^1}(C;\ZZ)$. However, this is clearly 
true for $H^*(C;\ZZ)$ because it is finitely generated and in particular contains finitely many torsion elements.
For $H^*_{\sphere^1}(C;\ZZ)=H^*(C_{\sphere^1};\ZZ)$. From the Gysin sequence of 
the $\sphere^1$- bundle $C\to C_{\sphere^1}$ we obtain that there is an 
isomorphism $H^i_{\sphere^1}(C;\ZZ)\to H^{i+2}_{\sphere^1}(C;\ZZ)$ for any $i>\dim(C)$ and therefore the torsion of
$H^*_{\sphere^1}(C;\ZZ)$ is the same as the torsion of $H^{\leq \dim (C)}_{\sphere^1}(C;\ZZ)$, which is finite.

We recall the following basic fact
\begin{lem}\label{L:rel-contribution-degrees}
For any critical energy $e_r$ with critical manifold $K_r$, the group $H^i_{\sphere^1}(\lp^r,\lp^{r-1};\ZZ)$ can contain torsion free elements only in degrees $i\in \{\ind(K_r),\ldots, \ind_0(K_r)\}$.
\end{lem}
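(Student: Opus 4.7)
The plan is to reduce, via Morse-Bott theory and excision, to a cohomological computation on the critical manifold $K_r$ itself, and then to exploit the almost free $\sphere^1$-action on $K_r$ provided by Wadsley's theorem.

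First, excision together with the Morse-Bott deformation recalled in Section~\ref{S:recall} gives an $\sphere^1$-equivariant isomorphism
\[
H^i_{\sphere^1}(\lp^r, \lp^{r-1}; \ZZ) \cong H^i_{\sphere^1}(\N_r^{\leq 1}, \partial \N_r^{\leq 1}; \ZZ).
\]
Since I am only interested in the torsion-free part, it suffices to prove that the corresponding rational group vanishes outside the prescribed range. Applying the Thom isomorphism (with twisted coefficients when $\N_r$ is non-orientable, which is harmless over $\QQ$), I would identify
\[
H^i_{\sphere^1}(\lp^r, \lp^{r-1}; \QQ) \cong H^{i - \ind(K_r)}_{\sphere^1}\bigl(K_r;\, \mathcal{O}(\N_r) \otimes \QQ\bigr),
\]
where $\mathcal{O}(\N_r)$ denotes the orientation local system of the negative bundle.

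Next, by Wadsley's theorem the $\sphere^1$-action on $K_r$ has only finite isotropy (the ineffective kernel is a cyclic subgroup $\ZZ_k$ recording the iteration order of prime geodesics, and the quotient $\sphere^1/\ZZ_k$ acts freely). Therefore, over $\QQ$ the Borel construction collapses and the equivariant cohomology agrees with the orbifold cohomology of $K_r/\sphere^1$ with the induced local system. Since $K_r/\sphere^1$ is a closed orbifold of dimension $\dim K_r - 1 = \null(K_r) - 1$, its cohomology with any local coefficient system is supported in degrees $0$ through $\dim K_r - 1$. Shifting by $\ind(K_r)$, the rational group $H^i_{\sphere^1}(\lp^r, \lp^{r-1}; \QQ)$ vanishes unless
\[
\ind(K_r) \;\leq\; i \;\leq\; \ind(K_r) + \null(K_r) - 1 \;<\; \ind_0(K_r),
\]
which is strictly contained in the range claimed by the lemma. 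By the Universal Coefficient Theorem the integral group $H^i_{\sphere^1}(\lp^r, \lp^{r-1}; \ZZ)$ is then pure torsion outside $\{\ind(K_r),\ldots,\ind_0(K_r)\}$.

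I do not foresee any serious obstacle; the only mild care needed is handling the orientation local system $\mathcal{O}(\N_r)$ when $\N_r$ is non-orientable, but this is immaterial for the dimensional vanishing above. The argument in fact gives a slightly sharper bound (vanishing already at $i = \ind_0(K_r)$ rationally), so the statement of the lemma comes out with a degree of slack.
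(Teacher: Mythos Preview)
Your proof is correct and follows essentially the same route as the paper: reduce to rational coefficients, apply the Thom isomorphism to shift down by $\ind(K_r)$, and use that the almost free $\sphere^1$-action makes $H^*_{\sphere^1}(K_r;\QQ)\cong H^*(K_r/\sphere^1;\QQ)$ vanish above the dimension of the quotient orbifold. Your explicit handling of the orientation local system $\mathcal{O}(\N_r)$ is a small refinement over the paper's write-up, which tacitly uses the orientable case; note also that the paper's convention has $\null(K_r)=\dim K_r-1$ (counting only periodic Jacobi fields orthogonal to $c'$), so the bound you obtain matches the lemma exactly rather than being strictly sharper.
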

\begin{proof}
This is equivalent to showing that $H^i_{\sphere^1}(\lp^r,\lp^{r-1};\QQ)=0$ for $i\notin \{\ind(K_r),\ldots, \ind_0(K_r)\}$. Recall that $H^i_{\sphere^1}(\lp^r,\lp^{r-1};\QQ)=H^{i-\ind(K_r)}_{\sphere^1}(K_r;\QQ)$. Moreover, since $\sphere^1$ acts almost freely on $K_r$, the quotient $K_r/\sphere^1$ is an orbifold of dimension $\dim(K_r)-1=\null(K_r)$, and therefore $H^{i-\ind(K_r)}_{\sphere^1}(K_r;\QQ)=H^{i-\ind(K_r)}(K_r/\sphere^1;\QQ)$. This is clearly $0$ for $i\notin \{0,\ldots \null(K_r)\}$ and this proves the result.
\end{proof}

\begin{prop}\label{P:H-odd-1} \label{P:H-odd-2}
Let $K$ be a critical set of the energy functional, and let $p\notin \mathcal{P}$ be a prime. Then:
\begin{enumerate}
\item If $K$ is a critical manifold with $H^{odd}_{\sphere^1}(K;\QQ)= 0$, then any critical manifold of the form $K^q$ has no $p$-torsion in $H^{odd}_{\sphere^1}(K^q;\ZZ)$.
\item If $K$ is a critical manifold with $H^{odd}_{\sphere^1}(K;\QQ)\neq 0$, then $H^{2h+1}_{\sphere^1}(K^p;\ZZ)$ contains $p$-torsion for every $2h+1\geq \dim(K)-1$.
\end{enumerate}
\end{prop}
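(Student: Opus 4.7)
Both parts reduce to analyzing the Serre spectral sequence of a Borel fibration. Writing $K = C^s$ with $C \in \{C_1, \ldots, C_k\}$ a component of primes, the $\sphere^1$-action on $K^q = C^{sq}$ has ineffective kernel $\ZZ_{sq}$ with effective action free, so $K^q_{\sphere^1}$ fibres as a $B\ZZ_{sq}$-bundle over the closed smooth manifold $K/\sphere^1 = C/\sphere^1$ of dimension $d := \dim(K) - 1$. Equivalently, $K^p_{\sphere^1}$ is the pullback of $K_{\sphere^1} \to B\sphere^1$ along $B[p]: B\sphere^1 \to B\sphere^1$, yielding a $B\ZZ_p$-fibration $\pi_1: K^p_{\sphere^1} \to K_{\sphere^1}$. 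My plan is to study the spectral sequence
\[
E_2^{i,j} = H^i(K/\sphere^1; H^j(B\ZZ_{sq}; A)) \Longrightarrow H^{i+j}(K^q_{\sphere^1}; A)
\]
for appropriate coefficients $A$, exploiting that $H^j(B\ZZ_{sq}; A)$ vanishes in odd $j$ (for any abelian $A$), and that $H^*(C/\sphere^1; R)$ is torsion-free by the setup of $\mathcal{P}$.

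For Part~(1), I would take $A = R$. Because odd total degrees require $i$ odd, the task is to show the $E_2$-page vanishes in odd columns. The hypothesis $H^{odd}_{\sphere^1}(K; \QQ) = 0$ together with the rationally trivial fibre forces $H^{odd}(K/\sphere^1; \QQ) = 0$, and combined with torsion-freeness this promotes to $H^{odd}(K/\sphere^1; R) = 0$. Applying the universal coefficient theorem to the absence of $p$-torsion in $H^*(C/\sphere^1; \ZZ)$ extends the vanishing to coefficients in $R/mR$ for every $m \mid sq$. Hence $E_2^{i,j}=0$ for $i$ odd, $H^{odd}(K^q_{\sphere^1}; R) = 0$, and no $p$-torsion can appear in $H^{odd}_{\sphere^1}(K^q; \ZZ)$.

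For Part~(2), I would use $A = \ZZ$ and construct explicit $p$-torsion classes that survive. The top row gives $E_2^{d, 2l} = H^d(K/\sphere^1; \ZZ_{sp}) = \ZZ_{sp}$ for $l \geq 1$ (by orientability of $K/\sphere^1$), carrying $p$-torsion. All outgoing differentials from $E_r^{d, 2l}$ vanish because $H^{>d}(K/\sphere^1) = 0$. Multiplying the fundamental class $[K/\sphere^1]^* \in H^d(K/\sphere^1; \ZZ)$ by the fibre class $\tilde\beta \in H^2(B\ZZ_{sp}; \ZZ)$ produces classes $[K/\sphere^1]^* \cdot \tilde\beta^l$ giving non-trivial $p$-torsion in $H^{d+2l}_{\sphere^1}(K^p; \ZZ)$ for $l \geq 1$. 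To cover every odd degree $2h+1 \geq d$ — in particular the boundary case $2h+1 = d$ when $d$ is odd — I would combine this with products $\alpha \cdot \tilde\beta^l$, where $\alpha \in H^{i_0}(K/\sphere^1; \ZZ)$ is a non-torsion class in an odd degree $i_0 < d$. Poincar\'e duality on $K/\sphere^1$ applied to the hypothesis ensures such $\alpha$ exists: a non-zero class in $H^{i_0}(K/\sphere^1;\QQ)$ pairs with its Poincar\'e dual in $H^{d-i_0}(K/\sphere^1;\QQ)$, so varying $i_0$ over all odd degrees supplied by duality gives classes of every required degree $i_0+2l+2$.

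The main obstacle is twofold: carefully bounding the incoming differentials $d_r: E_r^{d-r, 2l+r-1} \to E_r^{d, 2l}$ (for odd $r \geq 3$) to confirm that the $p$-part of $\ZZ_{sp}$ survives to $E_\infty$ (which I expect to control using the multiplicative structure, together with the fact that the transgression of $\tilde\beta$ is pulled back from $H^3(B\sphere^1;\ZZ) = 0$), and verifying via Poincar\'e duality that an odd-degree non-torsion class exists in every degree $i_0 \leq 2h-1$ needed to exhibit $p$-torsion in the target range $2h+1 \geq d$.
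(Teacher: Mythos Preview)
Your approach is genuinely different from the paper's. You use the Serre spectral sequence of the fibration $B\ZZ_{sq}\to K^q_{\sphere^1}\to C/\sphere^1$; the paper instead realizes $K^q_{\sphere^1}$ as a principal $\sphere^1$-bundle over $K_{\sphere^1}\times B\sphere^1$ with Euler class $\bar e=e\otimes 1-q(1\otimes c)$ and argues entirely via the Gysin sequence. The advantage of the Gysin route is that there is only one ``differential'' (cupping with $\bar e$), so no higher differentials need controlling; Part~(2) then reduces to the single observation that for $x$ odd-degree torsion-free with $x\cup e=0$, the element $x\otimes c^{m-1}$ maps to $p(x\otimes c^m)$ under $\cup\bar e$, producing $p$-torsion in the cokernel.

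Your Part~(1) is essentially correct. One point you omit: the local coefficient system on $H^*(B\ZZ_{sq})$ is trivial, which holds because the fibration is associated to the principal $\sphere^1$-bundle $C\to C/\sphere^1$ and $\sphere^1$ is connected.

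In Part~(2) the gap you flag is real, though fixable. With integer coefficients, $E_2^{i,2l}=H^i(C/\sphere^1;\ZZ_{sp})$ contains Tor contributions from $\mathcal{P}$-torsion in $H^{i+1}(C/\sphere^1;\ZZ)$, and these are not of the form $\alpha\cdot\tilde\beta^l$, so multiplicativity alone does not kill the differentials. The remedy is to localize at $p$: then $H^*(C/\sphere^1;\ZZ_{(p)})$ is free (by the choice of $\mathcal{P}$), $E_2^{*,2l}$ is genuinely generated over the base by $\tilde\beta^l$, and since $\tilde\beta$ is a permanent cycle (your pullback-from-$B\sphere^1$ observation handles $d_3$; for $r\geq 4$ the target of $d_r\tilde\beta$ has negative fibre degree) the spectral sequence degenerates. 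Finally, note that in the Besse setting $\dim K$ is always odd --- critical sets are fixed loci of a symplectic $\ZZ_k$-action on $T^*M\setminus 0$, hence symplectic and even-dimensional there, hence odd-dimensional in the hypersurface $T^1M$ --- so $d=\dim K-1$ is even. Your top-class products $[K/\sphere^1]^*\cdot\tilde\beta^l$ therefore live only in even degrees and are irrelevant to the statement; a single odd-degree torsion-free $\alpha$ (necessarily of degree $\leq d-1$) already yields nonzero $p$-torsion in every odd degree $\geq d+1$ via $\alpha\cdot\tilde\beta^l$, making the Poincar\'e-duality bookkeeping you propose unnecessary.
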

\begin{proof}
Recall that for every coefficient ring, $H^*_{\sphere^1}(K^{q})=H^*(K^{q}_{\sphere^1})$ with
\[
K^{q}_{\sphere^1}\simeq K\times_{\sphere^1}E\sphere^1
\]
where $\sphere^1$ acts on $K\times E\sphere^1$ by $z\cdot (x,a)=(z^q\cdot x, z \cdot a)$.
We can rewrite this slightly differently but homotopically equivalent as
\[
K^{q}_{\sphere^1}\simeq K\times_{\sphere^1}E\sphere^1\times E\sphere^1
\]
with $z (x,a,b)=(z^qx,z^qa,zb)$. Since the above $\sphere^1$-action extends naturally to a free torus action, 
 we see that $K^{q}_{\sphere^1}$ is an $\sphere^1$-bundle 
over $K_{\sphere^1}\times B\sphere^1$.

The Euler class of this bundle $\xi\colon K^{q}_{\sphere^1}\rightarrow K_{\sphere^1}\times B\sphere^1$
is given by
\[
\ol{e}=e\otimes 1-q(1\otimes c)\in H^2(K_{\sphere^1}\times B\sphere^1;R)\subset  H^*_{\sphere^1}(K;R)\otimes H^*(B\sphere^1;R).
\]
where $c$ is the generator of $H^2(B\sphere^1;R)$ and $e$ is the Euler class of the bundle 
$K\rightarrow K_{\sphere^1}$.
The Gysin sequence for the bundle $K^q_{\sphere^1}\to K_{\sphere^1}\times B\sphere^1$ reads
\begin{align}\label{E:Gysin}
H^{2k+1}(K_{\sphere^1}\times B\sphere^1;R)\stackrel{\xi^*}{\lra} H^{2k+1}(K_{\sphere^1}^q;R)\stackrel{\xi_!}{\lra} H^{2k}(K_{\sphere^1}\times B\sphere^1;R)
\end{align}
If $H^{2k+1}(K_{\sphere^1}^q;R)$ had some $p$-torsion element $x$, 
it would lie in the kernel of $\xi_!$ because $H^{*}(K_{\sphere^1}\times B\sphere^1;R)$ 
does not have any $p$-torsion by definition of $\mathcal{P}$. Then it would be $x=\xi^*(y)$
for some $y\in H^{2k+1}(K_{\sphere^1}\times B\sphere^1;R)$. By the choice of $p$, $y$ cannot be $p$-torsion and
thus it must be torsion free, which implies that $H^{odd}(K_{\sphere^1};\QQ)\neq 0$. This proves the first point.

Suppose now that $H^{odd}_{\sphere^1}(K;\QQ)\neq 0$, and let $x\in H^{h_0}(C_{\sphere^1};R)$, $h_0$ odd, 
be a torsion-free element not divisible by $p$, such that $x\cup e=0$. Such an $x$ exists because 
$H^{odd}(K_{\sphere^1};\QQ)$ is nonzero and, since the $\sphere^1$-action on $K$ is almost free, 
the cohomological dimension of $H^*(K_{\sphere^1};\QQ)$ is at most $\dim(K) -1$. The Gysin sequence of $\xi$
reads
\[
 H^{h_0+2m-2}(K_{\sphere^1}\times B\sphere^1;R)\stackrel{\cup \ol{e}}{\lra}
 H^{h_0+2m}(K_{\sphere^1}\times B\sphere^1;R) \stackrel{\xi^*}{\lra} H^{h_0+2m}(K_{\sphere^1}^{ p};R)
\]
The map $\cup \ol{e}$ is easily seen to be injective. Combining with the fact 
that the primitive element $(x\otimes c^{m-1})$ is mapped to
$p\cdot(x\otimes c^m)$, 
we deduce that $p$-torsion can be found in the image of 
$\xi^*$.
%
\end{proof}

Recall from Section \ref{S:recall} that we denote by $K_1, K_2,\ldots$ the list of the critical sets of $E$ of positive energy, in increasing order $e_1<e_2<\ldots$. For every $k\geq 0$, let $i_k$ denote the index of $K_k$. Moreover, let us define the sublevel sets $\lp^k=E^{-1}([0, e_k+\epsilon))\In \Lambda M$ for some $\epsilon>0$ small enough.
\\

\begin{prop}\label{P:no-odd-degree}
For every critical set $K$ of the energy functional, one has $H^{odd}_{\sphere^1}(K;\QQ)=0$.
\end{prop}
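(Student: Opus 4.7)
I would argue by contradiction, combining the Index Parity of \cite{Wil}, the Index Gap of Corollary \ref{C:cor-remark}, Proposition \ref{P:H-odd-1}, and the lacunarity of Corollary \ref{C:right-parity-R}. Assume some critical set has non-zero odd rational equivariant cohomology. Since the quotients $C/\sphere^1$ and $C^q/\sphere^1$ are canonically isomorphic as orbifolds (the $\sphere^1$-action on $C^q$ factors through $\sphere^1/\ZZ_q$), and the actions are almost free, one has $H^*_{\sphere^1}(C^q;\QQ)=H^*(C/\sphere^1;\QQ)=H^*_{\sphere^1}(C;\QQ)$ for every $q\ge 1$. Hence the contradiction may be sought assuming the offender is a prime-geodesic critical set $C\in\{C_1,\dots,C_k\}$. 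I choose $C=K_{r_0}$ of smallest critical energy among such offenders, and assume for concreteness that $n$ is odd, so by Index Parity every critical set has even index (the case $n$ even is symmetric with parities swapped).

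The first step is to establish $H^{\mathrm{odd}}_{\sphere^1}(\Lambda^{r_0-1},M;R)=0$. For each $r<r_0$, the critical set $K_r$ is an iterate $C'^{q'}$ of some prime-geodesic $C'$ with energy strictly below that of $C$, so by minimality $H^{\mathrm{odd}}_{\sphere^1}(C';\QQ)=0$; Proposition \ref{P:H-odd-1}(1) then guarantees that $H^{\mathrm{odd}}_{\sphere^1}(K_r;\ZZ)$ has no $p$-torsion for any $p\notin\mathcal{P}$, so $H^{\mathrm{odd}}_{\sphere^1}(K_r;R)=0$. Combined with the orientability assumption and the Thom isomorphism
\[
H^j_{\sphere^1}(\Lambda^r,\Lambda^{r-1};R)\;\cong\;H^{j-\ind(K_r)}_{\sphere^1}(K_r;R),
\]
this yields $H^j_{\sphere^1}(\Lambda^r,\Lambda^{r-1};R)=0$ for odd $j$ and every $r<r_0$. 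An induction on $r$ via the long exact sequences of the triples $(\Lambda^r,\Lambda^{r-1},M)$ then delivers the claim.

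Next, I fix a sufficiently large prime $p\notin\mathcal{P}$ and let $k_C$ be the smallest integer with $C^{k_C}$ regular, so that $C^{k_C p}$ is regular as well. Since $C^{k_C}$ shares its rational orbit cohomology with $C$, it still satisfies $H^{\mathrm{odd}}_{\sphere^1}(C^{k_C};\QQ)\neq 0$, and Proposition \ref{P:H-odd-1}(2) produces $p$-torsion classes in $H^{2h+1}_{\sphere^1}(C^{k_C p};\ZZ)$ for every $2h+1\geq\dim C - 1$. Via the Thom isomorphism and the evenness of $\ind(C^{k_C p})$, these become $p$-torsion classes in $H^j_{\sphere^1}(\Lambda^r,\Lambda^{r-1};R)$ in odd degrees $j\geq\ind(C^{k_C p})+\dim C - 1$, where $r$ indexes $C^{k_C p}$ in the energy ordering. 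Since Corollary \ref{C:right-parity-R} gives $H^{\mathrm{odd}}_{\sphere^1}(\Lambda M,M;R)=0$, it is enough to show that at least one such class survives to $H^j_{\sphere^1}(\Lambda M, M; R)$.

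The hard part, and the technical heart of the argument, is isolating a degree $j$ in which survival is forced. I would invoke two observations. First, inspection of the proof of Proposition \ref{P:H-odd-1}(2) shows that the $p$-torsion in $H^*_{\sphere^1}(C_i^{q'};\ZZ)$ produced there is proportional to $q'$, so vanishes unless $p\mid q'$; combined with Proposition \ref{P:H-odd-1}(1) applied to those $C_i$ with $H^{\mathrm{odd}}_{\sphere^1}(C_i;\QQ)=0$, the only critical sets that can carry $p$-torsion in odd degree are iterates $C_i^{m k_{C_i} p}$ of the offending prime-geodesic critical sets. Second, the Index Gap (Corollary \ref{C:cor-remark}), applied at the regular iterate $C^{k_C p}$, shows that the indices of these competing critical sets are separated from $\ind(C^{k_C p})$ by at least $(n-1)+i(M)$, and much more for higher iterations or other offenders (whose per-period index jumps eventually diverge). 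Choosing $j$ just above $\ind(C^{k_C p})+\dim C-1$ therefore isolates an odd degree in which $C^{k_C p}$ alone contributes $p$-torsion in $H^{*}_{\sphere^1}(\Lambda^{r'},\Lambda^{r'-1};R)$ for $|r'-r|$ small, and in which no competing critical set contributes in degrees $j-1$ or $j+1$ either. A chase through the long exact sequences of the pairs $(\Lambda^{r'},\Lambda^{r'-1})$, starting from the vanishing $H^{\mathrm{odd}}_{\sphere^1}(\Lambda^{r_0-1},M;R)=0$ established in step two and using the index separation to guarantee no cancellation from above, forces the class to descend to a non-zero element of $H^j_{\sphere^1}(\Lambda M, M; R)$, contradicting Corollary \ref{C:right-parity-R}.
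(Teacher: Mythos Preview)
Your overall strategy (contradiction via $p$-torsion, using Proposition~\ref{P:H-odd-1}, Index Parity, Index Gap, and Corollary~\ref{C:right-parity-R}) matches the paper's, and your first step---that $H^{\mathrm{odd}}_{\sphere^1}(\lp^{r_0-1},M;R)=0$---is correct. But the choice to plant the $p$-torsion at the \emph{regular} iterate $C^{k_C p}$ is a genuine mistake, and the survival argument that follows is not a proof.

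The concrete gap is this: when $k_C>1$, the set $C^{k_C p}$ is \emph{not} the critical manifold at its energy level. The geodesics in $C^{k_C p}$ have length $pL$, and the critical manifold $K_r$ at that energy is all of $T^1M$; your $C^{k_C p}$ sits inside it as the $\ZZ_{k_C}$-fixed locus. The Thom isomorphism reads $H^j_{\sphere^1}(\lp^r,\lp^{r-1};R)\cong H^{j-\ind(K_r)}_{\sphere^1}(K_r;R)$ with $K_r\cong T^1M$, not with $C^{k_C p}$. Likewise, Proposition~\ref{P:H-odd-1}(2) requires $K$ to be a critical manifold; $C^{k_C}$ is only a proper submanifold of the regular level $T^1M$ when $k_C>1$, so applying it to $K=C^{k_C}$ is illegitimate, and even if one produced $p$-torsion in $H^{\mathrm{odd}}_{\sphere^1}(C^{k_C p};\ZZ)$ this would not yield $p$-torsion in $H^{\mathrm{odd}}_{\sphere^1}(K_r;\ZZ)$. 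The paper avoids this by working at $C^p$ instead, whose length $pl$ is \emph{not} a multiple of $L$ (for $p\notin\mathcal{P}$ and $C$ non-regular), so that $C^p$ genuinely is the critical manifold at its own level.

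Even granting the $p$-torsion, your survival argument is not an argument. You assert that ``inspection'' of Proposition~\ref{P:H-odd-1} shows $p$-torsion appears only in $C_i^{mk_{C_i}p}$---but the $k_{C_i}$ factor has no justification (the Euler class computation gives $q$-torsion, not $k_{C_i}q$-torsion), and in any case $C^p, C^{2p},\ldots,C^{(k_C-1)p}$ lie below your level and also carry $p$-torsion, so the ``isolation'' you claim does not hold. The paper's mechanism is different and more delicate: it sandwiches $C^p$ between two \emph{adjacent} regular levels $K_{r_1},K_{r_2}$ (lengths $u_1L<pl<u_2L$, possible because $p$ is large), so that \emph{no} critical manifold strictly between $K_{r_1}$ and $K_{r_2}$ other than $K_r$ contains a $p$-iterate. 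It then proves (Step~1) that $H^*_{\sphere^1}(\lp M,M)\to H^*_{\sphere^1}(\lp^h,M)$ is surjective in high degree for $r_1\le h\le r-1$, using the Gysin sequence at the regular level $K_{r_1}$; this surjectivity is what forces the $p$-torsion of $K_r$ to inject into $H^*_{\sphere^1}(\lp^r,M)$, and then to persist up through $K_{r_2}$ and into $H^*_{\sphere^1}(\lp M,M)$. Your vanishing at level $r_0-1$ plays no role in this; the work happens near $C^p$, not near $C$.
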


\begin{proof} We choose a large prime $p$ and consider the ring 
\[S:=\ZZ\bigl[\{1/q
\mid \mbox{$q$ prime $q\neq p$}\}\bigr]\]
all cohomology groups in this proof will be with respect to coefficients in $S$. 
Equivalently one can work with integral coefficients and ignore all torsion except for $p$-torsion.

We argue by contradiction and consider the smallest length $l$ for which 
the set $C=C_i$ of geodesics of length $l$ has nontrivial rational cohomology in some 
odd degree.  
We let $C_1,\ldots, C_{i-1}$ be the critical manifolds of smaller length 
containing primitive geodesics and 
$C_{i+1},\ldots,C_m\cong T^1M$ the ones of larger length containing primitive geodesics. 
Choose $\epsilon>0$ such that there are no closed geodesics of
length $l'\in ([l-\epsilon,l+\epsilon]\setminus \{l\})$.

We may assume that $p$ is so large that we can find positive integers $u_1<u_2$ such 
that $p(l-\epsilon)<u_1L<pl<u_2L<p(l+\epsilon)$, where $L$ is the common period of all unit speed geodesics.

Recall that any critical manifold is given by iterating the geodesics in $\{C_1,\ldots C_{m}\}$, that 
is $K_h=(C_{j(h)})^{l(h)}$. 
Let $r$ be the index corresponding to the critical manifold $C_i^p=K_r$.
Furthermore let $r_1<r$ and $r_2>r$ denote the index corresponding to length $u_1L$ and $u_2L$, 
respectively. 

By construction $K_h$ does not contain a $p$-times iterated geodesic if 
$h=r_1,\ldots,r-1$ or $h=r+1,\ldots,r_2$. In the following we denote by $[x]$ the Gau{\ss} bracket of a real number
$x$.\\[2ex]
{\bf Step 1.} Suppose $r_1\le h\le r-1$.
Then the inclusion $\Lambda^h\rightarrow \Lambda M$ induces an epimorphism 
$H^i_{\sphere^1}(\Lambda M,M)\rightarrow H^i_{\sphere^1}(\Lambda^h,M)$ 
for $i\ge \ind(K_{r_2})+n-2$.

Let $e\in H^2(\Lambda M\times_{\sphere^1} E\sphere^1)$ denote the Euler class 
of the $\sphere^1$-bundle $\Lambda M\rightarrow \Lambda M\times_{\sphere^1} E\sphere^1$. 

We first consider the $\Lambda^{r_1-1}$. By the index gap Lemma
we have $\ind_0(K_h)\le \ind(K_{r_1})+(n-1)$, $h=1,\ldots,r_1-1$. 
Hence $H^i(\Lambda^{r_1-1},M)=0$ for $i\ge \ind(K_{r_1})+n$. 
Using the Gysin sequence of the $\sphere^1$-bundle we see that 
$\cup e\colon H^{i}_{\sphere^1}(\Lambda^{r_1-1},M)\rightarrow H^{i+2}_{\sphere^1}(\Lambda^{r_1-1},M)$
is an epimorphism for $i\ge \ind(K_{r_1})+2[(n-1)/2]$.

Furthermore, for each $h<r_1$ we know that either $H^{odd}(K_h)=0$ or $K_h$ does not contain 
a $p$-iterated geodesic. In either case, by Lemma \ref{L:rel-contribution-degrees} and 
Proposition \ref{P:H-odd-1} one has 
$H^i_{\sphere^1}(\Lambda^h,\Lambda^{h-1})=H^{i-\ind(K_h)}_{\sphere^1}(K_h)=0$ if 
$i\equiv n\mod 2$ with $i\ge \ind(K_{r_1})+2[(n-1)/2]$.
Let $i_1=\ind(K_{r_1})$. From the exact sequence of the triple 
$(\Lambda^{r_1},\Lambda^{r_1-1},M)$ we then obtain, for each $i\equiv n+1\mod 2$, the diagram
\begin{eqnarray*}
 H^{i+2-i_1}_{\sphere^1}(K_{r_1}) \to &H^{i+2}_{\sphere^1}(\lp^{r_1},M)\stackrel{i^*}{\lra}& 
H^{i+2}_{\sphere^1}(\lp^{r_1-1},M)\stackrel{d}{\lra} 
0\\
\uparrow\hspace*{4em} &
\uparrow\hspace*{4em} &\hspace*{4em}
\uparrow \\
H^{i-i_1}_{\sphere^1}(K_{r_1}) \to &H^{i}_{\sphere^1}(\lp^{r_1},M)\stackrel{i^*}{\lra}&
H^{i}_{\sphere^1}(\lp^{r_1-1},M)\stackrel{d}{\lra} 
0\\
\end{eqnarray*}
where the vertical arrows are given by cupping with $e$ and the horizontal sequences are exact.
As explained the last vertical map is an epimorphism for $i\ge i_1+2[(n-1)/2]$. 
Since $K_{r_1}=T^1M$ is a regular level the first vertical map is an epimorphism for 
$i\ge i_1+2[(n-1)/2]$ as well. 
By the Four Lemma this implies that the middle map is an epimorphism. 

The Index Gap Lemma implies for any critical manifold $K_j$ with $j> r_1$ that 
$\ind(K_j)\ge j_1=i_1+2[(n-1)/2]+2$. 
Hence the map $H^{j_1}(\Lambda M,M)\rightarrow H^{j_1}(\Lambda^{r_1},M)$ is an isomorphism. 
By the previous discussion we deduce that 
$H^{j}_{\sphere^1}(\Lambda M,M)\rightarrow H^{j}_{\sphere^1}(\Lambda^{r_1},M)$ is surjective for each $j\ge j_1$.

Similarly, the natural map 
$H^{j}_{\sphere^1}(\Lambda^h,M)\rightarrow H^{j}_{\sphere^1}(\Lambda^{r_1},M)$ is an isomorphism
for $j\ge \ind(K_{r_2})+n-2$ and $h=r_1,\ldots, r-1$, since  $K_{r_1+1},\ldots, K_{r-1}$ 
do not contain $p$-iterated geodesics.
Thus the claim of Step 1 follows. \\[2ex]
{\bf Step 2.} $H^j(\Lambda^rM,M)\neq 0$ for all $j\ge \ind(K_{r_2})+n-2$ with 
$j\equiv n \mod 2$.

By Proposition \ref{P:H-odd-1}, $H^{j}_{\sphere^1}(K_r)\neq 0$ for all odd $j\ge \dim(K_r)$. 
Using the exact sequence of the triple $(\lp^r,\lp^{r-1},M)$

\[
H^{j-1}_{\sphere^1}(\lp^{r},M)  \stackrel{\iota^*}{\lra} H^{j-1}_{\sphere^1}(\lp^{r-1},M)\stackrel{d}{\lra} 
H^{j-i_r}_{\sphere^1}(K_r)\stackrel{}{\lra} H^{j}_{\sphere^1}(\lp^{r},M)\to 
\]
The map $\iota^*$ is a factor of $H^{j-1}_{\sphere^1}(\lp,M)  \to H^{j-1}_{\sphere^1}(\lp^{r-1},M)$, which 
is surjective by Step 1, and therefore so is $\iota^*$. 
Hence the last map in the sequence above is injective and the result follows.\\[2ex]
{\bf Step 3.} $H^j_{\sphere^1}(\Lambda^{r_2},M)\neq 0$ for all $j\ge \ind(K_{r_2})+2[(n-1)/2]+1$ with 
$j\equiv n \mod 2$. 

The critical manifolds $K_{r+1},\ldots, K_{r_2}$ do not contain $p$-iterated geodesics. 
By the index gap Lemma $H^j_{\sphere^1}(\Lambda^h,\Lambda^{h-1})=0$ for all $j\ge \ind(K_{r_2})+2[(n-1)/2]+1$ 
and $h=r+1,\ldots,r_2-1$. 
This readily implies $H^j(\Lambda^{r_2-1},M)\neq 0$ for all $j\ge \ind(K_{r_2})+2[(n-1)/2]+1$. 
The critical manifold $K_{r_2}\cong T^1M$ is regular and 
$H^{j}_{\sphere^1}(\Lambda^{r_2},\Lambda^{r_2-1})=0$ if $j\equiv n\mod 2$ while 
$H^{j}_{\sphere^1}(\Lambda^{r_2},\Lambda^{r_2-1})$ is torsion free if $j\equiv n+1\mod 2$. 
Clearly the result follows.
\\

Finally, the following step provides a contradiction to Corollary \ref{C:right-parity-R}. \\[2ex]
{\bf Step 4.} $H^{j_0}_{\sphere^1}(\Lambda M,M)\neq 0$ for $j_0= \ind(K_{r_2})+2[(n-1)/2]+1$. 

By the Index Gap Lemma all indices of critical manifold of energy $> e(K_{r_2})$ have indices 
$>j_0$. Furthermore the relative cohomology groups $H^{j_0+1}(\Lambda^h,\Lambda^{h-1})$ 
is torsion free for $h>r_2$ while $H^{j_0}_{\sphere^1}(\Lambda^{r_2},M)$ consists of nontrivial $p$-torsion.
Thus the map $H^{j_0}_{\sphere^1}(\Lambda M,M)\rightarrow 
H^{j_0}_{\sphere^1}(\Lambda^{r_2},M)$ is surjective.

\end{proof}

\begin{cor} Let $M$ be a Besse manifold. Then the energy function $E:\lp M\to \RR$ is rationally $\sphere^1$-equivariantly perfect, relatively to $M=\lp^0\In \lp M$ when all the negative bundles of all the critical sets of $E$ are orientable.
\end{cor}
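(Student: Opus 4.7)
The plan is to invoke the classical lacunarity principle for $\sphere^1$-equivariant Morse--Bott theory, which reduces perfectness to a parity statement about the relative cohomology groups $H^*_{\sphere^1}(\lp^i,\lp^{i-1};\QQ)$. The three ingredients I would combine are the orientability hypothesis, the index parity theorem of~\cite{Wil}, and Proposition~\ref{P:no-odd-degree}.

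First, since each negative bundle $\N_i\to K_i$ is orientable and $\sphere^1$-equivariant, the equivariant Thom isomorphism gives
$$H^k_{\sphere^1}(\lp^i,\lp^{i-1};\QQ)\;\cong\;H^{k-\ind(K_i)}_{\sphere^1}(K_i;\QQ).$$
By the index parity theorem of~\cite{Wil}, $\ind(K_i)\equiv n+1\pmod 2$ for every critical manifold, where $n=\dim M$. Combined with Proposition~\ref{P:no-odd-degree} (which asserts $H^{odd}_{\sphere^1}(K_i;\QQ)=0$), this forces
$$H^k_{\sphere^1}(\lp^i,\lp^{i-1};\QQ)=0\qquad\text{whenever }k\not\equiv n+1\pmod 2.$$

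Second, I would induct on $i$ using the long exact sequence of the triple $(\lp^i,\lp^{i-1},M)$ to show that $H^*_{\sphere^1}(\lp^i,M;\QQ)$ is also concentrated in degrees $\equiv n+1\pmod 2$, and that the restriction $H^*_{\sphere^1}(\lp^i,M;\QQ)\to H^*_{\sphere^1}(\lp^{i-1},M;\QQ)$ is surjective. The base case $\lp^0=M$ is trivial. In the inductive step, the connecting homomorphism
$$\delta\colon H^{k-1}_{\sphere^1}(\lp^{i-1},M;\QQ)\to H^k_{\sphere^1}(\lp^i,\lp^{i-1};\QQ)$$
has source and target of opposite parity (by the inductive hypothesis and the previous paragraph), so $\delta=0$; the long exact sequence collapses to short exact sequences
$$0\to H^k_{\sphere^1}(\lp^i,\lp^{i-1};\QQ)\to H^k_{\sphere^1}(\lp^i,M;\QQ)\to H^k_{\sphere^1}(\lp^{i-1},M;\QQ)\to 0.$$

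Finally, I would pass to the limit over the filtration $\lp M=\bigcup_i\lp^i$. The Index Gap Lemma (Proposition~\ref{P:index-gap}) ensures that $\ind(K_i)\to\infty$, so in each fixed degree only finitely many critical manifolds contribute and the surjectivity of the restriction maps makes the inverse system Mittag--Leffler; the $\varprojlim^1$ term in the Milnor sequence vanishes and the short exact sequences assemble to a direct-sum decomposition
$$H^*_{\sphere^1}(\lp M,M;\QQ)\;\cong\;\bigoplus_{i\geq 1}H^{*-\ind(K_i)}_{\sphere^1}(K_i;\QQ),$$
which is exactly the statement that $E$ is $\sphere^1$-equivariantly $\QQ$-perfect relative to $M$. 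The only nontrivial input beyond formal homological algebra is Proposition~\ref{P:no-odd-degree}, already established; everything else is a direct lacunarity argument and should be essentially mechanical.
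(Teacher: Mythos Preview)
Your argument is correct and is essentially the paper's own proof: the paper also combines the equivariant Thom isomorphism (via orientability), the index parity from \cite{Wil}, and Proposition~\ref{P:no-odd-degree}, then runs the same lacunarity induction on the long exact sequence of the triple $(\lp^i,\lp^{i-1},M)$ to force the connecting maps to vanish. Your Mittag--Leffler passage to the limit is a harmless elaboration; the paper simply stops once it has shown that each map $H^*_{\sphere^1}(\lp^i,\lp^{i-1};\QQ)\to H^*_{\sphere^1}(\lp^i,M;\QQ)$ is injective, which is already the standard formulation of perfectness.
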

\begin{proof}
We will prove this for $M$ even dimensional, the other case follows in the same way. It is enough to prove that for every $i$, the map
\[
H^{*-\ind(K_i)}_{\sphere^1}(K_i;\QQ)\simeq H^*_{\sphere^1}(\lp^i,\lp^{i-1};\QQ)\to H^*_{\sphere^1}(\lp^i;\QQ)
\]
is injective. We prove this, together with the statement that $H^{ev}_{\sphere^1}(\lp^i,M;\QQ)=0$, by induction on $i$.

For $i=0$ there is nothing to prove, so suppose that the both statements hold for $i-1$. By the long exact sequence of $(\lp^{i},\lp^{i-1},M)$ we have
\begin{align*}
H^{2m-\ind(K_i)}_{\sphere^1}(K_i;\QQ)&\to H^{2m}_{\sphere^1}(\lp^i,M;\QQ)\to H^{2m}_{\sphere^1}(\lp^{i-1},M;\QQ)\to \\
&\to H^{2m+1-\ind(K_i)}_{\sphere^1}(K_i;\QQ)\to H^{2m+1}_{\sphere^1}(\lp^i,M;\QQ)\to \ldots
\end{align*}
Since $M$ is even dimensional, $\ind(K)$ is odd, thus $H^{2m-\ind(K_i)}_{\sphere^1}(K_i;\QQ)=0$ by Proposition \ref{P:no-odd-degree}. Moreover, by the induction hypothesis $H^{2m}_{\sphere^1}(\lp^{i-1},M;\QQ)=0$, which gives
\begin{align*}
H^{2m}_{\sphere^1}(\lp^i,M;\QQ)&=0\\
0\to H^{2m+1-\ind(K_i)}_{\sphere^1}(K_i;\QQ)&\to H^{2m+1}_{\sphere^1}(\lp^i,M;\QQ),
\end{align*}
thus proving the induction step.
\end{proof}

%

\subsection{The general case}\label{SS:non-orientable-case}

We now remove the assumption that the negative bundles $\N\to K$ are all orientable. By Corollary \ref{C:orientability-spin}, this can only happen if the manifold $M$ has the integral cohomology of $\CC\PP^{2n}$. In particular, $M$ has even dimension, and therefore:
\\

\emph{For the rest of the section, we will assume that the manifold $M$ is even dimensional. In particular, by \cite{Wil}, $\ind(K)$ is odd for every critical set $K$ of the energy functional.}
\\

Let $K$ be a critical manifold with non orientable negative bundle $\N\to K$. In this case, we denote by $\delta:\hat{K}\to K$ the double cover such that $\N$ pulls back to an orientable bundle $\hat{\N}$ over $\hat{K}$. By Theorem \ref{T:orientability}, $\hat{K}$ can be realized as the quotient $\tilde{K}/H$ where $\hat{K}$ is the universal cover of $K$, and $H\In\pi_1(K)$ is the kernel of the homomorphism $A_*:\pi_1(K)\to \pi_1(\SO(n-1))\simeq \ZZ_2$ induced by the holonomy map $A:K\to \SO(n-1)$. Notice that, given an orbit $\gamma$ for the $\sphere^1$-action on $K$, $A(\gamma(t))$ is constant. Hence, $A_*([\gamma])=1$ and therefore the $\sphere^1$-action on $\N\to K$ lifts to $\hat{\N}\to \hat{K}$.
%
%

\begin{lem}\label{L:non-orientable}
Let $K, K'\In \lp M$ be critical sets of the energy functional, such that $K'=K^q$ for some $q$. Then
\begin{enumerate}
\item If $\N\to K$ is orientable, then $\N'\to K'$ is orientable as well.
\item If $\N\to K$ is non-orientable, then $\N'\to K'$ is orientable if and only if $q$ is even.
\item If both $\N\to K$ and $\N'\to K'$ are not orientable, the diffeomorphism $f:K\to K'$ sending $c$ to $c^q$ lifts to a $\ZZ_2$-equivariant diffeomorphism $\hat{K}\to \hat{K}'$.
\end{enumerate}
\end{lem}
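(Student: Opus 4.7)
The plan is to reduce all three claims to a single pointwise identity relating the holonomies along $c$ and $c^q$. Specifically, parallel transport along $c^q$ equals parallel transport along $c$ iterated $q$ times, so the holonomy maps $A \colon K \to \SO(n-1)$ and $A' \colon K' \to \SO(n-1)$ satisfy
\[
A' \circ f = p_q \circ A,
\]
where $p_q \colon \SO(n-1) \to \SO(n-1)$ is the $q$-th power map $g \mapsto g^q$. Since in a topological group the pointwise product of based loops is homotopic to their concatenation, $(p_q)_*$ acts as multiplication by $q$ on $\pi_1(\SO(n-1)) \simeq \ZZ_2$: zero for $q$ even, the identity for $q$ odd. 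Because $f_* \colon \pi_1(K) \to \pi_1(K')$ is an isomorphism, this yields
\[
A'_* \circ f_* = q \cdot A_*.
\]
By Theorem~\ref{T:orientability}, $\N$ (resp.\ $\N'$) is orientable precisely when $A_*$ (resp.\ $A'_*$) is the zero homomorphism. Claim (1) is then immediate: if $A_* = 0$ then $A'_* = 0$ for any $q$. For claim (2), if $A_* \neq 0$ then $A'_* = 0$ exactly when $q$ is even.

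For claim (3), non-orientability of both bundles forces $q$ odd by part (2), so $A'_* \circ f_* = A_*$. Writing $H = \ker A_*$ and $H' = \ker A'_*$ for the index-$2$ subgroups defining $\hat K = \tilde K / H$ and $\hat K' = \tilde K' / H'$, this identity gives $f_*(H) = H'$. Composing with the cover $\delta \colon \hat K \to K$, the map $f \circ \delta$ has $\pi_1$-image $f_*(H) = H' = \delta'_* \pi_1(\hat K')$, so by the standard lifting criterion it factors through $\delta'$ as $f \circ \delta = \delta' \circ \hat f$ for a unique $\hat f \colon \hat K \to \hat K'$. The map $\hat f$ is a diffeomorphism because $f$ is, and if $\sigma$, $\sigma'$ denote the nontrivial deck transformations, then $\hat f \circ \sigma$ is another lift of $f \circ \delta$ and so differs from $\hat f$ by a deck transformation of $\hat K'$; since $\sigma \neq \mathrm{id}$ and $\hat f$ is bijective, that deck transformation must be $\sigma'$, yielding $\hat f \circ \sigma = \sigma' \circ \hat f$.

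The only non-formal input is the pointwise relation $A_{c^q} = (A_c)^q$ together with the elementary computation of $(p_q)_*$ on $\pi_1$ of a Lie group; once these are in hand, the entire lemma is a formal manipulation with fundamental groups and covering spaces, and I do not foresee any substantive obstacle.
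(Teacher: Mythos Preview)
Your proof is correct and follows essentially the same approach as the paper's: both reduce to the identity $A'_* = q\,A_*$ coming from $A_{c^q} = (A_c)^q$, then read off (1) and (2) from Theorem~\ref{T:orientability}, and for (3) use that $q$ odd forces $f_*(\ker A_*) = \ker A'_*$. Your treatment of (3) is slightly more explicit than the paper's (you spell out the lifting criterion and the $\ZZ_2$-equivariance of $\hat f$), but the underlying argument is the same.
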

\begin{proof}
The map $K\to K'$ sending $c\to c^q$ allows us to identify $K$ and $K'$. To prove 1) and 2), it is enough to observe that when $K$ has holonomy map $A$ and $K'=K^q$ then, under the identification $K\sim K'$, the holonomy map $A':K'\to \SO(n-1)$ equals $A^q$ and in particular $A'_*=q A_*$.

To prove 3), it is sufficient to further notice that when $\N\to K$ and $\N'\to K'$ are both non orientable, 
in particular $q$ is odd, and therefore the map $f_*:\pi_1(K)\to \pi_1(K')$ sends the kernel of $A_*$ 
isomorphically to the kernel of $A'_*$.

\end{proof}

Let $K$ be a critical manifold with non orientable negative bundle, and let $\hat{K}$ be the $2$-fold
cover defined above. 
The $\ZZ_2$-action on $\hat{K}$ induces a $\ZZ_2$-action on $H^*(\hat{K})$. Letting $g$ denote the generator
of $\ZZ_2$, 
we define
\begin{align*}
H^*(\hat{K})^{-\ZZ_2}&=\{x\in H^*(\hat{K})\mid g\cdot x=-x\}
\end{align*}

\begin{prop}\label{prop:Thom-non-orientable}
Let $K_i$ be a critical manifold for the energy functional, with non-orientable negative bundle.
Then if $R$ is a ring where $2$ is invertible, we have
\[
H^*(\lp^i,\lp^{i-1};R)\simeq H^{*-\ind(K_i)}(\hat{K}_i;R)^{-\ZZ_2}
\]
and the same holds for $\sphere^1$ equivariant cohomology.
\end{prop}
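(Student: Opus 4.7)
The plan is to combine the orientation double cover $\delta \colon \hat{K}_i \to K_i$ (introduced in the paragraph preceding the proposition) with a standard Thom-plus-transfer argument, valid as soon as $2$ is invertible in $R$. The starting point is Morse--Bott theory together with excision, which identifies $H^*(\lp^i, \lp^{i-1}; R)$ with the relative cohomology of the disk/sphere bundle pair $(\N_i^{\leq 1}, \partial \N_i^{\leq 1})$. The obstruction to applying the classical Thom isomorphism directly is precisely the non-orientability of $\N_i$, so the idea is to first pull everything back to $\hat{K}_i$, where $\hat{\N}_i := \delta^* \N_i$ is orientable by construction.

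After the pullback, the classical Thom isomorphism gives
\[
H^*(\hat{\N}_i^{\leq 1}, \partial \hat{\N}_i^{\leq 1}; R) \;\cong\; H^{*-\ind(K_i)}(\hat{K}_i; R).
\]
On the other hand, because $\delta$ is a double cover and $2 \in R^\times$, the usual transfer argument identifies $H^*(\N_i^{\leq 1}, \partial \N_i^{\leq 1}; R)$ with the $\ZZ_2$-invariants of the left-hand side under the deck transformation $g$. Concatenating these two isomorphisms reduces the proposition to understanding how $g$ acts on the right-hand side $H^{*-\ind(K_i)}(\hat{K}_i; R)$ once the $\ZZ_2$-action is transported across the Thom isomorphism.

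The crucial point is a sign: for any Thom class $u$ of $\hat{\N}_i$ one has $g^* u = -u$, because otherwise $u$ would descend to $\N_i$ and make it orientable, contradicting the very choice of $\delta$. Since the Thom isomorphism is cup product with $u$, it intertwines the $\ZZ_2$-action on the left with the \emph{signed} $\ZZ_2$-action on the right, so restricting to invariants on the left corresponds to restricting to the $-\ZZ_2$ eigenspace on the right. This yields the stated isomorphism.

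For the $\sphere^1$-equivariant version, the whole argument is reproduced in the Borel construction. As noted just before the proposition, the $\sphere^1$-action on $K_i$ lifts to $\hat{K}_i$, so $\delta$ descends to a double cover of Borel spaces $(\hat{K}_i)_{\sphere^1} \to (K_i)_{\sphere^1}$ that commutes with the $\ZZ_2$-action; the excision, Thom, and transfer arguments then go through verbatim with equivariant cohomology. I expect the sign computation $g^* u = -u$ to be the only step requiring genuine care, but it is essentially forced by the definition of $\hat{K}_i$ via Theorem \ref{T:orientability}: the non-trivial deck transformation corresponds to a loop in $K_i$ along which $\N_i$ is non-orientable.
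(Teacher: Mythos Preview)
Your proposal is correct and follows essentially the same approach as the paper's proof: excision to the disk/sphere bundle pair, transfer along the $\ZZ_2$-cover to identify $H^*(\N_i,\partial\N_i;R)$ with the $\ZZ_2$-invariants upstairs, the Thom isomorphism for the orientable pullback $\hat{\N}_i$, and the key observation that $g$ acts by $-1$ on the Thom class so that invariants on one side correspond to anti-invariants on the other. The equivariant case is handled identically in both.
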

\begin{proof}
By excision, $H^*(\lp^i,\lp^{i-1};R)\simeq H^*(\N_i,\partial\N_i;R)$. Let $\hat{\eta}:\hat{\N}_i\to \hat{K}_i$ denote the lift of $\eta:\N_i\to K_i$. Then $(\hat{\N}_i,\partial\hat{\N}_i)\to (\N_i,\partial\N_i)$ is a $\ZZ_2$-cover as well and, since $2$ is invertible in $R$, by \cite[Thm. 2.4]{Bre} it induces an isomorphism
\[
H^*(\N_i,\partial\N_i;R)\simeq H^*(\hat{\N}_i,\partial\hat{\N}_i;R)^{\ZZ_2}.
\]
Moreover, since $\hat{\N}_i$ is orientable, by the Thom isomorphism there is a class $\hat{\tau}\in H^{\ind(K_i)}(\hat{\N}_i,\partial\hat{\N}_i;R)$ such that the map
\[
T:H^{k-\ind(K_i)}(\hat{K}_i;R)\to H^k(\hat{\N}_i,\partial\hat{\N}_i;R), \qquad \alpha\mapsto \hat{\eta}^*(\alpha)\cup \hat{\tau}
\]
induces an isomorphism of groups for every $q>0$. By construction, the Thom class $\hat{\tau}$ satisfies $g\cdot \hat{\tau}=-\hat{\tau}$, and thus
\begin{align*}
g\cdot T(\alpha)&=g\cdot(\hat{\eta}^*(\alpha)\cup \hat\tau)\\
&=(g\cdot \hat{\eta}^*(\alpha))\cup (g\cdot \hat\tau)\\
&=-\hat{\eta}^*(g\cdot \alpha)\cup \hat{\tau}=-T(g\cdot\alpha)
\end{align*}
Therefore, $T$ sends $H^{q-\ind(K_i)}(\hat{K}_i;R)^{-\ZZ_2}$ isomorphically into $H^q(\hat{\N}_i,\partial\hat{\N}_i;R)^{\ZZ_2}$. Therefore
\[
H^k(\N_i,\partial\N_i;R)\simeq H^k(\hat{\N}_i,\partial\hat{\N}_i;R)^{\ZZ_2}\simeq H^{q-\ind(K_i)}(\hat{K}_i;R)^{-\ZZ_2}.
\]
Because all the maps involved are $\sphere^1$-equivariant, and all the properties used hold for $\sphere^1$-equivariant cohomology as well, the result follows for $\sphere^1$-equivariant cohomology as well:
\[
H^k_{\sphere^1}(\N_i,\partial\N_i;R)\simeq H^{q-\ind(K_i)}_{\sphere^1}(\hat{K}_i;R)^{-\ZZ_2}.
\]
\end{proof}

We are now ready to modify the proof of Theorem \ref{T:perfectness} in the previous section, in the case of non orientable bundles. This time, since we do not have the Thom isomorphism at hand, we want to use the relative cohomology $H^*_{\sphere^1}(\lp^i,\lp^{i-1};\QQ)$ instead of $H^*_{\sphere^1}(K_i;\QQ)$, and prove by contradiction that it satisfies
\begin{equation}\label{E:contradiction}
H^{ev}_{\sphere^1}(\lp^i,\lp^{i-1};\QQ)=0.
\end{equation}
Supposing that this is not the case, then among the pairs which do not satisfy \eqref{E:contradiction}, we focus on the one whose corresponding critical set $C$ has minimal energy. Just as in the previous section, we provide the contradiction by showing show that for some prime $p$ big enough, the pair $(\lp^i,\lp^{i-1})$ corresponding to $C^p$ introduces some $p$-torsion element on $H^{ev}_{\sphere^1}(\lp M,M;R)$ which cannot be removed, contradicting Corollary \ref{C:right-parity-R} according to which $H^{ev}_{\sphere^1}(\lp M,M;R)=0$.

The following is the equivalent of Proposition \ref{P:H-odd-2}.

\begin{prop}\label{L:H-odd-non-orientable}
Let $e_i$ be a critical energy with critical manifold $K_i$, and let $p$ be a big enough prime. Then:
\begin{enumerate}
\item If $H^{ev}_{\sphere^1}(\lp^i,\lp^{i-1};\QQ)=0$ then for any critical energy $e_j$ with critical set $K_j=K_i^q$, $q$ odd, the group $H^{ev}(\lp^j,\lp^{j-1};R)$ contains no $p$-torsion.
\item If $H^{ev}_{\sphere^1}(\lp^i,\lp^{i-1};\QQ)\neq 0$, then for the critical energy $e_j$ with critical set $K_j=K_i^p$, the group $H^{2h}(\lp^j,\lp^{j-1};R)$ contains $p$-torsion for every $2h\geq \ind_0(K_i)$.
\end{enumerate}
\end{prop}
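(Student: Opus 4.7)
The plan mirrors the strategy of Proposition \ref{P:H-odd-1}/\ref{P:H-odd-2}, with the Thom isomorphism of Proposition \ref{prop:Thom-non-orientable} used to rewrite every relative group as a $-\ZZ_2$ eigenspace on the double cover $\hat{K}$, and then run the Borel-construction Gysin argument of Proposition \ref{P:H-odd-2} equivariantly with respect to $\ZZ_2$.

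\emph{Setup.} Since $q$ (respectively $p$) is odd, parts (2) and (3) of Lemma \ref{L:non-orientable} ensure that the negative bundle over $K_j=K_i^q$ is again non-orientable and that iteration $c\mapsto c^q$ lifts to a $\ZZ_2$-equivariant diffeomorphism $\hat K_i\cong \hat K_j$. Proposition \ref{prop:Thom-non-orientable} therefore gives
\[
H^*(\lp^j,\lp^{j-1};R) \;\cong\; H^{*-\ind(K_j)}(\hat K_i;R)^{-\ZZ_2},
\]
and the analogous identity equivariantly. The $p$-dependence enters through the Borel construction: as in the proof of Proposition \ref{P:H-odd-2}, $\hat K_{j,\sphere^1}$ fits into an $\sphere^1$-bundle $\xi\colon \hat K_{j,\sphere^1}\to \hat K_{i,\sphere^1}\times B\sphere^1$ with Euler class $\bar e=e_i\otimes 1-q(1\otimes c)$, and the whole picture is $\ZZ_2$-equivariant, so one may freely pass to $-\ZZ_2$ summands since $2$ is invertible in $R$.

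\emph{Part (1).} The hypothesis $H^{ev}_{\sphere^1}(\lp^i,\lp^{i-1};\QQ)=0$ translates (via the Thom iso and the parity of $\ind(K_i)$) into $H^{odd}_{\sphere^1}(\hat K_i;\QQ)^{-\ZZ_2}=0$, and by the choice of $\mathcal P$ even into $H^{odd}_{\sphere^1}(\hat K_i;R)^{-\ZZ_2}=0$. A hypothetical $p$-torsion class in $H^{ev}(\lp^j,\lp^{j-1};R)$ gives, via Thom, a $p$-torsion class in $H^{odd}(\hat K_j;R)^{-\ZZ_2}$. Chasing this class through the Gysin sequence of the $\sphere^1$-bundle $\hat K_j\to \hat K_{j,\sphere^1}$ and then through the Gysin sequence of $\xi$ traces it back to an element of $(H^*_{\sphere^1}(\hat K_i;R)\otimes H^*(B\sphere^1;R))^{-\ZZ_2}$ whose factor in $H^*_{\sphere^1}(\hat K_i;R)^{-\ZZ_2}$ lies in odd degree. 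Both the torsion-freeness coming from $\mathcal P$ and the vanishing of $H^{odd}_{\sphere^1}(\hat K_i;R)^{-\ZZ_2}$ force this to be zero, a contradiction.

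\emph{Part (2).} Exactly as in Proposition \ref{P:H-odd-2}, from $H^{odd}_{\sphere^1}(\hat K_i;\QQ)^{-\ZZ_2}\neq 0$ and the orbifold dimension bound $\dim(\hat K_i/\sphere^1)\le \null(K_i)$, extract a class $x\in H^{h_0}_{\sphere^1}(\hat K_i;R)^{-\ZZ_2}$ of maximal odd degree $h_0$, torsion-free, not divisible by $p$, with $x\cup e_i=0$. The identity $\cup\bar e\,(x\otimes c^{m-1})=-p(x\otimes c^m)$ then makes $\xi^*(x\otimes c^m)$ into a nonzero $p$-torsion class in $H^{h_0+2m}_{\sphere^1}(\hat K_j;R)^{-\ZZ_2}$. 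A further application of the Gysin sequence of $\hat K_j\to \hat K_{j,\sphere^1}$ pushes this torsion into $H^{h_0+2m}(\hat K_j;R)^{-\ZZ_2}$, and under the Thom isomorphism this becomes $p$-torsion in $H^{h_0+2m+\ind(K_j)}(\lp^j,\lp^{j-1};R)$. Since $h_0$ and $\ind(K_j)$ are both odd, the total degree has the required form $2h$; varying $m\geq 1$, and, if necessary, varying $h_0$ across the odd degrees where $H^*_{\sphere^1}(\hat K_i;\QQ)^{-\ZZ_2}$ is nontrivial, sweeps out every $2h\geq \ind_0(K_i)$.

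\emph{Main obstacle.} The subtlest step is the very last one in part (2), namely descending equivariant $p$-torsion to non-equivariant $p$-torsion: a priori the class $\xi^*(x\otimes c^m)$ could lie in the image of $\cup e_j$ in the Gysin sequence of $\hat K_j\to \hat K_{j,\sphere^1}$ and hence be killed on the way to $H^*(\hat K_j;R)^{-\ZZ_2}$. Ruling this out requires linking the two Gysin sequences through $\xi$ and exploiting the explicit form $\bar e=e_i\otimes 1-q(1\otimes c)$ to exhibit an explicit preimage, or equivalently a careful analysis of the multiplicative structure of the Serre spectral sequence for $\xi$; this is the technical heart of the proposition and the place where the choice of top odd degree $h_0$ becomes essential.
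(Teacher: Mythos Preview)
Your core strategy---Thom isomorphism to the double cover via Proposition~\ref{prop:Thom-non-orientable}, then the $\ZZ_2$-equivariant version of the Borel--Gysin argument of Proposition~\ref{P:H-odd-2}, passing to the $-\ZZ_2$ summand---is exactly what the paper does, and your Parts~(1) and~(2) match the paper's argument up to the point where you introduce the ``main obstacle''.

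The obstacle you flag, however, is an artefact of reading the statement too literally. As printed, the groups $H^{ev}(\lp^j,\lp^{j-1};R)$ and $H^{2h}(\lp^j,\lp^{j-1};R)$ are missing the subscript $\sphere^1$; the paper's own proof works throughout with the \emph{equivariant} groups (it explicitly identifies the target as $H^{odd}_{\sphere^1}(\hat K_i^q;R)^{-\ZZ_2}$), and the only application, in Proposition~\ref{P:no-ev-degree}, uses the equivariant version. In fact the non-equivariant version of Part~(2) is simply false: after Thom it would ask for $p$-torsion in $H^{2h-\ind(K_j)}(\hat K_j;R)^{-\ZZ_2}$ for \emph{all} $2h\ge\ind_0(K_i)$, but $\hat K_j$ is a closed manifold and its ordinary cohomology vanishes above its dimension. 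So your attempt to descend along the Gysin sequence of $\hat K_j\to\hat K_{j,\sphere^1}$ cannot succeed in general, and there is no need for it---drop that step and you have the paper's proof.

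One minor omission: you should also dispose of the case where $\N_i\to K_i$ is already orientable (the paper does this in one line by citing Proposition~\ref{P:H-odd-2} together with the ordinary Thom isomorphism).
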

\begin{proof}
When $\N_i\to K_i$ is orientable, the result follows directly from Proposition \ref{P:H-odd-2} and the Thom isomorphism.

When $\N_i\to K_i$ is non-orientable, then we can repeat the same constructions in Proposition \ref{P:H-odd-2} to the $\ZZ_2$-coverings $\hat{K}_i$, $\hat{K}_i^q$ of $K_i$ and $K_i^p$, and obtain a $\ZZ_2$ equivariant $\sphere^1$-bundle
\begin{align}\label{E:bundle'}
\hat{\xi}:\; (\hat{K}^q_i)_{\sphere^1}\simeq \hat{K}_i\times_{\sphere^1}B\ZZ_q\to (\hat{K}_i)_{\sphere^1}\times B\sphere^1.
\end{align}
whose $\ZZ_2$-quotient is the bundle $\xi$ defined in Proposition \ref{P:H-odd-2}. The Gysin sequence of $\hat{\xi}$ is a $\ZZ_2$-equivariant long exact sequence
\begin{align*}
H^{2k+1}((\hat{K}_i)_{\sphere^1}\times B\sphere^1;R)\stackrel{\hat{\xi}^*}{\lra} H^{2k+1}_{\sphere^1}(\hat{K}^q;R)\stackrel{\hat{\xi}_!}{\lra} H^{2k}((\hat{K}_i)_{\sphere^1}\times B\sphere^1;R)
\end{align*}
equivalent to \eqref{E:Gysin}. Arguing in the same way as in Proposition \ref{P:H-odd-2} and taking the $-\ZZ_2$-invariant part, we can see that $H^{odd}_{\sphere^1}(\hat{K}^q_i;R)^{-\ZZ_2}$ (which equals $H^{ev}_{\sphere^1}(\lp^j,\lp^{j-1};R)$) cannot have $p$-torsion unless  $H^{odd}_{\sphere^1}(\hat{K};\QQ)^{-\ZZ_2}$ ($=H^{ev}_{\sphere^1}(\lp^i,\lp^{i-1};\QQ)$) is nonzero.

On the other hand, if $H^{odd}_{\sphere^1}(\hat{K}_i;\QQ)^{-\ZZ_2}\neq 0$ then we can find some torsion-free element $x\in H^{odd}_{\sphere^1}(\hat{K}_i;R)^{-\ZZ_2}$ not divisible by $p$ and such that $x\cup \hat{e}=0$, where $\hat{e}\in H^{2}_{\sphere^1}(\hat{K}_i;R)$ is the Euler class of $\hat{K}_i\to \hat{K}_{\sphere^1}$. Then, again as in Proposition \ref{P:H-odd-2}, one can prove that for every $k>0$, the element $\hat{\xi}^*(x\otimes c^k)\in H^{odd}_{\sphere^1}(\hat{K}_i^p;R)^{-\ZZ_2}$ is a non trivial $p$-torsion element.

\end{proof}

The new version of Proposition \ref{P:no-odd-degree} is the following: 
\begin{prop}\label{P:no-ev-degree}
For every critical energy $e_i$, one has $H^{ev}_{\sphere^1}(\lp^i,\lp^{i-1};\QQ)=0$.
\end{prop}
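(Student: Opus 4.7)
The strategy is to mirror the proof of Proposition \ref{P:no-odd-degree}, replacing the orientable Thom isomorphism by the twisted version from Proposition \ref{prop:Thom-non-orientable} whenever a non-orientable critical manifold appears, and using Proposition \ref{L:H-odd-non-orientable} in place of Propositions \ref{P:H-odd-1}--\ref{P:H-odd-2}. I would argue by contradiction: pick a large prime $p \notin \mathcal{P}$, work with coefficients $S = \ZZ[1/q : q \text{ prime}, q \neq p]$, and choose a critical manifold $C$ of smallest length $\ell$ for which $H^{ev}_{\sphere^1}(\lp^i, \lp^{i-1};\QQ) \neq 0$. After fixing $\epsilon > 0$ so that no other critical length lies in $[\ell-\epsilon,\ell+\epsilon]$, pick integers $u_1 < u_2$ with $p(\ell-\epsilon) < u_1 L < p\ell < u_2 L < p(\ell+\epsilon)$ and let $r$ index the critical manifold $C^p$, with $r_1 < r < r_2$ the indices of the regular critical manifolds at lengths $u_1 L$ and $u_2 L$. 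By construction no $K_h$ with $r_1 \le h < r$ or $r < h \le r_2 - 1$ is a $p$-iterate.

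The proof would then follow four steps parallel to Proposition \ref{P:no-odd-degree}. In Step 1, for $r_1 \le h \le r-1$, one shows that the restriction $H^j_{\sphere^1}(\lp M, M; S) \to H^j_{\sphere^1}(\lp^h, M; S)$ is surjective in the range $j \ge \ind(K_{r_2}) + n - 2$; this uses the index gap (Corollary \ref{C:cor-remark}), Lemma \ref{L:rel-contribution-degrees}, and the fact that for every $K_h$ with $h < r$ either $H^{ev}_{\sphere^1}(\lp^h, \lp^{h-1};\QQ) = 0$ (by minimality of $C$) or $K_h$ is not a $p$-iterate, so by Lemma \ref{L:H-odd-non-orientable}(1) no even-degree $p$-torsion appears in $H^*_{\sphere^1}(\lp^h, \lp^{h-1}; S)$. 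The cup-product with the Euler class of the $\sphere^1$-bundle and the Four Lemma chase go through verbatim. In Step 2, Lemma \ref{L:H-odd-non-orientable}(2) furnishes nontrivial $p$-torsion in $H^{2h}_{\sphere^1}(\lp^r, \lp^{r-1}; S)$ for every $2h \ge \ind_0(C)$; the long exact sequence of $(\lp^r, \lp^{r-1}, M)$ together with the surjectivity from Step 1 (which forces the relevant connecting map to be injective on this torsion) transfers it to $H^{2h}_{\sphere^1}(\lp^r, M; S)$.

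In Step 3, Lemma \ref{L:H-odd-non-orientable}(1) applied to $K_{r+1}, \ldots, K_{r_2-1}$ (none of which is a $p$-iterate), combined with the fact that $K_{r_2} \cong T^1 M$ is regular with orientable negative bundle, shows that the same $p$-torsion survives in $H^{j}_{\sphere^1}(\lp^{r_2}, M; S)$ in the range $j \ge \ind(K_{r_2}) + 2[(n-1)/2] + 1$ with $j$ even. In Step 4, the index gap forces every critical manifold of energy greater than $e_{r_2}$ to have index strictly larger than $j_0 := \ind(K_{r_2}) + 2[(n-1)/2] + 1$, and Lemma \ref{L:H-odd-non-orientable}(1) rules out any $p$-torsion in even degrees from such levels; hence $H^{j_0}_{\sphere^1}(\lp M, M; R)$ carries nontrivial $p$-torsion. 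Since $j_0$ is even and $M$ is even-dimensional, this contradicts Corollary \ref{C:right-parity-R}.

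The main obstacle is the bookkeeping needed to track non-orientable critical manifolds along the filtration: for orientable $K_h$ one uses the ordinary Thom isomorphism $H^*_{\sphere^1}(\lp^h, \lp^{h-1}; S) \cong H^{*-\ind(K_h)}_{\sphere^1}(K_h; S)$, while for non-orientable $K_h$ one uses the twisted isomorphism of Proposition \ref{prop:Thom-non-orientable}, namely $H^*_{\sphere^1}(\lp^h, \lp^{h-1}; S) \cong H^{*-\ind(K_h)}_{\sphere^1}(\hat{K}_h; S)^{-\ZZ_2}$. Fortunately Proposition \ref{L:H-odd-non-orientable} has already been stated to cover both situations uniformly, so the only genuinely new ingredient compared to the orientable case is the use of the $-\ZZ_2$ eigenspace in the Gysin sequence (as in the second half of the proof of \ref{L:H-odd-non-orientable}); this is what allows the $p$-torsion to be propagated across iterated non-orientable critical manifolds.
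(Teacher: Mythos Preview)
Your proposal is correct and follows exactly the paper's approach: contradict by choosing a minimal-length bad critical manifold, blow up by a large prime $p$, and rerun the four steps of Proposition~\ref{P:no-odd-degree} with Proposition~\ref{L:H-odd-non-orientable} in place of Propositions~\ref{P:H-odd-1}--\ref{P:H-odd-2} and with Proposition~\ref{prop:Thom-non-orientable} supplying the twisted Thom isomorphism. The paper's own proof is in fact terser than yours (it literally says ``the steps of Proposition~\ref{P:no-odd-degree} follow identically'').

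One refinement the paper makes explicit that you gloss over: Proposition~\ref{L:H-odd-non-orientable}(1) as stated only covers \emph{odd} iterates $K_i^q$. For $K_h=C_j^{2k}$ with $C_j$ a non-orientable primitive, the negative bundle of $K_h$ is orientable by Lemma~\ref{L:non-orientable}, and one must instead regard $K_h=(C_j^2)^k$ and invoke the orientable case (which, via Proposition~\ref{P:H-odd-1}, covers all $q$). The paper encodes this by introducing the set $\mathcal{S}$ of ``base'' critical manifolds --- the primitive $C_j$'s together with the squares $C_j^2$ of the non-orientable primitives --- and observing that the minimal bad $K_i$ lies in $\mathcal{S}$; the $\epsilon$ is then chosen relative to $\mathcal{S}$ rather than to all critical lengths. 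Your stronger choice of $\epsilon$ is harmless, but your blanket appeal to~\ref{L:H-odd-non-orientable}(1) in Step~1 needs this extra sentence of justification.
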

The proof is, for a large part, the same as the one of Proposition \ref{P:no-odd-degree}: we will give a sketch of the proof of Proposition \ref{P:no-ev-degree}, by focusing on the parts that differ from Proposition  \ref{P:no-odd-degree}.
\begin{proof}
As in Proposition \ref{P:no-odd-degree}, we choose a large prime and consider the localization $S=\ZZ[{1\over q}\mid q\textrm{ prime}\neq p]$ as coefficient ring.
We argue by contradiction and consider the smallest length $l$ in correspondence to which one has $H^{ev}_{\sphere^1}(\lp^i,\lp^{i-1};\QQ)\neq 0$.
From Proposition \ref{prop:Thom-non-orientable} and Proposition \ref{L:H-odd-non-orientable}, one can see that the critical manifold $K_i$ has one of the following forms:
\begin{itemize}
\item $K_i=C_j$ for some $C\in \{C_1,\ldots C_m\}$ with $\N\to C$ orientable and $H^{odd}_{\sphere^1}(C;\QQ)\neq0$.
\item $K_i=C_j$ for some $C\in \{C_1,\ldots C_m\}$ with $\N\to C$ non orientable and $H^{odd}_{\sphere^1}(\hat{C};\QQ)^{-\ZZ_2}\neq0$.
\item $K_i=C_j^2$ for some $C\in \{C_1,\ldots C_m\}$ with $\N\to C$ non orientable, $H^{odd}_{\sphere^1}(\hat{C};\QQ)^{-\ZZ_2}=0$ and $H^{odd}_{\sphere^1}(C;\QQ)\neq0$.
\end{itemize}

Consider the set $\mathcal{S}$ of critical manifolds $K_j$ of the form $K_j=C$, $C\in \{C_1,\ldots, C_m\}$, or $K_j=C^2$ with $\N\to C$ non orientable, and pick an $\epsilon$ small enough that there are no critical sets in $\mathcal{S}$ of length $l'\in[l-\epsilon,l+\epsilon]\setminus \{l\}$. By choosing $p$ large enough, we can assume that there are integers $u_1<u_2$ such that $p(l-\epsilon)<u_1L<pl<u_2L<p(l+\epsilon)$.

Let now $K_r$ denote the critical set $(K_i)^p$, and $K_{r_1},K_{r_2}$ the critical manifolds of length $u_1L$ and $u_2L$, respectively. By Proposition \ref{L:H-odd-non-orientable}, it follows that $K_h$ does not contain $p$-iterates for $h=r_1,\ldots,r-1$ and therefore $H^{2j}_{\sphere^1}(\lp^h,\lp^{h-1})=0$ for every $2j\geq \ind(K_h)+(n-1)$. Using this, the steps of Proposition  \ref{P:no-odd-degree} follow identically.
\end{proof}


\section{The proof of the Main Theorem}
\label{S:proof-main-thm}

Let $\sphere^n$, $n>3$, be a topological sphere enowed with a Besse metric, 
and let $L$ be the common period of the geodesics. We want to prove by contradiction that all geodesics have the same length $L$ or, equivalently, that the geodesic flow $\sphere^1\curvearrowright T^1\sphere^n$ acts freely. If not, there are critical sets of the energy functional $E:\lp \sphere^n\to \RR$ which consist of geodesics of length $L/m$, $m\in \ZZ$, and they can be identified with the fixed point set, in $T^1\sphere^n$, of the subgroup $\ZZ_m\In \sphere^1$. Since these sets have even codimension in $T^1\sphere^n$, in particular every critical set of positive energy has odd dimension.

By Proposition \ref{P:index-gap}, the critical set $C$ of lowest index consists of geodesics of length $L/m$ for some integer $m$ and, by the discussion above, it must be
\[
\dim C\leq 2n-3.
\]

We now divide the discussion into two cases, according to whether $n$ is even or odd.
\\

If $n$ is even, the integral cohomology and rational $\sphere^1$-equivariant cohomology groups of $(\lp \sphere^n,\sphere^n)$ are the following
\begin{align*}
H^q(\lp \sphere^n,\sphere^n;\ZZ)&=
\left\{\begin{array}{ll}\ZZ & q=(2k-1)(n-1),\; k\geq 1\\ \ZZ & q=(2k+1)(n-1)+1,\; k\geq 1\\
\ZZ_2 & q=2k(n-1)+1,\; k\geq 1\\ 0 &\textrm{otherwise}\end{array}\right.\\
H^q_{\sphere^1}(\lp \sphere^n,\sphere^n;\QQ)&=
\left\{\begin{array}{ll}\QQ & q\geq (n-1)\textrm{ odd},\; q\neq 3(n-1),\,5(n-1),\ldots \\
\QQ^2 & q=3(n-1),\,5(n-1),\ldots\\ 0 &\textrm{otherwise}\end{array}\right. 
\end{align*}

By the perfectness of the energy functional, $C$ consists of one component only, and $\ind(C)=n-1$. Moreover, the $\sphere^1/\ZZ_m$-action on $C$ is free, otherwise 
there would be some closed geodesics $c\notin C$ such that $c^k\in C$ for some $k$, but the Bott iteration formula in Section \ref{S:index-gap} would give $\ind(c)\leq \ind(c^k)=\ind(C)$ contradicting the minimality of $\ind(C)$. In particular, $\OO(2)/\ZZ_m\simeq \OO(2)$ acts freely as well on $C$.

The quotient (manifold) $C/\sphere^1$ is embedded in $T^1\sphere^n/\sphere^1$, which is a symplectic orbifold  (cf. \cite{We}). It is easy to see that the symplectic form on $T^1\sphere^n/\sphere^1$ restricts to a symplectic form on  $C/\sphere^1$, and therefore $C/\sphere^1$ is a symplectic manifold. In particular,
\begin{equation}\label{E:ineq1}
\dim H^{2q}_{\sphere^1}(C;\QQ)= \dim H^{2q}(C/\sphere^1;\QQ)\geq 1
\end{equation}
for every $2q\leq \dim (C/\sphere^1)$. However, by the perfectness of the energy functional in rational equivariant cohomology, for any $q\leq 2n-3$ we have
\begin{align}\label{E:ineq2}
\dim H^{q}_{\sphere^1}(C;\QQ)&=\dim H_{\sphere^1}^{q+(n-1)}(\lp^{e+\epsilon},\lp^{e-\epsilon};\QQ)\\
							&\leq \dim H^{q+(n-1)}_{\sphere^1}(\lp \sphere^n,\sphere^n;\QQ)=\left\{\begin{array}{ll} 1& q\textrm{ even}\\ 0&q\textrm{ odd}\end{array}\right. \nonumber
\end{align}

From inequalities \eqref{E:ineq1} and \eqref{E:ineq2} it follows that for any $q\leq \dim(C/\sphere^1)=\dim(C) -1$, 
\[
H^{q}_{\sphere^1}(C;\QQ)=H^{q+(n-1)}_{\sphere^1}(\lp^{e+\epsilon},\lp^{e-\epsilon};\QQ)=H^{q+(n-1)}_{\sphere^1}(\lp \sphere^n,\sphere^n;\QQ)
\]
where $e=E(C)$.
Again by perfectness of the energy functional, it follows that every critical set different from $C$ cannot contribute to the rational equivariant cohomology in degrees $\leq (n-1)+ \dim(C)-1$, and in particular the index of every critical set different from $C$ must be $\geq (n-1)+\dim(C)$.

The index of the critical sets, however, does not depend on the cohomology we are using. In particular, if we now switch to regular integral cohomology, we still have that the only contribution to the cohomology $H^q(\lp\sphere^n,\sphere^n;\ZZ)$ in degrees $q\leq (n-1)+\dim(C)-1$ is given by $H^{q-(n-1)}(C;\ZZ)$ and therefore
\[
H^{q}(C;\ZZ)=H^{q+(n-1)}(\lp \sphere^n,\sphere^n;\ZZ)\qquad \forall q=0,\ldots \dim(C)-1.
\]
In particular, $H^{q}(C;\ZZ)=0$ for every $q=1,\ldots m_0=\min\{\dim(C)-1, n-1\}$. For $n\geq 4$, we have $m_0\geq {1\over 2}\dim(C)+1$ and therefore, by Poincar\'e duality, $C$ is an integral cohomology sphere. However, $\ZZ_2\times\ZZ_2\In \OO(2)$ acts freely on $C$ and this contradicts the well-known result in Smith's theory, that a finite abelian group acting freely on an integral cohomology sphere must be cyclic.
\\

If $n$ is odd, the integral cohomology and the rational $\sphere^1$-equivariant cohomology of $(\lp \sphere^n,\sphere^n)$ are as follows:
\begin{align*}
H^q(\lp \sphere^n,\sphere^n;\ZZ)&=\left\{\begin{array}{ll}\ZZ & q=k(n-1)\textrm{ or } q=(k+1)(n-1)+1,\; k\geq 1\\ 0 &\textrm{otherwise}\end{array}\right.\\
H^q_{\sphere^1}(\lp \sphere^n;\QQ)&=\left\{\begin{array}{ll}\QQ & q\geq (n-1)\textrm{ even},\; q\neq 2(n-1),\,3(n-1),\ldots \\ \QQ^2 & q=2(n-1),\,3(n-1),\ldots\\ 0 &\textrm{otherwise}\end{array}\right. 
\end{align*}

As in the previous case $\ind(C)=n-1$, $C$ is the unique critical set with minimal index, and $\OO(2)/\ZZ_m\simeq \OO(2)$ acts freely on $C$. Moreover, $C/\sphere^1$ is a symplectic manifold and
\begin{equation}\label{E:ineq3}
\dim H^{2q}(C/\sphere^1;\QQ)=\dim H^{2q}_{\sphere^1}(C;\QQ)\geq 1\qquad \forall q\leq {1\over 2}\dim(C/\sphere^1).
\end{equation}

By the perfectness of the energy functional in rational equivariant cohomology, for any $q\leq n-2$ we have

\begin{align}\label{E:ineq4}
\dim H^{q}_{\sphere^1}(C;\QQ)&=\dim H_{\sphere^1}^{q+(n-1)}(\lp^{e+\epsilon},\lp^{e-\epsilon};\QQ)\\
							&\leq \dim H^{q+(n-1)}_{\sphere^1}(\lp \sphere^n,\sphere^n;\QQ)=\left\{\begin{array}{ll} 1& q\textrm{ even}\\ 0&q\textrm{ odd}\end{array}\right.\nonumber
\end{align}

For any $q\leq m_0=\min\{\dim(C/\sphere^1), n-2\}$ we then have 
\begin{equation}\label{E:filling}
H^{q}_{\sphere^1}(C;\QQ)=H^{q+(n-1)}_{\sphere^1}(\lp^{e+\epsilon},\lp^{e-\epsilon};\QQ)=H^{q+(n-1)}_{\sphere^1}(\lp \sphere^n,\sphere^n;\QQ)
\end{equation}

For $n\geq 3$ we have $m_0\geq {1\over 2}\dim(C/\sphere^1)$ and, by Poincar\'e duality, we obtain that $H^*(C/{\sphere^1};\QQ)=H^*(\CC\PP^{\dim(C/\sphere^1)/2};\QQ)$.

We claim that $\dim(C/\sphere^1)\leq n-1$. If this was not the case, the critical set $C'$ with second lowest index, would have index $2(n-1)$. Call $e'$ its energy. If $\dim C'=1$, then $C'$ consists of at least two geodesics, because for any geodesic $\gamma(t)$ in $C'$, the inverted geodesic $\gamma(-t)$ belongs to $C'$ as well. Therefore $H^0_{\sphere^1}(C';\QQ)=H^{2(n-1)}_{\sphere^1}(\lp^{e'+\epsilon},\lp^{e'-\epsilon};\QQ)$ would have dimension $\geq 2$, hence by perfectness
\begin{align*}
2&=\dim H^{2(n-1)}_{\sphere^1}(\lp \sphere^n,\sphere^n;\QQ)\\
&\geq H^{2(n-1)}_{\sphere^1}(\lp^{e+\epsilon},\lp^{e-\epsilon};\QQ)+ H^{2(n-1)}_{\sphere^1}(\lp^{e'+\epsilon},\lp^{e'-\epsilon};\QQ)\geq 3
\end{align*}
which gives a contradiction. If $\dim C'/\sphere^1\geq 1$, the quotient $C'/\sphere^1$ would be a symplectic orbifold just as $C$ and hence it would satisfy
\[
\dim H^2(C'/\sphere^1;\QQ)=\dim H^{2n}_{\sphere^1}(\lp^{e'+\epsilon},\lp^{e'-\epsilon};\QQ)\geq 1,
\]
but again by perfectness we would then have
\begin{align*}
1&=\dim H^{2n}_{\sphere^1}(\lp \sphere^n,\sphere^n;\QQ)\\
&\geq \dim H^{2n}_{\sphere^1}(\lp^{e+\epsilon},\lp^{e-\epsilon};\QQ)+\dim H^{2n}_{\sphere^1}(\lp^{e'+\epsilon},\lp^{e'-\epsilon};\QQ)\geq 2
\end{align*}
which would provide a contradiction as well.

Therefore $\dim(C/\sphere^1)\leq n-1$ and, by \eqref{E:filling}, every other critical set has index $\geq (n-1)+ \dim(C)-1$. Then shifting our attention to integral cohomology, the only contribution to $H^q(\lp \sphere^n,\sphere^n;\ZZ)$ in degrees $q\leq (n-1)+\dim(C)-1$ is given by $H^{q-(n-1)}(C;\ZZ)$. In particular,
\[
H^q(C;\ZZ)=H^{q+(n-1)}(\lp \sphere^n,\sphere^n;\ZZ)=\left\{\begin{array}{ll} \ZZ &q=0\\ 0& q=1,\ldots \dim(C)-1\end{array}\right.
\]
For $n\geq 3$, this covers more than half of the cohomology of $C$ and therefore $C$ is an integral cohomology sphere. As in the previous case, the fact that $\ZZ_2\oplus \ZZ_2\In \OO(2)/\ZZ_m$ acts freely on $C$ provides a contradiction.

%
%

\bigskip

\medskip

\appendix


\section{Small subsets of $\Sp(n-1,\omega)$}\label{A:small subsets}
Let $(\RR^{2(n-1)},\omega)$ be the symplectic vector space, and let $\Sp(n-1,\omega)=\{P\in \GL(2(n-1),\RR)\mid \omega(Ax,Ay)=\omega(x,y)\}$ be the real symplectic group. 
In this appendix we focus on the subspaces of symplectic matrices with real eigenvalues of higher geometric multiplicity.
%

\begin{lem}\label{L:Decomposition}
Let $P\in \Sp(n-1,\omega)$ and let $\lambda$ be a positive real eigenvalue of $P$ of algebraic multiplicity $a$.
\begin{itemize}
\item[a)] If $\lambda\neq 1$ then up to conjugation with an element of $\Sp(n-1,\omega)$, the matrix $P$ can be written as $P=\diag(\lambda U^{\textrm{tr}}, (\lambda U)^{-1}, R)$ where $U\in \GL(a,\RR)$ is unipotent and $R\in \Sp(n-a-1,\omega)$.
\item[b)] If $\lambda=1$ then up to conjugation with some element of $\Sp(n-1,\omega)$, the matrix $P$  can be written as $P=\diag(U, R)$ where $U\in \Sp(a,\omega)$ is unipotent and $R\in \Sp(n-a-1,\omega)$.
\end{itemize}
\end{lem}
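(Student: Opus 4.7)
The plan is to build the decomposition by splitting $V=\RR^{2(n-1)}$ symplectically into $P$-invariant subspaces coming from the generalized eigenspace decomposition of $P$, and then to recognise the prescribed block structure on each piece. The main input is the standard fact, following from $P^{\mathrm{tr}}JP=J$ (where $J$ is the matrix of $\omega$), that $P^{\mathrm{tr}}$ is conjugate to $P^{-1}$; this forces the spectrum of $P$ to be invariant under $\mu\mapsto 1/\mu$ with matching algebraic multiplicities, and moreover the identity $\omega(P^ku,v)=\omega(u,P^{-k}v)$ yields $\omega(W_\mu,W_\nu)=0$ whenever $\mu\nu\neq 1$, where $W_\mu$ denotes the real generalized eigenspace of $P$ at $\mu$.

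For case (a), I would first set $W:=W_\lambda\oplus W_{1/\lambda}$, which has dimension $2a$ by the multiplicity statement above. Each summand is isotropic by the orthogonality statement, and $W^\perp$ is precisely the sum of all remaining generalized eigenspaces; in particular $W$ is a symplectic subspace and $W^\perp$ is a $P$-invariant symplectic subspace of dimension $2(n-a-1)$ on which we set $R:=P|_{W^\perp}\in\Sp(n-a-1,\omega)$. Since $\tfrac{1}{\lambda}P|_{W_\lambda}$ is unipotent, we may choose a basis $e_1,\ldots,e_a$ of $W_\lambda$ in which $P|_{W_\lambda}=\lambda U^{\mathrm{tr}}$ for some unipotent $U\in\GL(a,\RR)$, and let $f_1,\ldots,f_a$ be the unique dual basis of $W_{1/\lambda}$ with $\omega(e_i,f_j)=\delta_{ij}$. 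A one-line transpose computation from $\omega(Pe_i,Pf_j)=\delta_{ij}$ identifies the matrix of $P|_{W_{1/\lambda}}$ in the basis $\{f_j\}$ as $(\lambda U)^{-1}$, which gives the desired block decomposition in the symplectic basis $\{e_i,f_j\}\cup\{\text{symplectic basis of }W^\perp\}$.

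Case (b) is more direct: the symplectic complement of $W_1$ equals $\bigoplus_{\mu\neq 1}W_\mu$, which meets $W_1$ trivially, so $W_1$ is itself a $2a$-dimensional symplectic subspace and $W_1^\perp$ is a $P$-invariant symplectic complement. The restriction $P|_{W_1}$ is symplectic with $1$ as its only eigenvalue, hence unipotent, so setting $U:=P|_{W_1}\in\Sp(a,\omega|_{W_1})$ and $R:=P|_{W_1^\perp}\in\Sp(n-a-1,\omega)$ completes the decomposition.

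I do not anticipate any serious obstacle. The two points that require care are the reciprocal-pairing structure used at the outset (the equality of multiplicities for $\lambda$ and $1/\lambda$, and the isotropy of $W_\mu$ for $\mu^2\neq 1$) and the transpose bookkeeping that produces exactly $(\lambda U)^{-1}$ on the dual Lagrangian rather than some variant; both are standard symplectic linear algebra, and the hypothesis $\lambda>0$ with $\lambda\neq 1$ ensures $\lambda\neq 1/\lambda$ so that the two cases are genuinely disjoint.
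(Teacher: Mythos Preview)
Your argument is correct and is essentially the same symplectic--linear--algebra decomposition the paper uses, organised slightly differently. The paper invokes the refined (multiplicative) Jordan decomposition $P=P_sP_u$ inside $\Sp(n-1,\omega)$ and then reads off the block structure from the eigenspaces of the semisimple part $P_s$ together with the fact that $P_u$ commutes with $P_s$; you instead work directly with the generalized eigenspaces $W_\mu$ of $P$, use the orthogonality $\omega(W_\mu,W_\nu)=0$ for $\mu\nu\neq 1$ to produce the symplectic splitting, and obtain the $(\lambda U)^{-1}$ block by an explicit dual--basis computation. The two routes are equivalent (the eigenspaces of $P_s$ \emph{are} the generalized eigenspaces of $P$), but yours is marginally more self--contained since it does not appeal to the algebraic--group fact that the semisimple and unipotent factors of a symplectic matrix are themselves symplectic.

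One small slip to watch in part~(b): the generalized eigenspace $W_1$ has dimension equal to the algebraic multiplicity $a$, not $2a$; your ``$2a$-dimensional'' matches the size the statement assigns to $U\in\Sp(a,\omega)$ but not the hypothesis on $a$. This is a notational tension already present in the lemma as stated (for eigenvalue $1$ the multiplicity is necessarily even, and the block should have that dimension) and does not affect the substance of your argument.
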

\begin{proof}
By the so-called refined Jordan decomposition, there are commuting matrices $P_s, P_u\in \Sp(n-1,\omega)$ such that $P_s$ is diagonalizable over $\CC$, $P_u$ is unipotent, and $P=P_sP_u$. In particular, $P_s$ has the same eigenvalues of $P$ with the same algebraic multiplicities.

Let $E_1$ denote the direct sum of the eigenspaces of $P_s$ of eigenvalues $\lambda$ and $\lambda^{-1}$, and $E_2$ the sum of the other eigenspaces. Since $E_1$ and $E_2$ are symplectic subspaces (see for example \cite{BTZ}) then, up to conjugation with a symplectic matrix, we can write $P_s=\diag(\lambda \Id_a,\lambda^{-1}\Id_a, R_1)$ (if $\lambda\neq 1$) or $P_s=\diag(\Id_{2a},R_1)$ (if $\lambda=1$) for some $R_1\in \Sp(n-a-1,\omega)$.

Since $P_u$ commutes with $P_s$, it can be written either as $P_u=\diag(U^{\textrm{tr}},U^{-1}, R_2)$ for some $U\in \GL(a,\RR)$ unipotent and $R_2\in \Sp(n-a-1,\omega)$ (if $\lambda\neq 1$), or $P_u=\diag(U, R_2)$ for some $U\in \Sp(a,\omega)$ unipotent and $R_2\in \Sp(n-a-1,\omega)$ (if $\lambda=1$).

Since $P=P_uP_s$ we have proved the lemma.
\end{proof}

Given an algebraic group $G\In \GL(N,\RR)$, recall that a torus $T\In G$ is a connected, abelian subgroup whose elements are diagonalizable over $\CC$. Every algebraic group admits at least one torus of maximal dimension, called \emph{maximal torus}, which is unique up to conjugation by an element of $G$, and the \emph{rank} of $G$, denoted $\textrm{rk}\, G$, is defined as the dimension of a maximal torus of $G$. We will be mostly concerned with $G=\Sp(N,\omega)$, in which case $\textrm{rk}\, G=N$. The following Lemma is a consequence of well-known results, but we could not find any reference in the literature.

\begin{lem}\label{L:unip-codim}
Given an algebraic group $G\In \GL(N,\RR)$, the set $G_u$ of unipotent elements in $G$ is an affine variety of codimension equal to the rank of $G$.
\end{lem}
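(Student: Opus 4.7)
My plan is to reduce the statement to the classical theorem that the nilpotent cone of a reductive Lie algebra has codimension equal to the rank, via the exponential map and the Levi decomposition of algebraic groups.

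First, I would observe that $G_u$ is Zariski closed in $G$, being cut out by the polynomial relations $(g-I)^N = 0$, and hence is an affine variety. Next, I claim $G_u \subseteq G^0$ so that we may assume $G$ is connected: given unipotent $g \in G$, the nilpotent element $\log g = \sum_{k \geq 1} (-1)^{k-1}(g-I)^k/k$ is a finite polynomial in $g$, and the one-parameter subgroup $t \mapsto \exp(t \log g)$ stays inside the algebraic group $G$, providing a path from $I$ to $g$. Assuming $G$ connected, the truncated polynomial maps $\exp$ and $\log$ become mutually inverse morphisms of affine varieties between the nilpotent cone
\[
\mathcal{N}(\mathfrak{g}) = \{X \in \mathfrak{g} : X^N = 0\}
\]
and $G_u$. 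Hence $\codim_G G_u = \codim_{\mathfrak{g}} \mathcal{N}(\mathfrak{g})$.

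To compute this codimension I would use the Levi decomposition $\mathfrak{g} = \mathfrak{l} \oplus \mathfrak{r}_u$, with $\mathfrak{l}$ a reductive Levi subalgebra and $\mathfrak{r}_u$ the nilpotent radical. The projection $\pi\colon \mathfrak{g} \to \mathfrak{g}/\mathfrak{r}_u \cong \mathfrak{l}$ is compatible with the Jordan decomposition; since every element of $\mathfrak{r}_u$ is nilpotent, an element $X = X_\ell + X_r \in \mathfrak{g}$ is nilpotent if and only if $X_\ell = \pi(X) \in \mathfrak{l}$ is nilpotent. This yields an isomorphism of affine varieties $\mathcal{N}(\mathfrak{g}) \cong \mathcal{N}(\mathfrak{l}) \times \mathfrak{r}_u$, so $\codim_{\mathfrak{g}} \mathcal{N}(\mathfrak{g}) = \codim_{\mathfrak{l}} \mathcal{N}(\mathfrak{l})$. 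Since any maximal torus of $G$ is conjugate to one contained in the Levi subgroup $L$, one has $\operatorname{rk}(L) = \operatorname{rk}(G)$, and the result follows from the classical Chevalley--Kostant theorem that the nilpotent cone of a reductive Lie algebra has codimension equal to its rank (a consequence of Chevalley's restriction theorem identifying the adjoint quotient with $\mathfrak{t}/W$).

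The main obstacle I anticipate is making all the identifications above rigorous over $\RR$ --- in particular, that $\exp$ and $\log$ restrict to genuine isomorphisms of affine varieties (not merely bijections on real points) between $\mathcal{N}(\mathfrak{g})$ and $G_u$, and that the Jordan decomposition descends cleanly through the real Levi decomposition.
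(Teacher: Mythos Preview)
Your approach is correct but genuinely different from the paper's. The paper works entirely on the group level: it fixes a Borel subgroup $B$, uses that every conjugacy class in $G$ meets $B$ (so that $G \times B_u \to G_u$ is surjective), identifies the normalizer of $B_u$ with $B$ via Chevalley's theorem, and concludes from the fiber dimension formula together with $\dim B - \dim B_u = \mathrm{rk}\,G$. Your route instead linearizes via $\exp/\log$ to the nilpotent cone $\mathcal{N}(\mathfrak{g})$, peels off the unipotent radical via the Levi decomposition to reduce to the reductive case, and then invokes the Chevalley--Kostant theorem.

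Each approach has its merits. The paper's argument is self-contained modulo three citations to Borel's textbook and never leaves the group; yours is more conceptual and explains \emph{why} the rank appears (it is the dimension of the adjoint quotient $\mathfrak{t}/W$), but it imports a heavier theorem at the end. Your Levi step --- showing $X$ is nilpotent iff its projection $X_\ell$ is --- is correct: if $X_\ell$ is nilpotent then the semisimple part $X_s$ of $X$ projects to zero, hence lies in the nilpotent ideal $\mathfrak{r}_u$, forcing $X_s = 0$. The concern you raise about working over $\RR$ is legitimate but manageable: the truncated $\exp$ and $\log$ are genuine polynomial inverses between the affine schemes $\mathcal{N}(\mathfrak{g})$ and $G_u$ in characteristic zero, the Levi decomposition exists over $\RR$ by Mostow, and the real nilpotent cone of a reductive $\mathfrak{l}$ has the expected dimension since the regular nilpotent orbit is defined over $\RR$ and has real points.
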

\begin{proof}
The set $G_u$ is invariant under the action of $G$ by conjugation. Fixing  $B$ a Borel (i.e., maximal connected solvable) subgroup of $G$, let $B_u$ denote the subset of unipotent elements in $B$. Every $G$-orbit meets $B$ at least once by \cite[11.10]{Borel}, and therefore the map $G\times B_u\to G_u$ sending $(g,A)$ to $gAg^{-1}$ is surjective. The normalizer $N(B_u)=\{g\in G\mid gB_ug^{-1}\In B_u\}$ coincides with $B$ by a theorem of Chevalley \cite[11.16]{Borel} and therefore $\dim B_u+\dim G-\dim B=\dim G_u$. By \cite[10.6]{Borel} $B_u$ is normal in $B$ and $B/B_u$ is isomorphic to a maximal torus, in particular $\dim B=\dim B_u+\textrm{rk}\,G$. With the equation before, we obtain $\dim G_u=\dim G-\textrm{rk}G$.
\end{proof}

Let $\Sp_1(n-1,\omega)$ denote the space of symplectic matrices whose positive real eigenvalues have geometric multiplicity $1$. The next result shows that the complement of $\Sp_1(n-1,\omega)$ in $\Sp(n-1,\omega)$ has codimension at least 3.

\begin{prop}\label{P:geom-mult}
The set of matrices $P\in \Sp(n-1,\omega)$ with some eigenvalue $\lambda\in (0,1]$ of geometric multiplicity $>1$ has codimension $\geq 3$ in $\Sp(n-1,\omega)$.
\end{prop}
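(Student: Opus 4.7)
The plan is to stratify the ``bad set''
\[
\mathcal{B}=\bigl\{P\in\Sp(n-1,\omega):\lambda\in(0,1]\text{ is an eigenvalue of }P\text{ with }\dim\ker(P-\lambda\,\Id)\geq 2\bigr\}
\]
by the triple $(\lambda,a,\mu)$, where $a$ is the algebraic multiplicity of $\lambda$ and $\mu$ is the Jordan type of the unipotent block $U$ produced by Lemma~\ref{L:Decomposition}. The geometric-multiplicity condition is equivalent to $\dim\ker(U-\Id)\geq 2$, i.e., $\mu$ has at least two parts.

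For fixed $\lambda\in(0,1)$, I would use Lemma~\ref{L:Decomposition}(a) to write $P=\diag(\lambda U^{\mathrm{tr}},(\lambda U)^{-1},R)$ on a symplectic decomposition $\RR^{2(n-1)}=W_\lambda\oplus W_\lambda^\perp$, and parametrize the $(a,\mu)$-stratum by three independent ingredients: the symplectic Grassmannian of $2a$-dimensional symplectic subspaces $W_\lambda$, of dimension $4a(n-1-a)$; the $\Sp(a,\omega)$-conjugation orbit of $\diag(\lambda U^{\mathrm{tr}},(\lambda U)^{-1})$; and the open subset of $\Sp(n-1-a,\omega)$ in which $R$ ranges (those with no $\lambda,\lambda^{-1}$ eigenvalue), of dimension $(n-1-a)(2(n-1-a)+1)$. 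The orbit contribution is computed by a Sylvester-type observation: since the two diagonal blocks $\lambda U^{\mathrm{tr}}$ and $(\lambda U)^{-1}$ have disjoint spectra, any symplectic commutant is forced to be block diagonal, and the symplectic condition then identifies the centralizer with the diagonal copy of $Z_{\GL(a)}(U)$, of dimension $\sum_i(\mu_i^*)^2$. Summing the three dimensions and subtracting from $\dim\Sp(n-1,\omega)=(n-1)(2n-1)$ collapses cleanly to a stratum codimension of exactly $\sum_i(\mu_i^*)^2$. Over partitions $\mu$ of $a$ with at least two parts this minimum is $a+2$, achieved at the subregular partition $\mu=(a-1,1)$; allowing $\lambda$ to vary over the $1$-parameter family $(0,1)$ then drops the codimension by $1$ to $a+1\geq 3$, attained at $a=2$, $U=\Id_2$.

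For $\lambda=1$, I would run the analogous argument using Lemma~\ref{L:Decomposition}(b). Here the stratum decomposes as the Grassmannian of $2a$-dimensional symplectic $W$, the $\Sp(a,\omega)$-orbit of the unipotent $U\in\Sp(a,\omega)$, and the open subset of $R\in\Sp(n-1-a,\omega)$ with no eigenvalue $1$; the same arithmetic collapses the stratum codimension to $\dim Z_{\Sp(a,\omega)}(U)$. Because the subregular nilpotent orbit in any semisimple Lie algebra has codimension $\mathrm{rank}+2$, one has $\dim Z_{\Sp(a,\omega)}(U)\geq a+2$ for every non-regular unipotent $U$, giving codimension $\geq a+2\geq 3$, with the minimum $3$ attained at $a=1$, $U=\Id_2\in\Sp(1,\omega)=\SL(2,\RR)$.

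The main obstacle will be the centralizer reduction in the $\lambda\in(0,1)$ case --- verifying via the spectral disjointness of $\{\lambda\}$ and $\{\lambda^{-1}\}$ that the symplectic commutant of $\diag(\lambda U^{\mathrm{tr}},(\lambda U)^{-1})$ is precisely the diagonal copy of $Z_{\GL(a)}(U)$, and combining this with the classical formula $\dim Z_{\GL(a)}(U)=\sum_i(\mu_i^*)^2$ --- together with identifying the subregular orbit (in $\GL(a)$ for $\lambda\neq 1$, and in $\Sp(a,\omega)$ for $\lambda=1$) as the non-regular unipotent of minimal centralizer dimension. Once these are in place, the rest reduces to the dimension bookkeeping outlined above and to checking that across all strata the minimum codimension is $3$.
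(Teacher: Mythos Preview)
Your proposal is correct and is in the same spirit as the paper's proof---both stratify the bad set by the algebraic data attached to $\lambda$ and bound each stratum via an orbit argument---but the execution differs in a meaningful way.

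The paper works with explicit slices: for $\lambda\neq 1$ it fixes a transversal $\Sigma$ of matrices $\diag(\lambda U_1^{\mathrm{tr}},(\lambda U_1)^{-1},\lambda U_2^{\mathrm{tr}},(\lambda U_2)^{-1},R)$ with two prescribed Jordan blocks, writes down by hand a family $\mathcal{A}$ of matrices in the normalizer $N(\Sigma)$ of dimension $3n_1+n_2\geq 4$, and concludes via $\dim\mathcal{M}\leq\dim\Sp(n-1,\omega)-\dim\mathcal{A}$. For $\lambda=1$ it combines a general fact (the unipotent variety of an algebraic group has codimension equal to the rank) with ad hoc computations for the small cases $n_0=1,2$. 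This is self-contained but somewhat crude: only a lower bound on $\dim N(\Sigma)$ is produced, and the small cases must be treated separately.

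Your route replaces the slice/normalizer bookkeeping by the cleaner identity $(\text{codimension of stratum})=\dim Z(P|_{W_\lambda})$, where the centralizer is taken in $\Sp(a,\omega)$. For $\lambda\neq 1$ you reduce this centralizer, via Sylvester, to $Z_{\GL(a)}(U)$ and invoke the classical formula $\dim Z_{\GL(a)}(U)=\sum_j(\mu_j^*)^2$; for $\lambda=1$ you invoke the fact that the subregular orbit in a semisimple group has centralizer dimension $\mathrm{rank}+2$. This yields the exact codimension of each stratum (not just a bound), handles all block sizes uniformly without the small-case analysis, and makes transparent where the minimum $3$ is attained (at $a=2$, $U=\Id_2$ for $\lambda\in(0,1)$, and at $a=1$, $U=\Id_2\in\SL(2,\RR)$ for $\lambda=1$). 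The cost is that you import two external facts---the $\GL$ centralizer formula and the subregular codimension---whereas the paper's argument stays closer to first principles.
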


\begin{proof}
Given $\lambda$ in $(0,1]$ let $\mathcal{M}_{\lambda}$ denote the space of matrices in $\Sp(n-1,\omega)$ with eigenvalue $\lambda$ of geometric multiplicity at least 2. This set can be also described as
\[
\mathcal{M}_{\lambda}=\{X\in \Sp(n-1,\omega)\mid \textrm{rk}(X-\lambda I)\leq 2(n-2)\}
\]
from which it follows that $\mathcal{M}_\lambda$ is an algebraic variety, and we can talk about its dimension. To prove the lemma, it is enough to prove that the codimension of $\mathcal{M}_{\lambda}$ is $\geq 4$ for $\lambda\neq 1$, and $\geq 3$ if $\lambda=1$.

We also define $\mathcal{M}_\lambda(n_1,n_2)$ to be the subspace of matrices $P$ in $\mathcal{M}_\lambda$ such that the generalised eigenspace with eigenvalue $\lambda$ can be written as a sum of two $P$-invariant subspaces  of dimension $n_1,n_2$. The set $\mathcal{M}_\lambda$ consists of a finite union of the $\mathcal{M}_\lambda(n_1,n_2)$ and it suffices to show that each of them has the required codimension.

Suppose first that $\lambda\neq 1$. Fixing one $\mathcal{M}=\mathcal{M}_\lambda(n_1,n_2)$, let $\Sigma\In \mathcal{M}_{\lambda}$ denote the subset of matrices $P$ that, in some fixed basis, can be written as $P=\diag(P_1,R)$, with $P_1, R$ both symplectic, and $P_1$ decomposing further as $\diag(\lambda U_1^{\textrm{tr}},  (\lambda U_1)^{-1},\lambda U_2^{\textrm{tr}}, (\lambda U_2)^{-1})$ where $U_1\in \GL(n_1,\RR)$, $U_2\in \GL(n_2,\RR)$ have the form
\[
\left(\begin{array}{cccc}1 &  &  &  \\ 1& 1 &  &  \\ & \ddots & \ddots &  \\ &  & 1 & 1\end{array}\right)
\]

The set $\mathcal{M}$ is preserved under the action of $\Sp(n-1,\omega)$ on itself by conjugation and, by Lemma \ref{L:Decomposition}, every orbit meets $\Sigma$ in at least one point. Therefore, the map $\Sigma\times \Sp(n-1,\omega)\to \mathcal{M}$ is surjective and, letting $N(\Sigma)=\{A\in \Sp(n-1,\omega)\mid A\Sigma A^{-1}\In \Sigma\}$ denote the normalizer of $\Sigma$, we have
\begin{equation}\label{E:formula-M}
\dim \mathcal{M}=\dim \Sp(n-1,\omega)+ \dim \Sigma -\dim N(\Sigma).
\end{equation}
Clearly a matrix $P=\diag(P_1,R)$ in $\Sigma$ is uniquely determined by $R\in \Sp(n'-1,\omega)$, $n'=n-n_1-n_2$, and therefore $\dim \Sigma=\dim\Sp(n'-1,\omega)$.

We now compute $N(\Sigma)$. Suppose that $n_1\leq n_2$, and let $\mathcal{A}\In \GL(n_1+n_2,\RR)$ be the set of matrices such that
\[
A^{\textrm{tr}}=\left(\begin{array}{ccc|cccc}
a_1 	& \cdots	& a_{n_1} & 0		& b_1 	& \cdots 	& b_{n_1} \\ 
	& \ddots	& \vdots 	& \vdots 	&  		& \ddots 	& \vdots \\ 
	&  & a_1	& 0 		& 		&  		& b_1 \\ \hline
c_1 	& \cdots 	& c_{n_1} & d_1	& d_2 	& \cdots 	&	d_{n_2} \\
	& \ddots 	& \vdots 	&  		& d_1 	& \ddots 	& \vdots \\
	&  		& c_1	&  		& 		& \ddots 	& d_2 \\
0 	& \cdots 	& 0 		&  		&  		&  		& d_1\end{array}\right)
\]
Clearly $\dim \mathcal{A}=3n_1+n_2\geq 4$. For any matrix $A\in \mathcal{A}$ and $B\in \Sp(n'-1,\omega)$, the matrix $\diag(A^{\textrm{tr}},A^{-1},B)$ lies in $N(\Sigma)$.  In particular, $\dim N(\Sigma)\geq \dim \mathcal{A}+\Sp(n'-1,\omega)$ and therefore $\dim \mathcal{M}\leq \dim \Sp(n-1,\omega)-\dim\mathcal{A}\leq \Sp(n-1,\omega)-4$.

If $\lambda=1$ then any $P\in \mathcal{M}_{\lambda}$ can be written, in a suitable symplectic basis, as $\diag(U,R)$ where $U\in \Sp(n_0,\omega)$, for some $n_0$, is unipotent with at least two linearly independent eigenvectors, and $R\in \Sp(n-n_0-1,\omega)$. We now define $\Sigma$ to be the set of matrices that, under the same fixed basis, can be written as $\diag(U',R')$ for some $R'\in \Sp(n'-1,\omega)$ and some unipotent matrix $U'\in \Sp(n_0,\omega)$. If we let $\Sp(n_0,\omega)_u$ denote the set of unipotent matrices in $\Sp(n_0,\omega)$, we have
\[
\dim \Sigma= \dim \Sp(n_0,\omega)_u+\dim \Sp(n-n_0-1,\omega).
\]
where $\Sp(n_0, \omega)_u$ denote the unipotent matrices in $\Sp(n_0, \omega)$. The normalizer $N(\Sigma)$ contains the matrices of the form $\diag(P_1,R')$ with $P_1\in \Sp(n_0,\omega)$ and $R'\in \Sp(n-n_0-1,\omega)$, thus
\[
\dim N(\Sigma)\geq \dim \Sp(n_0,\omega)+\dim \Sp(n-n_0-1,\omega).
\]

Once again $\Sp(n-1,\omega)$ acts on $\mathcal{M}_\lambda$ and by Lemma \ref{L:Decomposition} every orbit meets $\Sigma$. In particular, $\mathcal{M}_\lambda$ is contained in the space spanned by the orbits of $\Sigma$, and $\dim \mathcal{M}_\lambda\leq\dim \Sp(n-1,\omega)-(\dim N(\Sigma)-\dim \Sigma)$. By the computation above and Lemma \ref{L:unip-codim}, we have
\begin{align*}
\dim \mathcal{M}_\lambda&\leq \dim \Sp(n-1,\omega)- \mathrm{rk}\,\Sp(n_0,\omega)\\ &= \dim\Sp(n-1,\omega)-n_0.
\end{align*}
The codimension of $\mathcal{M}_\lambda$ is then $\geq 3$, unless possibly when $n_0=1$ or $2$.

If $n_0=1$, then every matrix in $\mathcal{M}_{\lambda}$ can be written, under some basis, as
\begin{equation}\label{E:n=1}
\diag(\Id_2,R),\qquad R\in \Sp(n-2,\omega)
\end{equation}
Fixing a basis and letting $\Sigma$ denote the space of matrices that, in the fixed basis, can be written as in \eqref{E:n=1}, we have that  $\dim\Sigma=\dim\Sp(n-2,\omega)$ and the normalizer $N(\Sigma)$ contains $\Sp(1,\omega)\times\Sp(n-2,\omega)$. Therefore, $\dim N(\Sigma)-\dim \Sigma\geq 3$. Using Equation \eqref{E:formula-M}, we obtain $\dim \mathcal{M}_\lambda\leq \dim\Sp(n-1,\omega)-3$.

If $n_0=2$, then every matrix in $\mathcal{M}_{\lambda}$ can be written, under some basis, as
\begin{equation}\label{E:n=2}
U= \diag(U_1,U_2)\qquad U_i=\left(\begin{array}{cc}1 &  \\ \sigma_i & 1\end{array}\right),\, \sigma_i\in \{-1,0,1\}.
\end{equation}
Fixing a basis and letting $\Sigma$ denote the space of matrices that, in the fixed basis, can be written as in \eqref{E:n=2}, we have $\dim\Sigma=\dim \Sp(n-3,\omega)$. If for example $\sigma_1=\sigma_2=1$ then $N(\Sigma)$ contains all the matrices of the form $\diag(P_1,R)$ where $R\in \Sp(n-3,\omega)$ is any matrix, and $P_1\in \Sp(2,\omega)$ has the form

\[
P_2=\left(\begin{array}{cc|cc}
\cos\theta 	& a	& -\sin\theta 	& b \\ 
	& \cos\theta	& 		& -\sin\theta \\ \hline
\sin\theta 	& c 	& \cos\theta	& d  \\
	&  \sin\theta	&  		& \cos\theta \end{array}\right),
\]
where $a,b,c,d$ satisfy the linear equation $(a+b)\cos\theta=(c-d)\sin\theta$. 
Therefore $\dim N(\Sigma)\geq \dim\Sp(n-3,\omega)+4$ and, by Equation \eqref{E:formula-M}, 
we obtain $\dim\mathcal{M}_\lambda\leq \dim\Sp(n-1,\omega)-4$. The same computations can be checked for 
the other values of $\sigma_1$ and $\sigma_2$.
\end{proof}

Let $\mathcal{G},\mathcal{G}_1$ denote the subspaces of $\Sp(n-1,\omega)\times\RR_+$ given by
\begin{align*}
\mathcal{G}&=\left\{(P,\lambda)\in \Sp(n-1,\omega)\times\RR_+\mid \dim \ker(P-\lambda\, \Id)\leq 1 \right\}\\
\mathcal{G}_1&=\left\{(P,\lambda)\in \Sp(n-1,\omega)\times\RR_+\mid \dim \ker(P-\lambda\, \Id)= 1 \right\}
\end{align*}
Clearly $\mathcal{G}$ is open and dense in $\Sp(n-1,\omega)\times\RR_+$, $\mathcal{G}_1\In \mathcal{G}_0$ and, by construction, we have $\Sp_1(n-1,\omega)\times \RR_+\In \mathcal{G}$.
\begin{prop}\label{P:map-surj}
The map
\begin{align*}
\chi:\mathcal{G}&\lra \RR\\
(P,\mu)&\lmt \det(P-\mu \,\Id)
\end{align*}
is a submersion in a neighbourhood of $\mathcal{G}_1$.
\end{prop}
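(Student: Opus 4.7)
The plan is to compute the differential of $\chi$ at an arbitrary point $(P,\mu)\in\mathcal{G}_1$ and show that it is nonzero; since being a submersion is an open condition, this will automatically extend to a neighborhood of $\mathcal{G}_1$ in $\mathcal{G}$. Write $A=P-\mu\,\Id$. The hypothesis $\dim\ker A=1$ says that $A$ has corank exactly one, so its classical adjugate $\textrm{adj}(A)$ is a nonzero matrix of rank one. From the identities $A\cdot\textrm{adj}(A)=\det(A)\Id=0=\textrm{adj}(A)\cdot A$ one concludes that $\textrm{adj}(A)=v\,w^{\textrm{tr}}$ for some nonzero $v\in\ker A$ and $w\in\ker A^{\textrm{tr}}$; in particular $P^{\textrm{tr}}w=\mu w$.

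Applying the Jacobi formula $d\det|_X(Y)=\tr(\textrm{adj}(X)\,Y)$ to $X=A$ yields
\[
d\chi_{(P,\mu)}(Y,s)=\tr(\textrm{adj}(A)\,Y)-s\,\tr(\textrm{adj}(A))=w^{\textrm{tr}}Y v-s\,(w^{\textrm{tr}}v).
\]
The task is thus to exhibit a tangent vector $Y\in T_P\Sp(n-1,\omega)$ with $w^{\textrm{tr}}Y v\neq0$. It is worth flagging that on $\mathcal{G}_1$ the algebraic multiplicity of $\mu$ may exceed its geometric multiplicity of $1$, in which case $\partial_\mu\chi$ actually vanishes; hence varying $\mu$ alone is insufficient, and one genuinely has to move $P$ inside $\Sp(n-1,\omega)$.

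Let $J$ denote the matrix of $\omega$ in the standard basis of $\RR^{2(n-1)}$. I would use the standard description $T_P\Sp(n-1,\omega)=P\cdot\sp(n-1,\omega)$ together with $\sp(n-1,\omega)=J^{-1}\cdot\Sym$, so that every $Y$ in the tangent space has the form $Y=P J^{-1}S$ for a symmetric matrix $S$. Using $w^{\textrm{tr}}P=\mu w^{\textrm{tr}}$,
\[
w^{\textrm{tr}}Y v=\mu\,u^{\textrm{tr}}S\,v,\qquad u:=(J^{-1})^{\textrm{tr}}w.
\]
Since $J$ is invertible and $w\neq0$ the vector $u$ is nonzero, and $v\neq0$ by construction; the linear map $\Sym\to\RR$, $S\mapsto u^{\textrm{tr}}S v$ is then nonzero, because $u^{\textrm{tr}}S v=\tfrac12\tr\bigl(S\cdot(u v^{\textrm{tr}}+v u^{\textrm{tr}})\bigr)$ and the symmetric matrix $u v^{\textrm{tr}}+v u^{\textrm{tr}}$ vanishes only when $u=0$ or $v=0$ (a quick check by evaluating it at $v$). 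Combined with $\mu>0$, this produces a symmetric $S$, hence a tangent vector $(Y,0)$, with $d\chi_{(P,\mu)}(Y,0)\neq0$.

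The calculation is essentially one shot of linear algebra once the tangent-space parametrization is in hand; the only conceptual point is the distinction between geometric and algebraic multiplicity, which forces one to exploit the $\Sp$-directions rather than only the $\mu$-direction, and to verify that the rank-one pairing with $\textrm{adj}(A)$ is in fact detected by the symmetric-matrix slice of $\sp(n-1,\omega)$.
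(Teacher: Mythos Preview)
Your proof is correct and considerably more streamlined than the paper's. The paper proceeds by first invoking Lemma~\ref{L:Decomposition} to put $P$ into an explicit block form depending on whether $\mu\neq 1$ or $\mu=1$, then writes down concrete elements of $\sp(n-1,\omega)$ in each case and computes $d\chi$ by hand. Your route via the adjugate and Jacobi's formula bypasses all case analysis: the rank-one structure $\textrm{adj}(A)=v\,w^{\textrm{tr}}$ together with the parametrization $T_P\Sp=P\,J^{-1}\Sym$ reduces the question to the nondegeneracy of the pairing $S\mapsto u^{\textrm{tr}}Sv$ on symmetric matrices, which you dispatch cleanly.

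One trade-off worth noting: the paper's explicit construction actually yields a tangent vector $v_{(S,\lambda)}$ with $d_{(S,\lambda)}\chi(v_{(S,\lambda)})>0$, not merely $\neq 0$, and moreover this vector is built so that $d_{(S,\mu)}\chi(v_{(S,\lambda)})=0$ for the other real eigenvalues $\mu\neq\lambda,\lambda^{-1}$ of $S$. These refinements are exactly what is assembled in the subsequent Proposition~\ref{P:vector-field} into a global vector field on $\Sp_1(n-1,\omega)$ with $d\chi(V)>0$ at every real eigenvalue simultaneously. Your argument establishes Proposition~\ref{P:map-surj} as stated, but if you later need the sign control or the eigenvalue-decoupling for Proposition~\ref{P:vector-field}, you would have to revisit the construction; the adjugate approach does not obviously hand you a coherent choice of sign or the vanishing at the other eigenvalues.
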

\begin{proof}
We are going to prove a stronger statement. In fact, we prove that for any $(S,\lambda)\in \mathcal{G}_0$ 
we can find a vector $v_{\scriptscriptstyle(S,\lambda)}\in T_S\Sp(n-1,\omega)$ such that
$d_{\scriptscriptstyle(S,\lambda)}\chi(v_{\scriptscriptstyle(S,\lambda)})> 0$.

Let $a$ denote the algebraic multiplicity of $\lambda$ in $S$. By Lemma \ref{L:Decomposition}, there is a
symplectic basis such that $S$ can be written as $S=\diag(S_1,S_2)$ where $S_1\in \Sp(a,\omega)$ only contains the eigenvalues $\lambda, \lambda^{-1}$ and $S_2\in \Sp(n-1-a,\omega)$ has eigenvalues different from $\lambda$ and $\lambda^{-1}$.

If $\lambda\neq1$ then by Lemma \ref{L:Decomposition} we can write 
$S_1=\diag(\lambda U^{\textrm{tr}}, (\lambda U)^{-1})$, where
\[
\lambda U^{\textrm{tr}}=\left(\begin{array}{cccc}\lambda & 1 &   &   \\  & \lambda & \ddots &   \\  &   & 
\ddots & 1 \\  &   &   & \lambda\end{array}\right).
\]
If $\lambda>1$ let $v=\diag(-E_{a,1},E_{1,a})\in \mathfrak{sp}(a,\omega)$ otherwise let
$v=(-1)^{a}\diag(-E_{a,1},E_{1,a})$. In either case, let $v'=\diag(v,0)\in \sp(n-1,\omega)$ and 
$v_{\scriptscriptstyle(S,\lambda)}={L_S}_*v'\in T_S\Sp(n-1,\omega)$, one can compute
\[
d_{\scriptscriptstyle(S,\lambda)}\chi(v_{\scriptscriptstyle(S,\lambda)})=\lambda|\lambda-\lambda^{-1}|^a> 0.
\]
If $\lambda=1$, then $S_1$ can be written in the following block form
\[
S_1=\left(\begin{array}{c  c} U^{-1} &    \\  U^{\textrm{tr}}T & U^{\textrm{tr}}\end{array}\right)
\]
where $T$ is a symmetric matrix, and
\[
U^{\textrm{tr}}=\left(\begin{array}{ccc}1 &  \cdots & 1 \\ &  \ddots & \vdots  \\
&  & 1\end{array}\right),\qquad U^{-1}=\left(\begin{array}{ccc}1  &  &  \\-1 & \ddots &    \\
& -1 & 1\end{array}\right)
\]
In order for $S_1$ to have geometric multiplicity 1, it must be 
$(UT)_{a,a}=c\neq 0$. Without loss of generality we can suppose that the sign of $c$ is $(-1)^{a-1}$. Define
\[
v=\left(\begin{array}{c | c} 0 & E_{1,1}   \\ \hline 0 & 0 \end{array}\right)\in \mathfrak{sp}(a,\omega).
\]
Letting $v'=\diag(v,0)\in \sp(n-1,\omega)$ and $v_{\scriptscriptstyle(S,\lambda)}={L_S}_*v'\in T_S\Sp(n-1,\omega)$
we can easily compute that
\[
d_{\scriptscriptstyle(S,\lambda)}\chi(v_{\scriptscriptstyle(S,\lambda)})= (-1)^{a-1}c> 0.
\]
\end{proof}

From Proposition \ref{P:map-surj} we obtain the following stronger, more global result.
\begin{prop}\label{P:vector-field}
There exists a vector field $V$ on $\Sp_1(n-1,\omega)$ such that for every $S\in \Sp_1(n-1,\omega)$ and 
every real eigenvalue $\lambda$ of $S$, $d_{\scriptscriptstyle(S,\lambda)}\chi(V)>0$.
\end{prop}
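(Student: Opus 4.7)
The plan is to globalize Proposition~\ref{P:map-surj}, which only produces, at each point, a tangent vector making $d\chi$ positive at one chosen real eigenvalue, into a single smooth vector field that works simultaneously for every real eigenvalue of every matrix. The key observation is a block-orthogonality property of the vectors constructed in the proof of Proposition~\ref{P:map-surj}: that proof produces $v_{(S,\lambda)} = {L_S}_* v'$ with $v' \in \sp(n-1,\omega)$ supported, in the refined Jordan decomposition of Lemma~\ref{L:Decomposition}, on the block corresponding to the pair $\{\lambda,\lambda^{-1}\}$. If $\mu \in \RR_+$ is any real eigenvalue of $S$ with $\mu \notin \{\lambda,\lambda^{-1}\}$, then $\exp(tv')$ acts as the identity on the block containing $\mu$, so $\det(S\exp(tv') - \mu I)$ factors as a product of block determinants one of which (the $\mu$-block) vanishes identically in $t$. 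Consequently $d_{(S,\mu)}\chi({L_S}_* v') = 0$, i.e.\ contributions associated to distinct real eigenvalues do not interfere with one another.

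Using this, I would proceed pointwise as follows. Fix $S_0 \in \Sp_1(n-1,\omega)$ with distinct real positive eigenvalues $\lambda_1,\dots,\lambda_k$ (one representative per pair $\{\lambda_i,\lambda_i^{-1}\}$). For each $i$, apply Proposition~\ref{P:map-surj} to obtain $v'_i \in \sp(n-1,\omega)$ supported on the $\lambda_i$-block with $d_{(S_0,\lambda_i)}\chi({L_{S_0}}_*v'_i) > 0$, and set $v' := \sum_i v'_i$ and $V^{S_0}(S) := {L_S}_* v'$, the corresponding left-invariant vector field on $\Sp(n-1,\omega)$. By the orthogonality above, $d_{(S_0,\lambda_j)}\chi(V^{S_0}(S_0)) = d_{(S_0,\lambda_j)}\chi({L_{S_0}}_* v'_j) > 0$ for every $j$ simultaneously. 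The function $f(S,\mu) := d_{(S,\mu)}\chi(V^{S_0}(S))$ is continuous; moreover, since strictly complex conjugate eigenvalues remain strictly complex under small perturbation, every real positive eigenvalue of an $S$ in a small enough neighborhood of $S_0$ must lie close to one of the $\lambda_j$. Picking disjoint intervals $I_j \ni \lambda_j$ and neighborhoods $U_j$ of $S_0$ with $f > 0$ on $U_j \times I_j$, and then shrinking $U_{S_0} := \bigcap_j U_j$ so that every real eigenvalue of every $S \in U_{S_0}$ lies in $\bigsqcup_j I_j$, I obtain an open neighborhood $U_{S_0}$ of $S_0$ such that $d_{(S,\mu)}\chi(V^{S_0}(S)) > 0$ for every $S \in U_{S_0}$ and every real eigenvalue $\mu \in \RR_+$ of $S$.

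To globalize, I would cover $\Sp_1(n-1,\omega)$ by the sets $\{U_{S_0}\}_{S_0}$ together with the open set $U_\infty := \{S : S \text{ has no real eigenvalue in } \RR_+\}$, to which we assign the zero vector field. Choose a locally finite smooth partition of unity $\{\psi_\alpha\}$ subordinate to this cover and define $V := \sum_\alpha \psi_\alpha V^{S_\alpha}$. For any $S \in \Sp_1(n-1,\omega)$ and any real eigenvalue $\mu \in \RR_+$ of $S$, one has $\psi_\infty(S) = 0$, while every $\alpha$ with $\psi_\alpha(S) > 0$ satisfies $S \in U_{S_\alpha}$ and therefore contributes $\psi_\alpha(S)\, d_{(S,\mu)}\chi(V^{S_\alpha}(S)) > 0$. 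Since at least one such $\alpha$ exists, $d_{(S,\mu)}\chi(V(S)) > 0$, as required.

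The principal difficulty is the need to serve \emph{all} real eigenvalues of $S$ by a single tangent vector $V(S)$, whereas Proposition~\ref{P:map-surj} only builds vectors tailored to individual eigenvalues; the block-support structure of those vectors is precisely what lets them be summed without destroying positivity at any one of them. A secondary technical point is to rule out the sudden appearance of unaccounted-for real eigenvalues in $U_{S_0}$, which is handled by noting that strictly complex conjugate pairs cannot migrate to the real axis under small perturbation, so new real eigenvalues can only arise by splitting a higher-multiplicity real eigenvalue of $S_0$ itself — a situation that is already covered by the corresponding $\lambda_j$-contribution.
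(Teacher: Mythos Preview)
Your approach is essentially the same as the paper's: sum the block-supported vectors from Proposition~\ref{P:map-surj} to obtain a single tangent vector at each $S_0$ that works for all real eigenvalues simultaneously, extend to a neighbourhood by continuity, and patch with a partition of unity.

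There is one oversight. You choose one representative $\lambda_j$ from each pair $\{\lambda_j,\lambda_j^{-1}\}$ and verify positivity of $d\chi$ only at the representatives, but the proposition demands positivity at \emph{every} positive real eigenvalue, including $\lambda_j^{-1}$; correspondingly your claim that ``every real positive eigenvalue of an $S$ in a small enough neighbourhood of $S_0$ must lie close to one of the $\lambda_j$'' is false as stated, since it could lie close to $\lambda_j^{-1}$. The paper fills this by an explicit computation showing $d_{(S,\lambda^{-1})}\chi(v_{(S,\lambda)})>0$. Alternatively, you can observe that for symplectic $Q$ one has $\chi(Q,t)=t^{2(n-1)}\chi(Q,t^{-1})$, so the $Q$-directional differentials $d_{(S,\lambda)}\chi$ and $d_{(S,\lambda^{-1})}\chi$ differ by the positive factor $\lambda^{2(n-1)}$, and positivity at $\lambda_j$ automatically yields positivity at $\lambda_j^{-1}$. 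With this addition your argument is complete and matches the paper's.
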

\begin{proof}
Given $S\in \Sp_1(n-1,\omega)$ and $\lambda\in (0,1]$ a real eigenvalue of $S$, let $v_{\scriptscriptstyle(S,\lambda)}\in T_S\Sp_1(n-1,\omega)$ be the vector constructed in the previous proposition, so that $d_{\scriptscriptstyle(S,\lambda)}\chi(v_{\scriptscriptstyle(S,\lambda)})>0$. It can be easily checked that, at the point $(S,\lambda^{-1})$,  the differential of $\chi$ is
\[
d_{\scriptscriptstyle(S,\lambda^{-1})}\chi(v_{\scriptscriptstyle(S,\lambda)})=-\lambda^{1-2a}(\lambda-\lambda^{-1})^a> 0
\]
and moreover for any other eigenvalue $\mu$ of $S$ different from $\lambda$ and $\lambda^{-1}$, one has $d_{\scriptscriptstyle(S,\mu)}\chi(v_{\scriptscriptstyle(S,\lambda)})=0$. In particular, letting $v_S=\sum_{\lambda} v_{\scriptscriptstyle(S,\lambda)}$, where the sum is taken over all the real eigenvalues of $S$ in $(0,1]$, the $v_S$ satisfies
\[
d_{\scriptscriptstyle(S,\lambda)}\chi(v_S)>0
\]
for every real eigenvalue $\lambda$ of $S$. By continuity, we can find a neighbourhood $U_S$ of $S$ and an extension $V_S$ of $v_S$ such that for every $S'\in U_S$ and $\lambda'$ real eigenvalue of $S'$, we have $d_{\scriptscriptstyle(S',\lambda')}\chi(V_S)>0$.

The open sets $\{U_S\}_{S\in \Sp_1(n-1,\omega)}$ form an open cover of $\Sp_1(n-1,\omega)$. Choosing a countable subcover $\{U_{S_i}\}_i$ with a subordinate partition of unity $\{\lambda_i\}_i$, the vector field
\[
V=\sum_i\lambda_i V_{S_i}
\]
has the required properties.
\end{proof}

Proposition \ref{P:map-surj} implies that $\mathcal{G}_1$ is a smooth hypersurface in $\mathcal{G}$. Consider now the projection $\pi:\mathcal{G}_0\to \RR$, sending $(Q,\lambda)$ to $\lambda$, and let $\mathcal{G}_0=\pi^{-1}(1)$.
\begin{lem}\label{L:G0inG1}
The map $\pi:\mathcal{G}_1\to \RR$ is a submersion around $\mathcal{G}_0$.
\end{lem}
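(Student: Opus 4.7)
The plan is to reduce the submersion condition at each point of $\mathcal{G}_0$ to a short linear-algebra check, using the identification $T_{(S,1)}\mathcal{G}_1 = \ker d\chi_{(S,1)}$ coming from Proposition \ref{P:map-surj}. Writing tangent vectors to $\mathcal{G} \In \Sp(n-1,\omega) \times \RR_+$ as pairs $(v,s)$ with $v \in T_S\Sp(n-1,\omega)$ and $s \in T_\lambda \RR$, the differential $d\pi$ simply picks out the $s$-coordinate. So I need to exhibit, at each $(S,1)\in \mathcal{G}_0$, an element of $\ker d\chi_{(S,1)}$ whose $\lambda$-component is nonzero.

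First I would invoke the stronger form of Proposition \ref{P:map-surj}: its proof actually produces a vector $w \in T_S\Sp(n-1,\omega)$ (namely the left translate of an explicit $v' \in \sp(n-1,\omega)$) such that $d\chi_{(S,1)}(w,0) > 0$. Next, set $\alpha := d\chi_{(S,1)}(0,1)$ and split into two cases. If $\alpha = 0$, then $(0,1)$ itself lies in $\ker d\chi_{(S,1)} = T_{(S,1)}\mathcal{G}_1$, and $d\pi_{(S,1)}(0,1) = 1 \neq 0$. If instead $\alpha \neq 0$, then by linearity of $d\chi$ the pair
\[
\bigl(w,\; -d\chi_{(S,1)}(w,0)/\alpha\bigr)
\]
lies in $\ker d\chi_{(S,1)}$, and its second coordinate $-d\chi_{(S,1)}(w,0)/\alpha$ is nonzero. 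In either case $d\pi_{(S,1)}$ is surjective, so $\pi$ is a submersion at $(S,1)$. Since submersivity is an open condition, the conclusion persists on an open neighborhood of $\mathcal{G}_0$ in $\mathcal{G}_1$.

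The main observation is that the work already done in Proposition \ref{P:map-surj} is stronger than merely saying $\chi$ is a submersion: it produces a direction of nonvanishing derivative lying purely in the $\Sp$-factor, which is exactly what is needed here to complement the $\lambda$-direction. Consequently there is no genuine obstacle, only the brief case split on whether $d\chi$ in the $\lambda$-direction vanishes.
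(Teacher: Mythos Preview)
Your argument is correct, but it takes a different route from the paper's. The paper's proof is entirely constructive: given $(Q,1)\in\mathcal{G}_0$, it writes $Q=\diag(P,R)$ with $P$ the unipotent block on the generalized $1$-eigenspace, and then exhibits an explicit one-parameter family $Q(t)=\diag(P(t),R)$ obtained by rescaling $P$ so that $P(t)$ has eigenvalue $e^{-t}$ with geometric multiplicity $1$. The curve $(Q(t),e^{-t})$ then lies in $\mathcal{G}_1$ for small $t$, and its velocity at $t=0$ is a tangent vector to $\mathcal{G}_1$ on which $d\pi$ is nonzero.

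Your approach is instead a linear-algebraic reduction: since $\mathcal{G}_1=\chi^{-1}(0)$ is cut out submersively, $T_{(S,1)}\mathcal{G}_1=\ker d\chi_{(S,1)}$, and the stronger content of the proof of Proposition~\ref{P:map-surj} supplies a vector $(w,0)$ lying purely in the $\Sp$-direction with $d\chi(w,0)\neq 0$; this forces $\ker d\chi$ not to be contained in $\ker d\pi=T_S\Sp(n-1,\omega)\times\{0\}$, hence $d\pi|_{\mathcal{G}_1}$ is surjective. This is shorter and avoids redoing any block-matrix computation, but it relies on the internal statement of the proof of Proposition~\ref{P:map-surj} (the existence of $w$ in the $\Sp$-factor) rather than just the proposition as stated. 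The paper's version, by contrast, is self-contained and gives an explicit tangent direction, at the price of one more appeal to the normal-form decomposition of Lemma~\ref{L:Decomposition}.
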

\begin{proof}
It is enough to find, for every point $(Q,1)\in \mathcal{G}_1$, a vector $v\in T_{\scriptscriptstyle(Q,1)}\mathcal{G}_1$ such that $d_{\scriptscriptstyle(Q,1)}\pi(v)\neq 0$. Fixing $(Q,1)$, we know in particular that $1$ is an eigenvalue of $Q$ and therefore $Q$ can be written, in some basis, as $Q=\diag(P,R)$ where $P\in \Sp(a,\omega)$, $R\in \Sp(n-1-a,\omega)$ and moreover
\[
P=\left(\begin{array}{cc}U^{-1} & 0 \\U^{\textrm{tr}}T & U^{\textrm{tr}}\end{array}\right)
\]
with $T$ symmetric and $U$ unipotent such that
\[
U^{\textrm{tr}}=\left(\begin{array}{ccc}1 & \cdots & 1 \\ & \ddots & \vdots \\ &  & 1\end{array}\right).
\]
For some $t$ small, let
\[
P(t)=\left(\begin{array}{cc}e^{-t}U^{-1} & 0 \\e^tU^{\textrm{tr}}T & e^tU^{\textrm{tr}}\end{array}\right), \quad Q(t)=\diag(P(t),R).
\]
Then the path $(Q(t),e^{-t})$ lies in $\mathcal{G}_0$ for small $t$, and $\pi(Q(t),e^{-t})=e^{-t}$. In particular, letting
\[
v={d\over dt}|_{t=0}(Q(t),e^{-t})
\]
we obtain $d_{\scriptscriptstyle(Q,1)}\pi(v)\neq 0$ thus proving the Lemma.
\end{proof}

The following Corollary is straightforward
\begin{cor}
The subset $\pi^{-1}\big((0,1]\big)\In \mathcal{G}_1$ is a smooth manifold, with boundary $\mathcal{G}_0$.
\end{cor}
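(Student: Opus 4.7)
The plan is to apply the regular value theorem for manifolds with boundary to the restricted map $\pi\colon \mathcal{G}_1 \to \RR$. All the real work has already been done in Proposition~\ref{P:map-surj} and Lemma~\ref{L:G0inG1}; what remains is a short packaging argument.

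First I would record that $\mathcal{G}_1$ itself is a smooth hypersurface. By Proposition~\ref{P:map-surj}, the function $\chi\colon \mathcal{G} \to \RR$ is a submersion on a neighbourhood of $\mathcal{G}_1 = \chi^{-1}(0)$, so the usual regular value theorem furnishes $\mathcal{G}_1$ with the structure of a smooth codimension-one submanifold of $\mathcal{G} \In \Sp(n-1,\omega)\times \RR_+$.

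Next I would observe that, since $\pi$ takes values in $\RR_+$ on $\mathcal{G}_1$, we have $\pi^{-1}\bigl((0,1]\bigr) = \pi^{-1}\bigl((-\infty,1]\bigr)$. Lemma~\ref{L:G0inG1} asserts that $\pi\colon \mathcal{G}_1 \to \RR$ is a submersion on a neighbourhood of $\mathcal{G}_0 = \pi^{-1}(1)$, which is exactly the statement that $1$ is a regular value of $\pi$. The regular value theorem for manifolds with boundary then yields that $\pi^{-1}\bigl((-\infty,1]\bigr)$ is a smooth manifold with boundary $\pi^{-1}(1) = \mathcal{G}_0$, with interior given by the open submanifold $\pi^{-1}\bigl((0,1)\bigr)$ of $\mathcal{G}_1$.

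There is essentially no obstacle here: the technical work is already absorbed into the construction of explicit tangent vectors in the proofs of Proposition~\ref{P:map-surj} and Lemma~\ref{L:G0inG1}, and the corollary amounts to combining those two submersion statements via the standard regular value theorem.
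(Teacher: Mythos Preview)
Your proposal is correct and matches the paper's approach: the paper simply declares the corollary ``straightforward'' without giving an explicit proof, and your argument---combining the fact that $\mathcal{G}_1$ is a smooth hypersurface (from Proposition~\ref{P:map-surj}) with the regularity of $1$ for $\pi$ (from Lemma~\ref{L:G0inG1}) via the regular value theorem---is exactly the packaging the authors had in mind.
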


\bibliographystyle{amsplain}

\hspace*{1em}\\
\begin{footnotesize}
\hspace*{0.3em}{\sc University of M\"unster,
Einsteinstrasse 62, 48149 M\"unster, Germany}\\
\hspace*{0.3em}{\em E-mail addresses: }{\sf mrade\_02@uni-muenster.de, wilking@uni-muenster.de}
\end{footnotesize}
\end{document}